\let\ms\mathscr
\let\mf\mathfrak
\let\mc\mathcal
\let\ol\overline
\let\wh\widehat
\newcommand{\field}[1]{\mathbf #1}
\newcommand{\widebar}[1]{\overline{#1}}
\newcommand{\cris}{\mathrm{cris}}
\newcommand{\simto}{\stackrel{\sim}{\to}}
\newcommand{\shom}{\ms H\!om}
\newcommand{\m}{\boldsymbol{\mu}}
\newcommand{\spec}{\operatorname{Spec}}
\newcommand{\spf}{\operatorname{Spf}}
\newcommand{\et}{\operatorname{\acute{e}t}}
\newcommand{\Het}{\operatorname{H}_{\et}}
\newcommand{\Hcris}{\operatorname{H}_{\cris}}
\renewcommand{\H}{\operatorname{H}}
\newcommand{\Gal}{\operatorname{Gal}}
\newcommand{\inj}{\hookrightarrow}
\newcommand{\id}{\operatorname{id}}
\newcommand{\tw}{\text{\rm tw}}
\newcommand{\rshom}{\mathbf{R}\shom}
\newcommand{\Hdr}{\operatorname{H}_{\rm dR}}
\newcommand{\R}{\field R}
\newcommand{\C}{\field C}
\newcommand{\Z}{\field Z}
\newcommand{\Q}{\field Q}
\renewcommand{\L}{\field L}
\newcommand{\G}{\field G} 
\DeclareMathOperator{\Pic}{Pic}
\DeclareMathOperator{\pr}{pr}
\DeclareMathOperator{\tr}{tr}
\DeclareMathOperator*{\tensor}{\otimes}
\DeclareMathOperator*{\ltensor}{\stackrel{\field L}{\otimes}}
\DeclareMathOperator{\rk}{\operatorname{rk}}
\DeclareMathOperator{\Hom}{\operatorname{Hom}}
\DeclareMathOperator{\Ext}{\operatorname{Ext}}
\DeclareMathOperator{\Br}{\operatorname{Br}}
\DeclareMathOperator{\NS}{NS}
\DeclareMathOperator{\Chow}{CH}
\DeclareMathOperator{\D}{D}
\DeclareMathOperator{\disc}{disc}
\DeclareMathOperator{\Sh}{\mathbf{Sh}}
\DeclareMathOperator{\mSh}{Sh}
\DeclareMathOperator{\chern}{ch}
\DeclareMathOperator{\Todd}{Td}
\DeclareMathOperator{\Aut}{Aut}
\DeclareMathOperator{\Def}{Def}
\theoremstyle{plain}
\newtheorem{lem}{Lemma}[subsection]
\newtheorem{thm}[lem]{Theorem}
\newtheorem*{mainthm}{Main Theorem}
\newtheorem{prop}[lem]{Proposition}
\newtheorem{cor}[lem]{Corollary}
\newtheorem*{corollary}{Corollary}
\theoremstyle{definition}
\newtheorem{defn}[lem]{Definition}
\theoremstyle{remark}
\newtheorem{remark}[lem]{Remark}
\newtheorem{notation}[lem]{Notation}
\title[Endlichkeit der K3-Fl\"achen und die Tate Vermutung]{Finiteness of K3 surfaces and the Tate conjecture}
\author{Max Lieblich}
\email{lieblich@math.washington.edu}
\author{Davesh Maulik}
\email{dmaulik@math.columbia.edu}
\author{Andrew Snowden}
\email{asnowden@math.mit.edu}
\date{May 21, 2013}
\begin{document}

\begin{abstract}
Given a finite field $k$ of characteristic $p \geq 5$, we show that
the Tate conjecture holds for K3 surfaces over $\ol{k}$ if and only if
 there are only finitely many K3 surfaces defined over each finite extension of $k$.
\end{abstract}

\maketitle
\tableofcontents

\section{Introduction}
\label{S:intro}

Given a class of algebraic varieties, it is reasonable to ask if there are only finitely many members defined over a
given finite field.  While this is clearly the case when the appropriate moduli functor is bounded, matters are often
not so simple.  For example, consider the case of abelian varieties of a given dimension $g$.  There
is no single moduli space parameterizing them; rather, for each integer $d \ge 1$ there is a moduli space
parameterizing abelian varieties of dimension $g$ with a polarization of degree $d$.
It is nevertheless possible to show (see \cite[Theorem 4.1]{zarhin}, \cite[Corollary 13.13]{milne}) that there are only finitely many
abelian varieties over a given finite field, up to isomorphism.
Another natural class of varieties where this difficulty arises is the case of K3 surfaces.
As with abelian varieties, there is not a single moduli space but rather a moduli space for each even integer
$d \geq 2$, parametrizing K3 surfaces with a polarization of degree $d$.

In this paper, we consider the finiteness question for K3 surfaces over finite fields.  Given a K3 surface $X$ defined over a finite field $k$ of characteristic $p$, the Tate conjecture predicts that the natural map
$$\Pic(X) \otimes \Q_{\ell} \rightarrow \Het^2(X_{\ol{k}}, \Q_{\ell}(1))^{\Gal(\ol{k}/k)}$$
is surjective for $\ell \ne p$.
It admits many alternate formulations; for example, it is equivalent to the statement that the
Brauer group of $X$ is finite.  We say that \emph{$X/k$ satisfies the Tate conjecture over some extension $k'/k$
(resp.\ $\ol{k}$)\/} if the Tate conjecture holds for the base change $X_{k'}$ (resp.\ for all base changes $X_{k'}$
with $k'/k$ finite).

Our main result is that this conjecture is essentially equivalent to the finiteness of the set of K3 surfaces over $k$.
Precisely:

\begin{mainthm}
Let $k$ be a finite field of characteristic $p\geq 5$.
\begin{enumerate}
\item
There are only finitely many isomorphism classes of K3 surfaces over $k$ which satisfy the Tate conjecture
over $\ol{k}$.
\item If there are only finitely many isomorphism classes of K3 surfaces over the quadratic extension $k'$ of $k$ then
every K3 surface over $k$ satisfies the Tate conjecture over $k'$.
\end{enumerate}
In particular, if $p \ge 5$, the Tate conjecture holds for all K3 surfaces over $\ol{k}$ if and only if there are only finitely many
K3 surfaces defined over each finite extension of $k$.
\end{mainthm}

As the Tate conjecture is known for K3 surfaces of finite height in characteristic at least 5 \cite{NO}, we obtain
the following unconditional corollary:

\begin{corollary}
If $p \ge 5$ then there are only finitely many isomorphism classes of K3 surfaces of finite height defined over $k$.
\end{corollary}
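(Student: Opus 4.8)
The plan is to deduce this directly from part (1) of the Main Theorem together with the theorem of Nygaard and Ogus cited as \cite{NO}. The only real work is to check that \emph{finite height} is a property stable under base change; this lets us upgrade the Tate conjecture for finite-height surfaces from $k$ itself to all finite extensions of $k$, i.e.\ to $\ol{k}$ in the sense fixed in the Introduction.

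First I would recall that the height of a K3 surface $X/k$ is the height of its Artin--Mazur formal Brauer group $\wh{\Br}(X)$, a one-dimensional formal group; this height lies in $\{1,\dots,10\}\cup\{\infty\}$ and is insensitive to extension of the ground field. In particular $X$ has finite height if and only if $X_{\ol{k}}$ does, so if $X/k$ has finite height then so does $X_{k'}$ for every finite extension $k'/k$. Next, since $\ch k = p \ge 5$, the result of \cite{NO} applies: the Tate conjecture holds for any finite-height K3 surface over a finite field of characteristic $p$. Applying this to each base change $X_{k'}$ --- which is again of finite height by the previous sentence --- shows that a finite-height surface $X/k$ satisfies the Tate conjecture over every finite extension $k'$, hence over $\ol{k}$.

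It then suffices to invoke part (1) of the Main Theorem, which bounds the number of isomorphism classes of K3 surfaces over $k$ satisfying the Tate conjecture over $\ol{k}$: the finite-height surfaces form a subset of this finite set and are therefore finite in number. I expect essentially no obstacle beyond the bookkeeping above, since the two substantive inputs --- the finiteness criterion of the Main Theorem and the Tate conjecture in finite height --- are both available as black boxes. The one point genuinely requiring care is that the height invariant governing the applicability of \cite{NO} is unchanged under passage to finite extensions, which is what permits the transition from the Tate conjecture over $k$ to the Tate conjecture over $\ol{k}$.
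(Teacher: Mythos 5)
Your proposal is correct and follows the paper's own route exactly: the paper derives this corollary by combining part (1) of the Main Theorem with the Nygaard--Ogus theorem \cite{NO}, just as you do. Your explicit verification that the height of the Artin--Mazur formal Brauer group is insensitive to finite extension of the base field --- so that \cite{NO} applies over every finite extension and the hypothesis ``Tate over $\ol{k}$'' of Main Theorem (1) is met --- is a detail the paper leaves implicit, and it is handled correctly.
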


Our argument proceeds as follows.
To obtain finiteness from Tate, it suffices to prove the existence of low-degree polarizations on K3 surfaces over $k$.  In order to do this, we use the Tate conjecture in both $\ell$-adic and crystalline cohomology to control the possibilities of the N\'eron-Severi lattice.  For the other direction, we use the finiteness statement and the existence of infinitely many Brauer classes to create a K3 surface with infinitely many twisted Fourier-Mukai partners.  Since this cannot happen in characteristic zero, we obtain a contradiction by proving a lifting result.
This argument does not rely on \cite{NO} (as it did in an earlier version of this paper).


\removelastskip\vskip.5\baselineskip\par\noindent{\bf Notation.}
Throughout, $k$ denotes a finite field of characteristic $p$ and cardinality $q=p^f$.  We fix an algebraic closure
$\ol{k}$ of $k$.

\removelastskip\vskip.5\baselineskip\par\noindent{\bf Acknowledgments.}
We would like to thank Jean-Louis Colliot-Th\'el\`ene, Daniel Huybrechts, Abhinav Kumar, Keerthi Madapusi Pera, Matthias Sch\"utt, Damiano Testa, Yuri Zarhin, and the referees for many helpful comments and discussions.  M.L.\
is partially supported by NSF grant DMS-1021444, NSF CAREER grant DMS-1056129, and the Sloan Foundation.  D.M.\ is partially supported by a
Clay Research Fellowship.

\section{Tate implies finiteness}
\label{sec2}

\subsection{Discriminant bounds for the \'etale and crystalline lattices}

In this section, we produce bounds on the discriminants of certain lattices constructed from the \'etale and
crystalline cohomologies of K3 surfaces over $k$.  We begin by recalling some terminology.
Let $A$ be a principal ideal domain.  By a \emph{lattice} over $A$, we mean a finite free $A$ module $M$ together with
a symmetric $A$-linear form $(,):M \otimes_A M \to A$.  We say that $M$ is \emph{non-degenerate} (resp.\
\emph{unimodular}) if the map $M \to \Hom_A(M, A)$ provided by the pairing is injective (resp.\ bijective).
The \emph{discriminant} of a lattice $M$, denote $\disc(M)$, is the determinant of the matrix $(e_i, e_j)$, where
$\{e_i\}$ is a basis for $M$ as an $A$-module; it is a well-defined element of $A/(A^{\times})^2$.  The lattice $M$ is
non-degenerate (resp.\ unimodular) if and only if its discriminant is non-zero (resp.\ a unit).  Note that the valuation
of $\disc(M)$ at a maximal ideal of $A$ is well-defined.

We will need a simple lemma concerning discriminants:

\begin{lem}
\label{disc-lem}
Let $A$ be a discrete valuation ring with uniformizer $t$.  Let $M$ be a lattice over $A$ and let $M' \subset M$ be an
$A$-submodule such that $M/M'$ has length $r$ as an $A$-module.  Regard $M'$ as a lattice by restricting the form from
$M$.  Then $\disc(M')=t^{2r} \disc(M)$ up to units of $A$.
\end{lem}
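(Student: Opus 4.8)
The plan is to reduce the statement to a simultaneous diagonalization of the inclusion $M' \subset M$ via the elementary divisor theorem, after which the claim becomes a one-line determinant computation. The key observation is that both discriminants are computed from Gram matrices, and an adapted choice of bases relates these matrices by a diagonal congruence whose determinant is visibly a power of $t$.

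First I would invoke the structure theorem for finitely generated modules over the principal ideal domain $A$ (equivalently, Smith normal form). Since $M/M'$ is a torsion module of finite length, $M'$ is free of the same rank $n$ as $M$, and there exist a basis $e_1, \dots, e_n$ of $M$ together with integers $0 \le a_1 \le \cdots \le a_n$ such that $t^{a_1} e_1, \dots, t^{a_n} e_n$ is a basis of $M'$. The point of this reduction is that the pairing then behaves transparently: there is no need to track how an arbitrary basis of $M'$ sits inside $M$.

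Next I would read off the length of the quotient from these adapted bases. We have $M/M' \cong \bigoplus_{i=1}^n A/(t^{a_i})$, whose length as an $A$-module is $\sum_{i=1}^n a_i$, so the hypothesis gives $r = \sum_{i=1}^n a_i$. Finally, setting $f_i = t^{a_i} e_i$, the Gram matrix of $M'$ in the basis $\{f_i\}$ satisfies $(f_i, f_j) = t^{a_i + a_j}(e_i, e_j)$, that is $G' = D G D$ with $G = \bigl((e_i,e_j)\bigr)$ and $D = \operatorname{diag}(t^{a_1}, \dots, t^{a_n})$. Taking determinants yields $\disc(M') = \det(G') = \det(D)^2 \det(G) = t^{2\sum a_i}\,\disc(M) = t^{2r}\,\disc(M)$, and the ambiguity by squares of units in $A$ exactly matches the asserted equality up to units.

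I expect no substantive obstacle here: the only genuine input is the existence of the adapted bases, i.e.\ the elementary divisor theorem over the discrete valuation ring $A$, and everything else is bookkeeping. If one wanted to avoid quoting Smith normal form, one could instead argue by dévissage, factoring $M' \subset M$ through a chain of length-one steps $M' = M_r \subset \cdots \subset M_0 = M$ and checking the multiplicative factor $t^2$ at each step; but the adapted-basis approach is cleaner and makes the power $t^{2r}$ manifest.
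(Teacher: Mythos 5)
Your proof is correct and is essentially the paper's argument: the paper takes arbitrary bases of $M$ and $M'$, writes the Gram matrices as $B' = C^t B C$, and notes that $\det(C) = t^r$ up to units, which is exactly the fact your Smith-normal-form step makes explicit via the diagonal matrix $D = \operatorname{diag}(t^{a_1},\dots,t^{a_n})$ with $\sum a_i = r$. Your version just builds the adapted bases into the setup rather than quoting the determinant-versus-length fact for a general change-of-basis matrix; the content is the same.
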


\begin{proof}
Let $e_1, \ldots, e_n$ be a basis for $M$ and let $f_1, \ldots, f_n$ be a basis for $M'$.  Let $B$ be the matrix
$(e_i, e_j)$ and let $B'$ be the matrix $(f_i, f_j)$.  Thus $\disc(M)=\det{B}$ and $\disc(M')=\det{B'}$.  Let
$C \in M_n(A)$ be the change of basis matrix, so that $f_i=C e_i$.  Then $\det(C)=t^r$ up to units of $A$.
As $B'=C^tBC$, the result follows.
\end{proof}

The following general result on discriminant bounds will be used several times in what follows.

\begin{prop}
\label{discrim}
Fix a positive integer $r$ and a non-negative even integer $w$.  There exist constants $C$ and $C'$, depending only on $r$, $w$ and $q$, with the following property.

Let $E$ be a finite unramified extension of $\Q_{\ell}$ with ring of integers $\mc{O}$.  Let $M$ be a lattice over $\mc{O}$ of rank $r$ equipped with an endomorphism $\phi$.  Let $v_0$ be the $\ell$-adic valuation of $\disc(M)$.  Assume that the characteristic polynomial of $\phi$ belongs to $\Z[T]$, that all eigenvalues of $\phi$ on $M[1/\ell]$ are Weil $q$-integers of weight $w$ and that $q^{w/2}$ is a semi-simple eigenvalue of $\phi$ on $M[1/\ell]$.
\begin{enumerate}
\item If $\ell>C$ then the discriminant $M^{\phi=q^{w/2}}$ has $\ell$-adic valuation at most $v_0$.
\item The discriminant of $M^{\phi=q^{w/2}}$ has $\ell$-adic valuation at most $C'+v_0$.
\end{enumerate}
\end{prop}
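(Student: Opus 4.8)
The plan is to cut out the eigenlattice $N := M^{\phi=q^{w/2}}$ as an orthogonal summand over $E$ and then measure how far it is from being an orthogonal summand over $\mc O$. Set $\alpha = q^{w/2}$, a rational integer since $w$ is even and $q=p^f$, and factor the characteristic polynomial as $P(T)=(T-\alpha)^a Q(T)$ in $\Z[T]$ with $Q(\alpha)\neq 0$. Semisimplicity of $\alpha$ means the eigenspace $V_\alpha=\ker(\phi-\alpha)\subseteq V:=M[1/\ell]$ has dimension $a$, so $V=V_\alpha\oplus W$ with $W=\ker Q(\phi)=(\phi-\alpha)V$ the complementary $\phi$-stable subspace. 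The first step is to prove $V_\alpha\perp W$. Here I use the one structural input the statement really needs, namely that $\phi$ scales the form by $q^w$, i.e. $(\phi x,\phi y)=q^w(x,y)$ (the defining property of the Frobenius-type operators to which the proposition is applied, and without which the conclusion is false): for $x\in V_\alpha$ and any $y$ one computes $q^w(x,y)=(\phi x,\phi y)=\alpha(x,\phi y)$, whence $(x,\phi y)=q^{w/2}(x,y)=(x,\alpha y)$ and so $(x,(\phi-\alpha)y)=0$; since $W=(\phi-\alpha)V$ this gives $V_\alpha\perp W$. Writing $N=M\cap V_\alpha$ and $N'=M\cap W$, both are saturated and non-degenerate, and the form is block-diagonal on $N\oplus N'$.

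Next I reduce everything to bounding the glue index. As $N\oplus N'\subseteq M$ is a full-rank sublattice, Lemma~\ref{disc-lem} together with orthogonality gives
$v_\ell(\disc N)+v_\ell(\disc N')=v_0+2\delta$, where $\delta=\operatorname{length}_{\mc O}\big(M/(N\oplus N')\big)$. Since $v_\ell(\disc N')\ge 0$, it suffices to bound $\delta$: this yields $v_\ell(\disc N)\le v_0+2\delta$, and for part (1) I will show $\delta=0$ once $\ell$ is large.

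To control $\delta$ I use the idempotent $\pi=V(\phi)Q(\phi)$ projecting onto $V_\alpha$ along $W$, where $U(T)(T-\alpha)^a+V(T)Q(T)=1$ with $Q(\alpha)^a U,\,Q(\alpha)^a V\in\Z[T]$ (by the resultant form of Bézout's identity, as $\operatorname{Res}((T-\alpha)^a,Q)=Q(\alpha)^a$). Then $Q(\alpha)^a\pi$ and $Q(\alpha)^a(1-\pi)$ lie in $\Z[\phi]\subseteq\operatorname{End}_{\mc O}(M)$, so $Q(\alpha)^a M\subseteq N\oplus N'$; via the surjection $M/Q(\alpha)^aM\to M/(N\oplus N')$ this gives $\delta\le\operatorname{length}(M/Q(\alpha)^aM)=ra\,v_\ell(Q(\alpha))\le r^2 v_\ell(Q(\alpha))$. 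Finally, since every root $\beta$ of $Q$ is a Weil $q$-integer of weight $w$ and $\alpha=q^{w/2}$, we have $|Q(\alpha)|=\prod_\beta|\alpha-\beta|\le (2q^{w/2})^{r}=:B$, a bound depending only on $r,w,q$, while $Q(\alpha)$ is a nonzero integer. For (1), taking $C:=B$, any $\ell>C$ fails to divide $Q(\alpha)$, so $Q(\alpha)\in\mc O^\times$, $\pi\in\operatorname{End}_{\mc O}(M)$, $M=N\oplus N'$, $\delta=0$, and $v_\ell(\disc N)\le v_0$. For (2), $\ell^{v_\ell(Q(\alpha))}\le|Q(\alpha)|\le B$ and $\ell\ge 2$ give $v_\ell(Q(\alpha))\le\log_2 B$, hence $\delta\le r^2\log_2 B$ and $v_\ell(\disc N)\le v_0+C'$ with $C':=2r^2\log_2 B$.

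I expect the main obstacle to be the orthogonal splitting of the first step: it is exactly the place where the scaling property of $\phi$ must be invoked, and a simple diagonal example shows the bound genuinely fails without it. Once orthogonality is in hand, the only substantive work is the projector-and-resultant estimate bounding the glue index $\delta$ by $v_\ell(Q(\alpha))$, combined with the elementary weight bound $|Q(\alpha)|\le(2q^{w/2})^r$, which is what makes $C$ and $C'$ independent of $\phi$, of $E$, and of $\ell$.
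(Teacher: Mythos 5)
Your proof is correct, and its skeleton matches the paper's: factor the characteristic polynomial as $g(T)h(T)$ with $g$ a power of $T-q^{w/2}$, use a B\'ezout identity $ag+bh=1$ to split $M$ up to finite index into the $q^{w/2}$-part and an orthogonal complement, and convert the index of the splitting into a discriminant bound via Lemma~\ref{disc-lem}. But you differ in two substantive ways, both to your credit. First, your constants are effective: the paper enumerates the finite set $S$ of monic integer polynomials of degree $r$ with roots Weil $q$-integers of weight $w$, picks rational B\'ezout coefficients for each element of $S$, and extracts $C$ and $C'$ from the least common multiple of all the denominators---a bound that exists but is never computed---whereas you use the resultant form of B\'ezout (the denominator divides $\operatorname{Res}(g,Q)=Q(q^{w/2})^a$, a nonzero integer) together with the archimedean weight bound $|q^{w/2}-\beta|\le 2q^{w/2}$ to get the explicit values $C=(2q^{w/2})^r$ and $C'=2r^2\log_2 C$. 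Second, you correctly flag that the orthogonality of the two pieces---which the paper dispatches with ``one easily sees \dots\ $M_1$ and $M_2$ are orthogonal''---does \emph{not} follow from the hypotheses as stated: it needs the compatibility $(\phi x,\phi y)=q^w(x,y)$, which holds in every application in the paper (e.g.\ $(\phi x,\phi y)=q^2(x,y)$ on $M_\ell(X)$, and Frobenius acting isometrically on $\NS(X_{\ol k})$ with $w=0$) but is omitted from the proposition. Your claim that the statement genuinely fails without it is right: take $M=\Z_\ell^2$ hyperbolic with $\phi=\mathrm{diag}(q^{w/2},-q^{w/2})$; the characteristic polynomial $T^2-q^w$ lies in $\Z[T]$, the eigenvalues are Weil $q$-integers of weight $w$, and $q^{w/2}$ is semi-simple, yet $M^{\phi=q^{w/2}}$ is an isotropic line, so its discriminant vanishes and both conclusions fail. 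The remaining differences are bookkeeping: the paper works with the images $M_1=h(\phi)M\subseteq N$ and $M_2=g(\phi)M$, while you work with the saturations $M\cap V_\alpha$, $M\cap W$ and the projector $\pi$; these give the same estimates, and all your intermediate steps (the identity $v_\ell(\disc N)+v_\ell(\disc N')=v_0+2\delta$, the inclusion $Q(q^{w/2})^aM\subseteq N\oplus N'$, and the identification $N=M_1$'s analogue in the large-$\ell$ case via integrality of $\pi$) check out.
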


\begin{proof}
We first define the constants $C$ and $C'$.  Let $\mc{W}$ be the set of all Weil $q$-integers of weight $w$ and degree at most $r$.  It is easy to bound the coefficients
of the minimal polynomial of an element of $\mc{W}$, and so one sees that $\mc{W}$ is a finite set.  Let $S$ denote
the set of elements of $\Z[T]$ which are monic of degree $r$ and whose roots belong to $\mc{W}$.  Clearly, $S$ is
a finite set; enumerate its elements as $f_1(T), \ldots, f_m(T)$.  We can factor each $f_i(T)$ as $g_i(T) h_i(T)$,
where $g_i(T)$ is a power of $T-q^{w/2}$ and $h_i(T)$ is an element of $\Z[T]$ which does not have $q^{w/2}$ as a root.  For each $i$, pick rational polynomials $a_i(T)$ and $b_i(T)$ such that
\begin{displaymath}
a_i(T) g_i(T)+b_i(T) h_i(T)=1.
\end{displaymath}
Let $Q$ be the least common multiple of the denominators of the coefficients of $a_i(T)$ and $b_i(T)$.  Let $s$
be the maximal integer such that $\ell^s$ divides $Q$, for some prime $\ell$, and let $\ell_0$ be the largest prime
dividing $Q$.  We claim that we can take $C=\ell_0$ and $C'=2rs$.

We now prove these claims.  Thus let $M$ and $\phi$ be given, and put $N=M^{\phi=q^{w/2}}$.  The characteristic polynomial of $\phi$ belongs to $S$, and is thus equal to $f_i(T)$ for some $i$.  Put
$M_1=h_i(\phi) M$ and $M_2=g_i(\phi) M$.  One easily sees that $M_1 \oplus M_2$ is a finite index
$\mc{O}$-submodule of $M$ and that $M_1$ and $M_2$ are orthogonal.  Furthermore, $M_1$ is contained in $N$, since
$q^{w/2}$ is a semi-simple eigenvalue of $\phi$, and has finite index.

Suppose that $\ell>C$.  Then $a_i(T)$ and $b_i(T)$ belong to $\mc{O}[T]$ and so $M=M_1 \oplus M_2$.  Thus
$\disc(M)=\disc(M_1) \disc(M_2)$.  It follows that $\disc(M_1)$ has $\ell$-adic valuation at most $v_0$. As $N$ and $M_1$ are saturated in $M$ and $M_1 \subset N$, we have $N=M_1$, and so (1) follows.

Now suppose that $\ell$ is arbitrary.  Then $\ell^s a_i(T)$ and $\ell^s b_i(T)$ belong to $\mc{O}[T]$.
It follows that $M_1 \oplus M_2$ contains $\ell^s M$, and so $M/(M_1 \oplus M_2)$ has length at most $rs$ as an $\mc{O}$-module.  Lemma~\ref{disc-lem} shows that $\disc(M_1) \disc(M_2)$ divides $\ell^{2rs} \disc(M)$, and thus has $\ell$-adic valuation as most $2rs+v_0=C'+v_0$.  The lemma also shows that $\disc(N)$ divides $\disc(M_1)$, which proves (2).
\end{proof}

Let $X$ be a K3 surface over $k$.  For a prime number $\ell \ne p$ put
\begin{displaymath}
M_{\ell}(X)=\Het^2(X_{\ol{k}}, \Z_{\ell}), \qquad N_{\ell}(X)=M_{\ell}(X)^{\phi=q}.
\end{displaymath}
Then $M_{\ell}(X)$ is a free $\Z_{\ell}$-module of rank 22, and the cup product gives it the structure of a unimodular
lattice.  The space $M_{\ell}(X)$ admits a natural $\Z_{\ell}$-linear automorphism $\phi$, the geometric Frobenius
element of $\Gal(\ol{k}/k)$.  The map $\phi$ does not quite preserve the form, but satisfies
$(\phi x, \phi y)=q^2 (x, y)$.  It is known \cite{D} that the action of $\phi$ on $M_{\ell}(X)$ is semi-simple.
We give $N_{\ell}(X)$ the structure of a lattice by restricting the form from $M_{\ell}(X)$.

\begin{prop}
\label{step1a}
There exist constants $C_1=C_1(k)$ and $C_2=C_2(k)$ with the following properties.  Let $X$ be a K3 surface
over $k$ and let $\ell \ne p$ be a prime number.  Then
\begin{enumerate}
\item For $\ell>C_1$, the discriminant of $N_{\ell}(X)$ has $\ell$-adic valuation zero.
\item The discriminant of $N_{\ell}(X)$ has $\ell$-adic valuation at most $C_2$.
\end{enumerate}
\end{prop}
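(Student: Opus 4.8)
The plan is to deduce Proposition~\ref{step1a} as a direct application of Proposition~\ref{discrim} to the lattice $M=M_{\ell}(X)$ equipped with the geometric Frobenius $\phi$, taking $r=22$ and $w=2$. With these choices $q^{w/2}=q$, so that $M^{\phi=q^{w/2}}=M_{\ell}(X)^{\phi=q}=N_{\ell}(X)$, and the two conclusions of Proposition~\ref{discrim} translate verbatim into the two assertions of Proposition~\ref{step1a}. Since the constants $C$ and $C'$ produced by Proposition~\ref{discrim} depend only on $r$, $w$ and $q$, the resulting constants $C_1=C$ and $C_2=C'$ depend only on $k$, as required.

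The work is therefore to check that all the hypotheses of Proposition~\ref{discrim} are satisfied. First, $M_{\ell}(X)=\Het^2(X_{\ol{k}},\Z_{\ell})$ is free of rank $22$ over $\Z_{\ell}$, which is the ring of integers of the (trivially) unramified extension $E=\Q_{\ell}$ of $\Q_{\ell}$; thus $r=22$. Next, the characteristic polynomial of $\phi$ on $M_{\ell}(X)$ has integer coefficients and is independent of $\ell$, and by the Riemann hypothesis part of the Weil conjectures its roots are Weil $q$-integers whose archimedean absolute values all equal $q=q^{w/2}$, i.e.\ Weil $q$-integers of weight $w=2$. Moreover $\phi$ acts semisimply on $M_{\ell}(X)$ by \cite{D}, so in particular $q^{w/2}=q$ is a semisimple eigenvalue. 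Finally, the orthogonal splitting used in the proof of Proposition~\ref{discrim} relies on eigenspaces for eigenvalues $\alpha,\beta$ with $\alpha\beta\ne q^w$ being orthogonal; this is guaranteed here by the relation $(\phi x,\phi y)=q^2(x,y)=q^w(x,y)$ recorded above for the cup product form.

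The last ingredient is the value $v_0$ of the $\ell$-adic valuation of $\disc(M_{\ell}(X))$. Because the cup product makes $M_{\ell}(X)$ a unimodular lattice (Poincar\'e duality), its discriminant is a unit and $v_0=0$. Part~(2) of Proposition~\ref{discrim} then gives that $\disc(N_{\ell}(X))$ has $\ell$-adic valuation at most $C'+v_0=C'$, which is assertion~(2) with $C_2=C'$. For part~(1), Proposition~\ref{discrim} gives that for $\ell>C$ the valuation of $\disc(N_{\ell}(X))$ is at most $v_0=0$; since valuations are non-negative this forces the valuation to be exactly $0$, which is assertion~(1) with $C_1=C$.

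As for the main obstacle: there is no serious computational difficulty, since Proposition~\ref{discrim} already encapsulates the lattice-theoretic argument. The substance lies entirely in invoking the correct deep inputs about Frobenius --- integrality and $\ell$-independence of the characteristic polynomial, the weight-$2$ bound on the eigenvalues, and Deligne's semisimplicity theorem --- and in checking the compatibility $(\phi x,\phi y)=q^2(x,y)$ that makes the orthogonal decomposition in Proposition~\ref{discrim} legitimate. The one point requiring a moment's care is the passage in part~(1) from the bound ``valuation $\le v_0$'' to the sharp statement ``valuation $=0$,'' which uses unimodularity of $M_{\ell}(X)$ together with non-negativity of the $\ell$-adic valuation.
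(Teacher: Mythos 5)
Your proposal is correct and is exactly the paper's argument: the paper's proof is a one-line application of Proposition~\ref{discrim} with $r=22$, $w=2$, $M=M_{\ell}(X)$, and $v_0=0$ by unimodularity, with the hypothesis checks (rank $22$, integrality and weight of the Frobenius eigenvalues, Deligne's semisimplicity, and the compatibility $(\phi x,\phi y)=q^2(x,y)$) recorded in the paragraph preceding the proposition. Your more explicit verification of these hypotheses, and the remark that nonnegativity of the valuation upgrades ``$\le v_0=0$'' to ``$=0$'' in part~(1), simply spell out what the paper leaves implicit.
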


\begin{proof}
This follows immediately from Proposition~\ref{discrim} with $r=22$ and $w=2$, applied to $M=M_{\ell}(X)$.  Note that $v_0=0$.
\end{proof}

We also need a version of the above result at $p$.  Let $W=W(k)$ be the Witt ring of $k$.  Put
\begin{displaymath}
M_p(X)=\Hcris^2(X/W), \qquad N_p(X)=M_p(X)^{\phi_0=p}.
\end{displaymath}
Then $M_p(X)$ is a free $W$-module of rank 22, and the cup product gives it the structure of a unimodular
lattice.  The lattice $M_p(X)$ admits a natural semilinear automorphism $\phi_0$, the crystalline Frobenius.
The map $\phi=\phi_0^f$ is $W$-linear (where $q=p^f$).  We have $(\phi_0 x, \phi_0 y)=p^2 \phi_0((x, y))$.
(Note:  the $\phi_0$ on the right is the Frobenius on $W$.)
Since $\phi_0$ is only semi-linear, $N_p(X)$ is not a $W$-module, but a $\Z_p$-module.  We give $N_p(X)$ the
structure of a lattice via the form on $M_p(X)$.

We say that an eigenvalue $\alpha$ of a linear map is semi-simple if the $\alpha$-eigenspace coincides with the
$\alpha$-generalized eigenspace.  We now come to the main result at $p$:

\begin{prop}
\label{step1b}
There exists a constant $C_3=C_3(k)$ with the following property.  Let $X$ be a K3 surface over $k$.  Assume that
$q$ is a semi-simple eigenvalue of $\phi$ on $M_p(X)[1/p]$.  Then the discriminant of $N_p(X)$ has $p$-adic
valuation at most $C_3$.
\end{prop}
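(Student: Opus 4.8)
The plan is to deduce this from Proposition~\ref{discrim}, applied over $\mc O = W$ at the prime $\ell = p$, and then to pass from the $W$-linear eigenspace of $\phi$ to the $\Z_p$-lattice $N_p(X)$ using the semilinear structure of $\phi_0$. First I would apply Proposition~\ref{discrim} to the lattice $M = M_p(X)$ over the unramified ring $W$, equipped with the $W$-linear endomorphism $\phi = \phi_0^f$, taking $r = 22$ and $w = 2$. Its characteristic polynomial lies in $\Z[T]$ and its eigenvalues are Weil $q$-integers of weight $2$ (the crystalline analogue of the input \cite{D} used for $M_\ell$, together with independence of the characteristic polynomial from the cohomology theory), and by hypothesis $q = q^{w/2}$ is a semi-simple eigenvalue. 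As $M_p(X)$ is unimodular we have $v_0 = 0$, so part (2) of Proposition~\ref{discrim} bounds the $p$-adic valuation of $\disc(L)$ by a constant $C' = C'(k)$, where $L := M_p(X)^{\phi = q}$ is viewed as a lattice over $W$.

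Next I would relate $N_p(X)$ to $L$. Because $\phi_0$ commutes with $\phi = \phi_0^f$ and fixes $q \in \Z_p$, it preserves $L$; and since $q$ is a semi-simple eigenvalue, $\phi$ acts on $L$ as the scalar $q = p^f$, so $\phi_0^f = p^f$ identically on $L$. Setting $\psi := p^{-1}\phi_0$, this gives a $\sigma$-semilinear endomorphism of $L[1/p]$ with $\psi^f = 1$ (here $\sigma$ is the Frobenius of $W$), and clearly $N_p(X) = L^{\phi_0 = p} = L^{\psi = 1}$. The relation $(\phi_0 x, \phi_0 y) = p^2\sigma((x,y))$ shows that on any $\psi$-fixed vectors the form takes values in $\Q_p$, so that the Gram matrix of a $\Z_p$-basis of a $\psi$-fixed lattice computes both the $\Z_p$-discriminant of that lattice and the $W$-discriminant of its $W$-span.

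The technical heart, which I expect to be the main obstacle, is that $\psi$ need not preserve $L$ itself, so the unit-root formalism cannot be applied to $L$ directly; one must first enlarge $L$ to a $\psi$-stable lattice while keeping the enlargement uniformly bounded. I would take $\tilde L := \sum_{i=0}^{f-1}\psi^i(L)$; since $\psi^f = 1$ one checks that $\psi(\tilde L) = \tilde L$, and since $\phi_0(L)\subseteq L$ one has $\tilde L \subseteq p^{-(f-1)}L$, so the length of $\tilde L/L$ is at most $22(f-1)$. Now $(\tilde L, \psi)$ is a unit-root $F$-crystal, and by the standard equivalence between such objects and $\Z_p$-lattices the module $\tilde N := \tilde L^{\psi = 1}$ is free over $\Z_p$ with $\tilde N \otimes_{\Z_p} W \cong \tilde L$; hence, by the previous paragraph, $v_p(\disc(\tilde N)) = v_p(\disc(\tilde L))$, and since $\tilde L \supseteq L$ this is at most $v_p(\disc(L)) \le C'$. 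Finally $N_p(X) = L^{\psi=1} = \tilde N \cap L$ has index in $\tilde N$ bounded by the length of $\tilde L/L$, so Lemma~\ref{disc-lem} gives $v_p(\disc(N_p(X))) \le 2\cdot 22(f-1) + C'$, and I would set $C_3 = 44(f-1) + C'$. The delicate points are the verification that the semisimplicity hypothesis forces $\phi_0^f = p^f$ on the nose (without it $\psi^f = 1$ fails and the unit-root descent breaks down), and the invocation of the unit-root (semilinear Hilbert~90/Lang-type) descent that converts the $\sigma$-semilinear $\psi$ into honest $\Z_p$-structure.
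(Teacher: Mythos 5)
Your argument is correct, and its first half is identical to the paper's: both apply Proposition~\ref{discrim} with $r=22$, $w=2$, $v_0=0$ over $\mc{O}=W$ to bound the $p$-adic valuation of $\disc_W(L)$, where $L=M_p(X)^{\phi=q}$ (the paper calls it $N'$). Where you genuinely diverge is the descent from the $W$-lattice $L$ to the $\Z_p$-lattice $N_p(X)=L^{\phi_0=p}$. The paper restricts scalars: it equips $L$ with the trace form $\langle x,y\rangle=\tr_{W/\Z_p}(x,y)$, regards $\phi_0$ as a $\Z_p$-\emph{linear} operator on a rank-$22f$ lattice with minimal polynomial dividing $T^f-q$, and splits off the $p$-eigenspace via a B\'ezout identity $a(T)(T-p)+b(T)(T^f-q)/(T-p)=1$, the denominators of $a,b$ supplying the uniform constant $C_4$; Lemma~\ref{disc-lem} then controls discriminants. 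You instead twist by $\psi=p^{-1}\phi_0$, enlarge $L$ to the $\psi$-stable lattice $\tilde L=\sum_{i=0}^{f-1}\psi^i(L)$ with $W$-length of $\tilde L/L$ at most $22(f-1)$, and descend semilinearly. Your approach is conceptually cleaner (it avoids the trace-form bookkeeping, including the paper's scaling of the form by $f$) and makes the role of the relation $\psi^f=1$ transparent; the paper's route is more elementary, using only polynomial algebra and no descent. One point of precision in your favor: over the finite field $k$ the general unit-root equivalence yields a Galois \emph{representation}, and $\tilde L^{\psi=1}$ has full rank only because $\psi^f=1$ holds on the nose, turning $\psi$ into an honest semilinear action of the cyclic group $\Gal(W[1/p]/\Q_p)$, so classical Galois descent (Hilbert~90 for $\GL_n$ of an unramified extension) applies --- you flagged exactly this. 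Note, though, that $\phi=q$ on $L$ is true by the very definition of $L$ as the eigenspace, not a consequence of semisimplicity; the semisimplicity hypothesis is consumed inside Proposition~\ref{discrim} (to guarantee $h_i(\phi)M$ lands in the eigenspace with finite index), so the ``delicate point'' you worried about is automatic.

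Two harmless slips to fix. First, the embedding $\tilde N/(\tilde N\cap L)\inj\tilde L/L$ is one of $\Z_p$-modules, and each $W$-length unit counts $f$ times as $\Z_p$-length, so the index exponent is $22f(f-1)$ and the constant should be $C_3=44f(f-1)+C'$, not $44(f-1)+C'$ --- immaterial, since $C_3$ need only depend on $k$. Second, the form on $\tilde L$ is only $p^{-2(f-1)}W$-valued, so strictly speaking $\tilde N$ and $\tilde L$ are not lattices in the paper's sense and Lemma~\ref{disc-lem} is invoked in its evident fractional-form extension; the change-of-basis determinant proof goes through verbatim, but you should say so.
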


\begin{proof}
Let $X$ be given, and put $N'=M_p(X)^{\phi=q}$, so that $N_p(X)=(N')^{\phi_0=p}$.  Proposition~\ref{discrim}, with $r=22$ and $w=2$, bounds the $p$-adic valuation of $\disc(N')$ (as a lattice over $W$) in terms of $k$; in fact, the produced bound is the number $C_2$ from the previous proposition.  The following lemma (which defines
a constant $C_4$) now shows that the $p$-adic valuation of  $\disc(N_p(X))$ is bounded by $C_3=C_2f+44 C_4f$.
\end{proof}

\begin{lem}
There exists a constant $C_4=C_4(k)$ with the following property.  Let $N'$ be a lattice over $W$ of rank $n$.
Let $\phi_0$ be a semi-linear endomorphism of $N'$ satisfying $\phi_0^f=q$ and $(\phi_0 x, \phi_0 y)
=p^2 \phi_0((x,y))$, and put $N=(N')^{\phi_0=p}$.  Then $v_p(\disc(N))$ is at most $f v_p(\disc(N'))+2C_4 nf$.
\end{lem}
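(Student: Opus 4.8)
The plan is to reinterpret $N'$ as a lattice in an isocrystal of slope $1$, realize $N$ through Galois descent, and then convert a lattice-index estimate into the desired discriminant estimate via Lemma~\ref{disc-lem}. Write $K=W[1/p]$ and let $\sigma$ denote the Frobenius of $K$ over $\Q_p$ (the map called ``$\phi_0$ on $W$'' in the hypotheses), so that $\langle\sigma\rangle=\Gal(K/\Q_p)$ is cyclic of order $f$. On $V=N'[1/p]$ I would set $\psi=p^{-1}\phi_0$; this is $\sigma$-semilinear, and the hypothesis $\phi_0^f=q=p^f$ gives $\psi^f=\id$, so $\psi$ is a semilinear action of $\Gal(K/\Q_p)$ on $V$. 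By Galois descent the natural map $V^\psi\otimes_{\Q_p}K\to V$ is an isomorphism, where $V^\psi=\{x:\phi_0x=px\}$ has $\Q_p$-dimension $n$. The relation $(\phi_0x,\phi_0y)=p^2\sigma((x,y))$ forces $(x,y)\in\Q_p$ whenever $x,y\in V^\psi$, so $N=(N')^{\phi_0=p}=N'\cap V^\psi$ is a $\Z_p$-lattice with a $\Z_p$-valued form. Setting $L=N\otimes_{\Z_p}W$, the inclusion $L\hookrightarrow N'$ is a finite-index map of $W$-lattices and $\disc(L)=\disc(N)$ (the Gram matrices agree), so Lemma~\ref{disc-lem} yields $v_p(\disc(N))=v_p(\disc(N'))+2\operatorname{length}_W(N'/L)$. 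Everything thus reduces to bounding $\operatorname{length}_W(N'/L)$.

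To bound the index I would manufacture a $\psi$-stable lattice near $N'$. Since $\phi_0(N')\subseteq N'$ we have $\psi^i(N')\subseteq p^{-i}N'$, and $\psi^f=\id$, so $\Lambda=\sum_{i=0}^{f-1}\psi^i(N')$ satisfies $\psi(\Lambda)=\Lambda$ and $N'\subseteq\Lambda\subseteq p^{-(f-1)}N'$; hence $\operatorname{length}_W(\Lambda/N')\le(f-1)n$. As $\Lambda$ is $\psi$-stable with $\psi^f=\id$, Hilbert~90 for $\GL_n$ over the finite unramified extension $W/\Z_p$ (equivalently, descent along the finite \'etale cover $\spec W\to\spec\Z_p$) shows $\Lambda=N_0\otimes_{\Z_p}W$ with $N_0=\Lambda\cap V^\psi$. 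Now $N\subseteq N_0$, and the inclusion $N'\subseteq\Lambda$ induces an injection $N_0/N\hookrightarrow\Lambda/N'$, so $\operatorname{length}_{\Z_p}(N_0/N)\le\operatorname{length}_{\Z_p}(\Lambda/N')=f\operatorname{length}_W(\Lambda/N')\le f(f-1)n$. Finally $L\subseteq N'\subseteq\Lambda$ gives $\operatorname{length}_W(N'/L)\le\operatorname{length}_W(\Lambda/L)=\operatorname{length}_{\Z_p}(N_0/N)\le f(f-1)n$, whence $v_p(\disc(N))\le v_p(\disc(N'))+2f(f-1)n\le f\,v_p(\disc(N'))+2(f-1)nf$. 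Taking $C_4=f-1$ then finishes the proof.

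I expect the descent step to be the heart of the matter. The subtlety is that $N'$ itself need not be $\psi$-stable, so it need not descend to a $\Z_p$-lattice; the remedy is to trap it between $N'$ and $p^{-(f-1)}N'$ inside a genuinely $\psi$-stable lattice $\Lambda$, which does descend, and then to compare $N$ with the descended lattice $N_0$. Producing $\Lambda$ and controlling its index uses only $\phi_0^f=p^f$, while the descent itself is ordinary Galois descent for finite free modules. The surrounding bookkeeping---verifying that the form is $\Q_p$-valued on $V^\psi$, the identity $\disc(N)=\disc(L)$, and the factor-of-$f$ conversions between $W$-length and $\Z_p$-length---is routine.
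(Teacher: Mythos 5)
Your proof is correct, but it takes a genuinely different route from the paper's. The paper restricts scalars rather than descending: it turns $N'$ into a $\Z_p$-lattice $L'$ of rank $fn$ via the trace form $\langle x,y\rangle=\tr_{W/\Z_p}(x,y)$, on which $\phi_0$ becomes $\Z_p$-\emph{linear} with minimal polynomial $T^f-q$, and then reruns the Bezout-coefficient trick from the proof of Proposition~\ref{discrim}: writing $T^f-q=g(T)h(T)$ with $g(T)=T-p$, choosing $a(T)g(T)+b(T)h(T)=1$ with $p^ra,p^rb\in\Z_p[T]$, it gets a sublattice $h(\phi_0)L'\oplus g(\phi_0)L'$ of index at most $p^{fnr}$ in $L'$, takes $C_4=r$, and finishes with Lemma~\ref{disc-lem} exactly as you do. You instead renormalize to $\psi=p^{-1}\phi_0$, note $\psi^f=\id$ so that $\psi$ is a semilinear $\Gal(K/\Q_p)$-action, saturate $N'$ inside the $\psi$-stable lattice $\Lambda=\sum_{i=0}^{f-1}\psi^i(N')$ of index at most $(f-1)n$, and apply Hilbert~90 (semilinear descent along $\Z_p\to W$). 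Both proofs share the same skeleton---reduce eigenspace extraction to an index bound, then convert via Lemma~\ref{disc-lem}---but the splitting mechanisms differ, and your steps all check out: $N_0\cap N'=\Lambda\cap V^\psi\cap N'=N$ does give the injection $N_0/N\hookrightarrow\Lambda/N'$, and your verification that the form is $\Q_p$-valued on $V^\psi$ plays the role that the factor-$f$ scaling between the trace form on $L$ and the form on $N$ plays in the paper. Your route even yields a slightly sharper and fully explicit constant: since the resultant of $T-p$ and $h(T)$ is $h(p)=fp^{f-1}$, the paper's $r$ is essentially $f-1+v_p(f)$, whereas you get $C_4=f-1$ outright, with no loss when $p\mid f$. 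What the paper's approach buys in exchange is uniformity---it reuses the linear-algebra machinery of Proposition~\ref{discrim} verbatim and avoids invoking semilinear Galois descent.
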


\begin{proof}
Let $g(T)=T-p$ and let $h(T)=(T^f-q)/(T-p)$, two monic polynomials in $\Z_p[T]$.  Pick polynomials $a(T)$ and $b(T)$ in
$\Q_p[T]$ such that
\begin{displaymath}
a(T) g(T) + b(T) h(T) = 1.
\end{displaymath}
Let $r$ be such that $p^r a(T)$ and $p^r b(T)$ belong to $\Z_p[T]$.  We claim that we can take $C_4=r$.

Thus let $N'$ with $\phi_0$ be given.  Let $L'$ be $N'$, regarded as a $\Z_p$-lattice with pairing $\langle, \rangle$
given by $\langle x, y \rangle =\tr_{W/\Z_p} (x, y)$.  We have $\rk(L')=fn$ and $\disc(L')
=\mathbf{N}(\disc(N'))$, where $\mathbf{N}$ is the norm from $W$ to $\Z_p$.  We regard $\phi_0$ as a linear map
of $L'$.  As such, it is semi-simple (when $p$ is inverted) with minimal polynomial $T^f-q$.  The identity
$\langle \phi_0 x, \phi_0 y \rangle=p^2 \langle x, y \rangle$ holds.
Let $L$ be the $p$ eigenspace of $\phi_0$, regarded as a sublattice of $L'$.  Then
$L$ and $N$ are the same subset of $N'$, but the form on $L$ is that on $N$ scaled by $f$.  It follows that
$\disc(N)$ divides $\disc(L)$.

Put $L_1=h(\phi_0) L'$ and $L_2=g(\phi_0) L'$.  Then $L_1$ and $L_2$ are orthogonal under $\langle, \rangle$ and
$L_1 \oplus L_2$ has index
at most $p^{fnr}$ in $L'$.  It follows from Lemma~\ref{disc-lem} that $\disc(L_1)$ divides $p^{2fnr} \disc(L')$.  Since
$L_1$ is contained in $L$, we find that $\disc(L)$ divides $p^{2fnr} \disc(L')$ as well.  Finally, we see that
$\disc(N)$ divides $p^{2fnr} \mathbf{N}(\disc(N'))$, and so $v_p(\disc(N))$ is bounded by $2fnr+f v_p(\disc{N'})$.
\end{proof}

\begin{remark}
As far as we are aware, it is not known if the action of $\phi$ on $M_p(X)[1/p]$ is semi-simple.  See
Corollary 7.5 of \cite{Ogus2} for a partial result.  However, for K3 surfaces which satisfy the Tate conjecture, semi-simplicity
is known, and can be easily be deduced from the aforementioned result of Ogus.
\end{remark}

\subsection{Controlling the N\'eron--Severi lattice}
\label{s:neron}

Let $X$ be a K3 surface over $k$.  Write $\NS(X)$ for the N\'eron--Severi group of $X$, which is a lattice (over
$\Z$) under the intersection pairing.  The main result of this section is the following:

\begin{prop}
\label{step2}
There exists a finite set $\ms{L}=\ms{L}(k)$ of lattices over $\Z$ with the following property:  if $X$ is a
K3 surface over $k$ which satisfies the Tate conjecture (over $k$) then $\NS(X)$ is isomorphic to a member of $\ms{L}$.
\end{prop}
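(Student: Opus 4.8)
The plan is to pin down three invariants of the lattice $\NS(X)$---its rank, its signature, and its discriminant---and then invoke the classical finiteness of integral lattices with prescribed rank, signature and discriminant. The rank is at most $22$ (the second Betti number), and the Hodge index theorem forces the signature to be $(1, \rk \NS(X) - 1)$, so only the discriminant requires work. Since $\disc \NS(X)$ is a nonzero integer, it is enough to bound its $\ell$-adic valuation at each prime $\ell$ by a constant depending only on $k$, and to show that this valuation vanishes for all but finitely many $\ell$; this is exactly the kind of statement provided by Propositions~\ref{step1a} and~\ref{step1b}.

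For a prime $\ell \ne p$, the first step is to identify $\NS(X) \otimes \Z_{\ell}$ with $N_{\ell}(X)$. I would argue that both are saturated sublattices of $M_{\ell}(X)$ spanning the same $\Q_{\ell}$-subspace, and hence coincide. Saturation of $N_{\ell}(X) = M_{\ell}(X)^{\phi = q}$ is automatic, as it is the kernel of an endomorphism of a free module. For $\NS(X) \otimes \Z_{\ell}$ I would use that $\NS(X) = \Pic(X_{\ol{k}})^{\Gal(\ol{k}/k)}$ (because $\Br(k) = 0$), that invariants form a saturated submodule, and that the Kummer sequence $0 \to \Pic(X_{\ol{k}}) \otimes \Z_{\ell} \to \Het^2(X_{\ol{k}}, \Z_{\ell}(1)) \to T_{\ell}\Br(X_{\ol{k}}) \to 0$ identifies $\Pic(X_{\ol{k}}) \otimes \Z_{\ell}$ with a saturated sublattice of $M_{\ell}(X)$, its cokernel $T_{\ell}\Br(X_{\ol{k}})$ being torsion-free; chaining these two saturations shows $\NS(X) \otimes \Z_{\ell}$ is saturated in $M_{\ell}(X)$. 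That the two lattices span the same $\Q_{\ell}$-subspace is precisely the Tate conjecture together with the injectivity of the cycle class map. Consequently $v_{\ell}(\disc \NS(X)) = v_{\ell}(\disc N_{\ell}(X))$, which Proposition~\ref{step1a} bounds by $C_2$ in general and by $0$ once $\ell > C_1$.

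The prime $p$ is treated in the same spirit, with crystalline cohomology replacing \'etale cohomology. The crystalline cycle class map carries $\NS(X) \otimes \Z_p$ into $N_p(X) = M_p(X)^{\phi_0 = p}$, and under the Tate conjecture the two $\Z_p$-lattices span the same $\Q_p$-subspace of $M_p(X)[1/p]$; comparing saturations inside $M_p(X)$ then bounds $v_p(\disc \NS(X))$ in terms of $v_p(\disc N_p(X))$. To apply Proposition~\ref{step1b} I must know that $q$ is a semi-simple eigenvalue of $\phi$ on $M_p(X)[1/p]$, and this is exactly the input that the Tate conjecture supplies (via the result of Ogus recalled in the remark above). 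Thus $v_p(\disc \NS(X))$ is likewise bounded in terms of $C_3$.

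Putting the cases together, $\lvert \disc \NS(X) \rvert$ is bounded by a constant depending only on $k$, so $\disc \NS(X)$ takes only finitely many values. As $\NS(X)$ then has bounded rank, fixed signature $(1, \rk \NS(X) - 1)$, and bounded discriminant, the classical finiteness theorem for integral quadratic forms of given rank and discriminant produces the desired finite set $\ms{L}$. I expect the crystalline case to be the main obstacle: the integral comparison between $\NS(X) \otimes \Z_p$ and $N_p(X)$ requires a $p$-adic analogue of the Kummer-sequence and saturation argument that is markedly more delicate than its $\ell$-adic counterpart, and it is there that the semi-simplicity hypothesis is genuinely used.
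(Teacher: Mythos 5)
Your overall skeleton (bound the discriminant prime by prime, then invoke the finiteness of integral lattices of given rank and discriminant) is the paper's, and your treatment of $\ell \ne p$ is essentially the paper's Lemma~\ref{tate-ell} verbatim: saturation of $\NS(X)\otimes\Z_\ell$ via the Kummer sequence and torsion-freeness of $T_\ell\Br$, plus saturation of $N_\ell(X)$, plus Tate to get equality of $\Q_\ell$-spans. But there is a genuine gap at $p$, and it is twofold. First, in the finite height case you assert that ``comparing saturations inside $M_p(X)$'' bounds $v_p(\disc\NS(X))$, i.e.\ you implicitly claim $\NS(X)\otimes W\{-1\}$ is saturated in $\Hcris^2(X/W)$. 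This is not a formal saturation argument as in the \'etale case: the paper's Lemma~\ref{tate-p} proves it by choosing a lift $\mf X$ of $X$ to $W$ with $\NS(\mf X)\simto\NS(X)$ (\cite[Cor~4.2]{LieblichMaulik}), applying the Fontaine--Laffaille/Fontaine--Messing functor $T$ (this is where $p\ge 5$, more precisely $p-1>2$, enters), and transporting the question through the comparison isomorphism to the \'etale side in characteristic $0$, where saturation of $\NS(-1)$ in $\Het^2$ is classical. You correctly flag this as ``the main obstacle,'' but flagging it is not supplying it, and there is no soft substitute.

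Second, and more seriously, your uniform strategy is \emph{false} for infinite height K3 surfaces, which the proposition must cover (under Tate they are Shioda-supersingular, $\rho=22$). There $\disc\NS(X_{\ol k})=-p^{2\sigma_0}$ with $\sigma_0\ge 1$ by Ogus, while $M_p(X)$ is unimodular of rank $22$; so $\NS(X)\otimes W$ has full rank but non-unit discriminant, hence is \emph{not} saturated in $\Hcris^2(X/W)$, and no lift to characteristic $0$ preserving the rank-$22$ Picard group can exist. Your saturation comparison at $p$ therefore cannot go through, and no bound of the shape $v_p(\disc\NS(X))\le v_p(\disc N_p(X))$ follows from your argument. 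The paper handles this case by an entirely separate mechanism that your proposal is missing: it quotes Ogus's formula $\disc\NS(X_{\ol k})=-p^a$, $1\le a\le 20$, to bound the geometric discriminant outright, and then descends from $\ol k$ to $k$ by noting that Frobenius acts on $\NS(X_{\ol k})$ with finite order (roots of unity are Weil $q$-integers of weight $0$), so Proposition~\ref{discrim} with $w=0$, applied to $\NS(X_{\ol k})\otimes\Z_\ell$ at every $\ell$ (including $\ell=p$), bounds $\disc\bigl(\NS(X_{\ol k})^{\phi=1}\bigr)=\disc\NS(X)$. Without this case division and the supersingular input, your proof establishes the proposition only for finite height surfaces, and even there only modulo the unproved integral crystalline comparison. (Minor points: your appeal to the semi-simplicity of $q$ via Ogus under Tate matches the paper's remark and is fine; the signature normalization $(1,\rho-1)$ is harmless but unnecessary, since finiteness of classes needs only rank and discriminant.)
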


\begin{lem}
\label{tate-ell}
Let $X$ be a K3 surface over $k$ which satisfies the Tate conjecture and let $\ell \ne p$ be a prime.  Then the map
\begin{displaymath}
c_1:\NS(X) \otimes \Z_{\ell} \to N_{\ell}(X)(1)
\end{displaymath}
is an isomorphism.
\end{lem}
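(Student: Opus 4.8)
The plan is to deduce the integral isomorphism from the \emph{rational} Tate conjecture together with the Kummer sequence, using the Brauer group of $X_{\ol k}$ as the device that measures the cokernel of $c_1$. Write $G=\Gal(\ol k/k)$, topologically generated by the geometric Frobenius $\phi$. First I would record the Kummer sequence over $\ol k$. Since $X$ is a K3 surface, $\Pic(X_{\ol k})=\NS(X_{\ol k})$ is finitely generated and torsion-free and $\Het^2(X_{\ol k},\Z_\ell)$ is torsion-free, so passing to the inverse limit over $n$ in
\[
0\to\Pic(X_{\ol k})/\ell^n\to\Het^2(X_{\ol k},\m_{\ell^n})\to\Br(X_{\ol k})[\ell^n]\to 0
\]
gives a short exact sequence of $\Z_\ell[G]$-modules
\[
0\to\NS(X_{\ol k})\otimes\Z_\ell\xrightarrow{\;c_1\;}M_\ell(X)(1)\to T_\ell\Br(X_{\ol k})\to 0,
\]
in which $T_\ell\Br(X_{\ol k})$ is a free $\Z_\ell$-module (so $\NS(X_{\ol k})\otimes\Z_\ell$ is saturated in $M_\ell(X)(1)$).

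Next I would apply $G$-invariants, a left exact functor, to this sequence. On the middle term, the definition of the Tate twist gives $(M_\ell(X)(1))^G=N_\ell(X)(1)$, the integral $\phi=q$ eigenspace. On the left term, the vanishing $\Br(k)=0$ for a finite field makes $\Pic(X)\to\Pic(X_{\ol k})^G$ an isomorphism, and flatness of $\Z_\ell$ over $\Z$ identifies $(\NS(X_{\ol k})\otimes\Z_\ell)^G$ with $\NS(X)\otimes\Z_\ell$. Hence the induced map on invariants is precisely the map $c_1$ of the statement, and left exactness shows its cokernel embeds into $(T_\ell\Br(X_{\ol k}))^G$.

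The crux is therefore to prove $(T_\ell\Br(X_{\ol k}))^G=0$, and this is where the Tate hypothesis is used. Invert $\ell$ and set $V=M_\ell(X)(1)\otimes\Q_\ell$; by Deligne's theorem $\phi$ acts semisimply on $V$, so the functor of $\phi$-invariants is exact on the rationalized sequence. The subspace $W=\NS(X_{\ol k})\otimes\Q_\ell$ is $\phi$-stable with $\phi$ acting through a finite quotient, and $W^{\phi=1}=\NS(X)\otimes\Q_\ell$. The Tate conjecture for $X$ asserts exactly that $W^{\phi=1}$ surjects onto $V^{\phi=1}=N_\ell(X)(1)\otimes\Q_\ell$; as the inclusion of the former into the latter is also injective (the intersection form is nondegenerate on $\NS\otimes\Q$), it is an isomorphism. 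Exactness of invariants now forces $(V/W)^{\phi=1}=(T_\ell\Br(X_{\ol k})\otimes\Q_\ell)^{\phi=1}=0$, and since $T_\ell\Br(X_{\ol k})$ is torsion-free we conclude $(T_\ell\Br(X_{\ol k}))^G=0$.

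Combining the last two steps, $c_1\colon\NS(X)\otimes\Z_\ell\to N_\ell(X)(1)$ is surjective; it is injective because $\NS(X)\otimes\Z_\ell$ is torsion-free and $c_1\otimes\Q_\ell$ is injective, so it is an isomorphism. I expect the only genuine obstacle to be the vanishing $(T_\ell\Br(X_{\ol k}))^G=0$: the remainder is formal bookkeeping with the Kummer sequence, but this step is exactly the upgrade from rational to integral surjectivity and relies essentially on the semisimplicity of Frobenius to make rational invariants exact.
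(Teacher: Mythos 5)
Your proof is correct, but it runs on a different engine than the paper's. Both arguments start identically: the Kummer sequence over $\ol k$, the inverse limit, and the torsion-freeness of $T_\ell\Br(X_{\ol k})$. The paper then stays entirely at the level of lattices: torsion-freeness of the Tate module means the image of $\NS(X_{\ol k})\otimes\Z_\ell$ is \emph{saturated} in $\Het^2(X_{\ol k},\Z_\ell(1))$, saturation of $\NS(X)$ in $\NS(X_{\ol k})$ then makes the image of $c_1$ saturated in $N_\ell(X)(1)$, and the rational isomorphism supplied by Tate upgrades to an integral one because a saturated $\Z_\ell$-submodule of full rank is everything. You instead take $G$-invariants of the sequence, identify the resulting map with $c_1$ (using $\Br(k)=0$ to get $\NS(X)=\NS(X_{\ol k})^G$, where the paper only needs saturation of this inclusion), and reduce surjectivity to the vanishing $(T_\ell\Br(X_{\ol k}))^G=0$, which you prove from the rational Tate conjecture \emph{plus} Deligne's semisimplicity of Frobenius on $\Het^2$ (needed to make $\phi$-invariants exact on the rationalized sequence). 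The trade-off: your route requires an extra nontrivial input (semisimplicity, which the paper does cite but does not use inside this lemma), whereas the paper's saturation argument is more elementary at this point; in exchange, your argument isolates the statement $T_\ell\Br(X_{\ol k})^{\phi=1}=0$, which is the standard bridge in the ``Tate conjecture $\Leftrightarrow$ finiteness of $\Br$'' circle of ideas and is of independent interest. All your intermediate steps check out: the identification $(M_\ell(X)(1))^{\phi=1}=N_\ell(X)(1)$ matches the paper's remark on the cancelling Tate twist, the cokernel of the map on invariants does embed in $(T_\ell\Br)^G$ by left exactness alone, and injectivity follows as you say from torsion-freeness.
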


\begin{proof}
The Kummer sequence on $X_{\ol{k}}$ gives a short exact sequence
\begin{displaymath}
0 \to \NS(X_{\ol{k}}) \otimes \Z/\ell^n \Z \to \H^2_{\et}(X_{\ol{k}}, \Z/\ell^n \Z(1)) \to \Br(X_{\ol{k}})[\ell^n] \to 0.
\end{displaymath}
Taking the inverse limit over $n$, we obtain an exact sequence
\begin{displaymath}
0 \to \NS(X_{\ol{k}}) \otimes \Z_{\ell} \to \H^2_{\et}(X_{\ol{k}}, \Z_{\ell}(1)) \to T_{\ell}(\Br(X_{\ol{k}})) \to 0,
\end{displaymath}
where $T_{\ell}$ here denotes Tate module.  The first map above is $c_1$.  Since the $\ell$-adic Tate module of any abelian group has no $\ell$-torsion, we see that the image of $\NS(X_{\ol{k}}) \otimes \Z_{\ell}$ in $\H^2_{\et}(X_{\ol{k}}, \Z_{\ell}(1))$ is saturated.  As $\NS(X)$ is saturated in $\NS(X_{\ol{k}})$, it follows that the image of the map
\begin{displaymath}
c_1:\NS(X) \otimes \Z_{\ell} \to N_{\ell}(X)(1)
\end{displaymath}
is saturated.  Since $X$ satisfies the Tate conjecture, the above map is an isomorphism when $\ell$ is inverted.  From saturatedness, it is therefore an isomorphism without $\ell$ inverted.
\end{proof}

\begin{remark}
The Chern class map appearing in the statement of the lemma may look odd, as $\NS(X)$ carries no Galois action
but it appears as if $N_{\ell}(X)(1)$ does.  However, one should regard $N_{\ell}(X)$ as carrying an action by
the inverse of the cyclotomic character (as it is a $\phi=q$ eigenspace), and so the Tate twist cancels this
action.
\end{remark}

In the following lemma and proof, let $\{1\}$ denote the Tate twist in crystalline and de Rham cohomology.

\begin{lem}
\label{tate-p}
Let $X$ be a K3 surface over $k$ which satisfies the Tate conjecture.  Then the map
\begin{displaymath}
c_1:\NS(X) \otimes \Z_p \to N_p(X)\{1\}
\end{displaymath}
is an isomorphism.
\end{lem}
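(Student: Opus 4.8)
The plan is to mirror the $\ell$-adic argument of Lemma~\ref{tate-ell}, replacing étale cohomology and the usual Kummer sequence by crystalline cohomology and its flat counterpart. The role played by $T_{\ell}(\Br(X))$ there will here be played by the $p$-adic Tate module of the Brauer group, and the finite-height hypothesis is what lets the crystalline comparison go through cleanly.

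First I would set up the integral $p$-adic cycle class. The flat Kummer sequence on $X$ with $\m_{p^n}$-coefficients yields short exact sequences
\[
0 \to \NS(X) \otimes \Z/p^n\Z \to \H^2_{\mathrm{fl}}(X, \m_{p^n}) \to \Br(X)[p^n] \to 0,
\]
and passing to the inverse limit over $n$ produces
\[
0 \to \NS(X) \otimes \Z_p \to \varprojlim_n \H^2_{\mathrm{fl}}(X, \m_{p^n}) \to T_p(\Br(X)) \to 0,
\]
the first map being $c_1$. The essential input is then the comparison identifying $\varprojlim_n \H^2_{\mathrm{fl}}(X, \m_{p^n})$ with $N_p(X)\{1\}=M_p(X)^{\phi_0=p}\{1\}$: the $\phi_0=p$ eigenspace of crystalline cohomology is, after the Tate twist, exactly the Frobenius-fixed flat cohomology, via the logarithmic de Rham--Witt sheaf. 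For a K3 surface of finite height this comparison is well-behaved, since the formal Brauer group $\wh{\Br}(X)$ is a $p$-divisible group of finite height, the slope spectral sequence for the de Rham--Witt complex degenerates, and the slopes of $\phi$ on $M_p(X)[1/p]$ are symmetric about $1$ with the slope-$1$ part accounting precisely for the $\phi=q$ eigenspace; in particular the target $N_p(X)$ is free of the expected rank.

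Granting the comparison, the argument finishes exactly as in the $\ell$-adic case. Since $T_p(\Br(X))$, being a $p$-adic Tate module, is automatically torsion-free, the image of $\NS(X)\otimes\Z_p$ in $M_p(X)$ is saturated. Because $X$ satisfies the Tate conjecture, $c_1$ becomes an isomorphism after inverting $p$; combined with saturatedness, this forces $c_1$ to be an isomorphism integrally.

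The main obstacle is the comparison invoked in the second paragraph, namely the identification of the limit flat cohomology $\varprojlim_n \H^2_{\mathrm{fl}}(X, \m_{p^n})$ with the twisted $\phi_0=p$ part of $\Hcris^2(X/W)$. This is exactly where finite height is indispensable: torsion-freeness of $T_p(\Br(X))$ is formal, but in the supersingular case the formal Brauer group degenerates, the slope-$1$ part of $M_p(X)$ swells, and the clean flat--crystalline identification fails. This is the reason Proposition~\ref{step2} splits into the finite- and infinite-height cases, the supersingular case requiring a separate treatment rather than the integral refinement given here.
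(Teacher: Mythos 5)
Your route is genuinely different from the paper's. The paper never touches flat cohomology: it chooses a lift $\mf X$ of $X$ to $W$ with full N\'eron--Severi group (\cite[Cor.~4.2]{LieblichMaulik}), observes that $c_1$ is a map of strongly divisible lattices, and uses the Fontaine--Laffaille/Fontaine--Messing equivalence to transport the saturation question to the \'etale cohomology of the generic fibre in characteristic zero, where it reduces to the classical fact (exponential sequence over $\C$) that $\NS(-1)$ is saturated in $\H^2_{\et}$. You instead stay in characteristic $p$: fppf Kummer sequence, inverse limit (legitimate, since the transition maps on the $\NS \otimes \Z/p^n\Z$ terms are surjective), torsion-freeness of $T_p(\Br(X))$ giving saturation, and then an identification of $\varprojlim_n \H^2_{\mathrm{fl}}(X,\m_{p^n})$ with $N_p(X)\{1\}$. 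This is the classical Artin--Milne/Nygaard--Ogus style of argument, and if completed it would avoid both the lifting input and the constraint $p-1>2$ that the Fontaine--Messing comparison imposes on the paper's proof.

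However, as written there is a genuine gap exactly at what you call ``the main obstacle'': the integral identification $\varprojlim_n \H^2_{\mathrm{fl}}(X,\m_{p^n}) \simeq M_p(X)^{\phi_0=p}\{1\}$ is asserted, not proved, and it carries essentially all of the difficulty of the lemma. To make it a proof you would need: (i) the identification $\H^2_{\mathrm{fl}}(X,\m_{p^n}) \cong \H^1_{\et}(X, W_n\Omega^1_{\log})$; (ii) descent to the finite field --- harmless here because $\Pic(X_{\ol k})$ is torsion-free, so $\H^1_{\mathrm{fl}}(X_{\ol k},\m_{p^n})=0$ and Hochschild--Serre gives $\H^2_{\mathrm{fl}}(X,\m_{p^n}) \cong \H^2_{\mathrm{fl}}(X_{\ol k},\m_{p^n})^{\Gal}$, while taking $\phi_0=p$ fixed points commutes with taking Galois invariants of $M_p(X)\otimes_{W(k)} W(\ol k)$, but this needs saying; (iii) the real content: for finite height, the map $\H^1(X_{\ol k}, W\Omega^1_{\log}) \to \Hcris^2(X_{\ol k}/W(\ol k))^{\phi_0=p}$ is an \emph{integral} isomorphism, which rests on degeneration of the slope spectral sequence, finite generation of $\H^2(W\mc O)$ (where finite height genuinely enters, as you say), and a d\'evissage showing that the slope $\neq 1$ pieces contribute no $\phi_0=p$ fixed points integrally; and (iv) compatibility of the flat and crystalline Chern classes under this identification, since your saturation statement lives in the flat group while the conclusion lives in $\Hcris^2$ (note $N_p(X)$ is automatically saturated in $M_p(X)$, so once (iii) and (iv) hold the endgame is fine). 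All of this is in the literature (Illusie, Illusie--Raynaud, Milne; assembled for K3 surfaces in the Nygaard--Ogus circle of ideas), but you cite none of it precisely, and the paper deliberately structured this section to stay independent of that machinery. A smaller inaccuracy: the slope-$1$ part of $M_p(X)[1/p]$ does not ``account precisely for the $\phi=q$ eigenspace'' --- over a finite field it may contain eigenvalues $q\zeta$ with $\zeta \neq 1$ a unit Weil number; you only have, and only need, the containment of the eigenspace in the slope-$1$ part.
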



\begin{proof}
Since $X$ satisfies the Tate conjecture, $c_1$ is an isomorphism after inverting $p$. It is therefore enough to show that the image of $\NS(X) \otimes \Z_p$ in $\Hcris^2(X/W)\{1\}$ is saturated, or equivalently, that
\begin{displaymath}
c_1 \colon \NS(X) \otimes \Z/p\Z \to \Hcris^2(X/W)\{1\} \otimes \Z/p\Z = \Hdr^2(X/k)\{1\}
\end{displaymath}
is injective. This follows from \cite[Remark~3.5]{D2}.
\end{proof}

We now return to the proof of the proposition.

\begin{proof}[Proof of Proposition~\ref{step2}]
Let $X$ be a K3 surface over $k$ satisfying the Tate conjecture.  Let $\ell \ne p$ be a prime number.  By Lemma~\ref{tate-ell},
$\NS(X) \otimes \Z_{\ell}$ is isomorphic, as a lattice, to $N_{\ell}(X)(1)$.  Since $N_{\ell}(X)(1)$ is isomorphic,
as a lattice, to $N_{\ell}(X)$,
we find that $\disc(\NS(X))$ and $\disc(N_{\ell}(X))$ have the same $\ell$-adic valuations.  Similarly, appealing
to Lemma~\ref{tate-p}, we
find that $\disc(\NS(X))$ and $\disc(N_p(X))$ have the same $p$-adic valuations.  Applying
Propositions~\ref{step1a} and~\ref{step1b}, we find that
$\vert \disc(\NS(X)) \vert$ is at most $p^{C_3} \prod_{\ell \le C_1} \ell^{C_2}$.  As there are only finitely many
isomorphism classes of lattices of a given rank and discriminant \cite[Ch.~9, Thm.~1.1]{Cassels}, it follows that there
are only finitely many possibilities for $\NS(X)$ (up to isomorphism).
\end{proof}

\subsection{Constructing low-degree ample line bundles}

We assume for the rest of \S \ref{sec2} that $p \ge 5$.
Let $K$ be the extension of $k$ of degree
\begin{displaymath}
6983776800 = 2^5\cdot 3^3\cdot 5^2\cdot 7\cdot 11\cdot 13\cdot 17\cdot 19.
\end{displaymath}
The field $K$ is relevant for the following reason (explained below):  if $X$ is a K3 surface over $k$ then any line
bundle on $X_{\ol{k}}$ descends to one on $X_K$.  The purpose of this section is to prove the following
proposition:

\begin{prop}
\label{step3}
There exists a constant $C_5=C_5(k)$ with the following property:  if $X$ is a K3 surface over $k$ which satisfies
the Tate conjecture over $K$ then $X$ admits an ample line bundle of degree at most $C_5$ defined over $K$.
\end{prop}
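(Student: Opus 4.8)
The plan is to combine the finiteness of Néron--Severi lattice types furnished by Proposition~\ref{step2} with the description of the ample cone of a K3 surface as a single chamber for the Weyl group generated by $(-2)$-reflections. The essential point is that the minimal degree of a polarization is a function of the isometry class of $\NS(X_{\ol{k}})$ alone, and hence is bounded once only finitely many such classes occur.

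First I would pass to $K$. By the choice of $K$ recalled above, the map $\Pic(X_K) \to \Pic(X_{\ol{k}})$ is an isomorphism, and since a K3 surface satisfies $\Pic = \NS$ this gives $\NS(X_K) = \NS(X_{\ol{k}})$. The hypothesis that $X$ satisfies the Tate conjecture over $K$ means precisely that the surface $X_K$ satisfies the Tate conjecture over its base field $K$, so Proposition~\ref{step2} (applied with $K$ in place of $k$) shows that $\NS(X_{\ol{k}}) = \NS(X_K)$ is isometric to a member of a finite set $\ms{L}(K)$ of lattices depending only on $K$, and hence only on $k$.

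Next I would invoke the structure of the ample cone. The lattice $L = \NS(X_{\ol{k}})$ has signature $(1,\rho-1)$; fix the component $\mathcal{C}^+$ of the positive cone $\{x : (x,x) > 0\}$ containing the ample classes. For each $\delta \in L$ with $(\delta,\delta) = -2$ the reflection $s_\delta(x) = x + (x,\delta)\delta$ is an isometry of $L$ preserving $\mathcal{C}^+$, and the group $W$ they generate acts simply transitively on the set of chambers, that is, the connected components of $\mathcal{C}^+$ with the hyperplanes $\delta^{\perp}$ removed. The ample cone of $X_{\ol{k}}$ is exactly one such chamber $A$. For a chamber $A'$, let $\mu(A')$ denote the minimum of $(v,v)$ over integral points $v \in A' \cap L$; the existence of ample classes shows that $A$ contains integral points, so $\mu(A)$ is finite. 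Since every $w \in W$ preserves $L$ and the form and permutes the chambers, it carries $A' \cap L$ bijectively onto $(wA') \cap L$ while preserving self-intersections; hence $\mu$ is constant on chambers, equal to a number $d_0(L)$ depending only on the isometry class of $L$. In particular the ample cone $A$ contains an integral class $h$ with $(h,h) = d_0(L)$.

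Finally I would set $C_5 = \max_{L \in \ms{L}(K)} d_0(L)$, which is finite because $\ms{L}(K)$ is finite and each relevant $L$, being realized as some $\NS(X_{\ol{k}})$, carries ample classes and so has $d_0(L) < \infty$. The class $h$ is then an ample class on $X_{\ol{k}}$ of degree $(h,h) = d_0(L) \le C_5$; since $\NS(X_K) = \NS(X_{\ol{k}})$ it is represented by a line bundle on $X_K$, which is ample because ampleness may be checked after base change to $\ol{k}$, and which has degree at most $C_5$. The main obstacle is the third step: converting the arithmetic finiteness of lattice types into a geometric bound on the polarization degree. This rests on the Weyl-chamber description of the ample (Kähler) cone of a K3 surface together with the observation that the minimal polarization degree is $W$-invariant and therefore depends only on the isometry class of $\NS(X_{\ol{k}})$, not on which chamber happens to be the ample one.
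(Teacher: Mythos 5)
Your proposal is correct and takes essentially the same route as the paper: descend line bundles to $X_K$, invoke Proposition~\ref{step2} over $K$, and use the Weyl-chamber structure of the ample cone to convert finiteness of lattice types into a degree bound; your observation that the minimal self-intersection of integral points is constant across chambers (via $W$-transitivity on chambers) is a repackaging of the paper's Lemma~\ref{step3-1} and Corollary~\ref{step3-2}, which rest on exactly the same facts from Ogus. The only step you quote rather than prove is the identification $\NS(X_K)=\NS(X_{\ol{k}})$, which the paper establishes inside the proof: Frobenius acts on $\NS(X_{\ol{k}})$ with finite order and eigenvalues that are roots of unity of degree at most $21$, and since every integer $m$ with $\varphi(m)\le 21$ divides $[K:k]$, one gets $\phi^{[K:k]}=1$.
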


We begin with a lemma.

\begin{lem}
\label{step3-1}
Let $X$ and $X'$ be K3 surfaces over algebraically closed fields such that $\NS(X)$ and $\NS(X')$ are isomorphic as
lattices.  Then the set of degrees of ample line bundles on $X$ and $X'$ coincide.
\end{lem}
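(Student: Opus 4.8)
The plan is to reduce the statement to a purely lattice-theoretic one. Since the degree of a line bundle $L$ is the self-intersection $L^2$, a quantity computed inside $\NS$, it suffices to characterize, in terms of the lattice $\NS(X)$ alone, which positive integers occur as $L^2$ for $L$ ample. To this end I would introduce, for any lattice $M$,
\[
D(M) = \{\, x^2 : x \in M,\ x^2 > 0,\ x \cdot \delta \neq 0 \text{ for every } \delta \in M \text{ with } \delta^2 = -2 \,\},
\]
and show that $D(\NS(X))$ equals the set of degrees of ample line bundles on $X$. As $D$ depends only on the isomorphism class of the lattice, an isometry $\NS(X) \simto \NS(X')$ immediately yields $D(\NS(X)) = D(\NS(X'))$, and hence the equality of the two degree sets.

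The geometric input I would use is the standard description of the ample cone of a K3 surface $X$ over an algebraically closed field. Recall that $\Pic(X) = \NS(X)$ for a K3 surface, so every class is represented by a line bundle, and that by Nakai--Moishezon an integral class lying in the open ample cone is the class of an ample line bundle. The locus $\{x^2 > 0\}$ in $\NS(X) \otimes \R$ has two connected components; let $\mathcal{C}^+$ be the one meeting the ample cone. The hyperplanes $\delta^{\perp}$, for $\delta \in \NS(X)$ with $\delta^2 = -2$, cut $\mathcal{C}^+$ into chambers, and the Weyl group $W$ generated by the reflections $s_\delta$ acts simply transitively on these chambers, the ample cone being one of them (this is classical; see, e.g., Huybrechts' \emph{Lectures on K3 Surfaces}, Ch.~8). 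In particular, a class in $\mathcal{C}^+$ lying off every wall $\delta^{\perp}$ lies in a single $W$-translate of the open ample cone.

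With this in hand the two inclusions are quick. If $L$ is ample then $L^2 > 0$ and $L$ lies in the open ample chamber, so $L \cdot \delta \neq 0$ for every $(-2)$-class $\delta$; thus $L^2 \in D(\NS(X))$. Conversely, given $d \in D(\NS(X))$, choose $x \in \NS(X)$ with $x^2 = d$ and $x \cdot \delta \neq 0$ for all $(-2)$-classes $\delta$. Replacing $x$ by $-x$ if necessary --- which preserves both $x^2$ and the off-wall condition, since $\delta \mapsto -\delta$ permutes the $(-2)$-classes --- I may assume $x \in \mathcal{C}^+$. Then $x$ lies off all walls, so by simple transitivity there is $w \in W$ with $w(x)$ in the open ample cone; as $w$ preserves the lattice, $w(x)$ is an integral class in the open ample cone, hence the class of an ample line bundle, and $w(x)^2 = x^2 = d$. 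Therefore $d$ is the degree of an ample line bundle on $X$, establishing $D(\NS(X)) = \{ L^2 : L \text{ ample}\}$.

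The one point requiring care --- and the only place the geometry of $X$ enters --- is the chamber description and the simple transitivity of the Weyl group over an arbitrary algebraically closed field. I expect this to be the main (though standard) obstacle: it rests on Riemann--Roch on the K3 surface, which shows that every class $\delta$ with $\delta^2 \geq -2$ is effective or anti-effective, together with the Nakai--Moishezon criterion, both of which are characteristic-independent, so the description carries over verbatim to positive characteristic. Once this is granted, the manifest lattice-invariance of $D$ does all the remaining work.
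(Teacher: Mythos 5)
Your proof is correct and follows essentially the same route as the paper: both rest on the wall-and-chamber decomposition of the positive cone by the $(-2)$-classes, the transitivity of $\pm W$ on the chambers, and the fact that the ample classes are exactly the integral points of one chamber (the paper cites Ogus for these inputs, which are characteristic-free as you note). The only difference is packaging --- you extract the lattice-theoretic invariant $D(M)$ and show it equals the degree set, whereas the paper transports the ample chamber of $X$ to that of $X'$ via the isometry corrected by an element of $\pm\Gamma'$ --- but the mathematical content is identical.
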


\begin{proof}
Put $N=\NS(X)$ and $N_{\R}=N \otimes \R$.  Let $\Delta$ be the set of elements $\delta$ in $N$ such that $(\delta,
\delta)=-2$.  For $\delta \in \Delta$, let $r_{\delta}:N_{\R} \to N_{\R}$ be the reflection given by $r_{\delta}(x)
=x+(x,\delta) \delta$.  Let $\Gamma$ be the group generated by the $r_{\delta}$, with $\delta \in \Delta$.
Finally, let $V$ be the set of elements $x$ in $N_{\R}$ such that $(x,x)>0$ and $(x, \delta) \ne 0$ for all $\delta
\in \Delta$.  Then $V$ is an open subset of $N_{\R}$ and the group $\pm \Gamma$ acts transitively on the set of
connected components of $V$ \cite[Prop.~1.10]{Ogus}.  Furthermore, there exists a unique connected component
$V_0$ of $V$ such that an element of $N$ is ample if and only if it lies in $V_0$ \cite[p.~371]{Ogus}.

Let $N'$, etc., be as above but for $X'$.  Choose an isomorphism of lattices $i:N \to N'$.  Clearly, $i$ induces
a bijection $\Delta \to \Delta'$ and a homeomorphism $V \to V'$.  Thus $i(V_0)$ is some connected component of
$V'$.  We can find an element $\gamma$ of $\pm \Gamma'$ such that $\gamma(i(V_0))=V_0'$.  Thus, replacing $i$ by
$\gamma i$, we can assume that $i(V_0)=V_0'$.  It then follows that $i$ induces a bijection between the set of
ample elements in $N$ and the set of ample elements in $N'$.  Since $i$ preserves degree, this proves the lemma.
\end{proof}

\begin{cor}
\label{step3-2}
For every lattice $L$ there is an integer $d(L)$ with the following property:  if $X$ is a K3 surface over an
algebraically closed field such that $\NS(X)$ is isomorphic to $L$ then $X$ admits an ample line bundle of degree
$d(L)$.
\end{cor}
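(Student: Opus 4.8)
The plan is to deduce this directly from Lemma~\ref{step3-1}, splitting into two cases according to whether the lattice $L$ is realized as a N\'eron--Severi lattice. First I would dispose of the vacuous case: if there is no K3 surface $X$ over an algebraically closed field with $\NS(X)$ isomorphic to $L$, then the implication to be proved has no hypotheses that can be satisfied, and one may take $d(L)=1$ (or any integer whatsoever).

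In the remaining case, fix once and for all a single K3 surface $X_0$ over an algebraically closed field with $\NS(X_0) \cong L$. Since a K3 surface is projective by definition, it carries an ample line bundle $\mc{L}_0$; set $d(L)=(\mc{L}_0, \mc{L}_0)$, the degree of $\mc{L}_0$. This produces one well-defined integer $d(L)$ attached to $L$. Now let $X$ be an arbitrary K3 surface over an algebraically closed field with $\NS(X) \cong L$. Then $\NS(X) \cong \NS(X_0)$ as lattices, so Lemma~\ref{step3-1} applies and shows that the set of degrees of ample line bundles on $X$ coincides with the set of degrees of ample line bundles on $X_0$. As $d(L)$ belongs to the latter set by construction, it belongs to the former, which is exactly the assertion that $X$ admits an ample line bundle of degree $d(L)$.

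There is essentially no obstacle here, since the geometric content is already packaged in Lemma~\ref{step3-1}; the corollary merely fixes a representative to convert the equality of \emph{sets} of degrees into a single integer. The only two points that require a word of care are the passage to a fixed $X_0$ (so that $d(L)$ is a genuine integer rather than a set of integers, and so that it does not depend on $X$) and the invocation of projectivity of K3 surfaces to guarantee that the ample bundle $\mc{L}_0$ defining $d(L)$ exists at all.
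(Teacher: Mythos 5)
Your proof is correct and matches the paper's (implicit) argument: the corollary is stated without a proof precisely because it follows from Lemma~\ref{step3-1} exactly as you describe, by fixing one representative surface $X_0$ with $\NS(X_0)\cong L$, letting $d(L)$ be the degree of any ample line bundle on $X_0$, and transferring this degree to any other such $X$ via the equality of degree sets. Your explicit handling of the vacuous case and the remark that projectivity guarantees the existence of $\mc{L}_0$ are sensible bookkeeping points that the paper leaves unstated.
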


We now return to the proof of Proposition~\ref{step3}.

\begin{proof}[Proof of Proposition~\ref{step3}]
Let $X$ be a K3 surface over $k$.  The Frobenius element $\phi$ of the absolute Galois group of $k$ acts on
$\NS(X_{\ol{k}})$ as a finite order endomorphism.  It is therefore semi-simple.  Furthermore, since its characteristic
polynomial has degree at most $22$ and has at least one eigenvalue equal to $1$, its remaining eigenvalues which are roots of
unity of degree at most $21$.
At this point, we use the defining property of $K$.  By construction, any integer $m$ with $\varphi(m) \leq 21$ must
divide $N = \deg[K:k]$.  It follows that $\phi^N=1$ holds.  In particular, we see that $\NS(X_{\ol{k}})=\NS(X_K)$.

Let $\ms{L}=\ms{L}(K)$ be the set of lattices provided by Proposition~\ref{step2}.  Let $C_5$ be the maximum
value of $d(L)$ for $L \in \ms{L}$, where $d(L)$ is as defined in Corollary~\ref{step3-2}.
Let $X$ be a K3 surface over $k$ satisfying the Tate conjecture over $K$.  Then $\NS(X_K)$, and thus
$\NS(X_{\ol{k}})$, belongs to $\ms{L}$.  It follows that $X_{\ol{k}}$, and thus $X_K$, admits an ample line bundle of
degree at most $C_5$.  This proves the proposition.
\end{proof}

\subsection{Finiteness of twisted forms}

Recall that a \emph{twisted form} of a K3 surface $X/k$ is a K3 surface over $k$ which is isomorphic to $X$ over $\ol{k}$.  The purpose of this section is to establish the following result.

\begin{prop}
\label{fin-twist}
Let $X$ be a K3 surface over the finite field $k$.  Then $X$ has only finitely many twisted forms, up to
isomorphism.
\end{prop}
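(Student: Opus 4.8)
The plan is to realize the set of twisted forms as a non-abelian Galois cohomology set and to reduce its finiteness to a statement about isometries of the N\'eron--Severi lattice. A twisted form of $X$ is precisely a descent datum for $X_{\ol{k}}$ relative to $k$, so the set of isomorphism classes of twisted forms is the continuous cohomology set $H^1(G, A)$, where $G = \Gal(\ol{k}/k) \cong \widehat{\Z}$ is topologically generated by Frobenius and $A = \Aut(X_{\ol{k}})$. Let $L = \NS(X_{\ol{k}})$ and let $\rho\colon A \to O(L)$ be the (Galois-equivariant) action on the N\'eron--Severi lattice. Writing $A_0 = \ker \rho$ and $B = \operatorname{im} \rho$, I would use the short exact sequence $1 \to A_0 \to A \to B \to 1$ of $G$-groups, in which $A_0$ is finite (automorphisms acting trivially on $\NS$ form a finite group) and $B$ is a subgroup of $O(L)$ of finite index in the stabilizer of the ample cone of $X$.

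First I would dispose of $A_0$ using the exact sequence of pointed sets $H^1(G, A_0) \to H^1(G, A) \to H^1(G, B)$ together with its twisted refinements: since each twist of the finite group $A_0$ is again a finite $G$-group, all of the sets $H^1(G, {}_cA_0)$ are finite, and the twisting formalism shows that every fibre of $H^1(G,A) \to H^1(G,B)$ is finite. It therefore suffices to prove that $H^1(G,B)$ is finite.

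The decisive point is that Frobenius acts on $L$ with finite order: $L$ is defined over a finite extension $K/k$, and its eigenvalues on $L$ are roots of unity of bounded degree, so $\phi_X := \rho(\mathrm{Frob}) \in O(L)$ satisfies $\phi_X^N = 1$ and the $G$-action on $B$ (conjugation by $\phi_X$) factors through $\Gal(K/k) \cong \Z/N$. Mapping $B \rtimes \Z/N$ into $O(L)$ by $(\beta, \sigma^j) \mapsto \beta \phi_X^j$ --- a homomorphism, because the action is conjugation by $\phi_X$ --- I would identify a continuous class in $H^1(G, B)$ with the Frobenius isometry $\phi_Y = \beta \phi_X$ of the associated form $Y$, taken modulo $B$-conjugacy; continuity forces $\phi_Y$ to have finite order. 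Thus $H^1(G, B)$ is identified with the set of $B$-conjugacy classes of finite-order elements of $O(L)$ lying in the single coset $B\phi_X$.

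It remains to see that this set is finite, and this is where I expect the main difficulty to lie. By Minkowski's bound on the orders of finite subgroups of $\GL_n(\Z)$ and the Jordan--Zassenhaus theorem, there are only finitely many $O(L)$-conjugacy classes of finite-order elements of $O(L)$; the real content is to refine $O(L)$-conjugacy to $B$-conjugacy, which is genuinely subtle because $A$, and hence $B$, can be infinite (of infinite index in $O(L)$), so one cannot simply invoke finiteness of the cohomology of a finite group. To control the finer classes I would use that $B$, being of finite index in the ample-cone stabilizer, acts on the ample cone with a rational polyhedral fundamental domain, by the cone conjecture for K3 surfaces (Sterk, and its positive-characteristic analogue): any finite-order $\phi_Y$ fixes a rational interior point of the cone, which may be moved into the fundamental domain by an element of $B$, thereby placing $\phi_Y$ in one of the finitely many finite stabilizer subgroups attached to the faces of that domain. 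This yields finitely many $B$-conjugacy classes and finishes the proof. The heart of the matter is thus the interplay between the finite order of Frobenius on $\NS$, which collapses the cohomology to a set of finite-order conjugacy classes, and the arithmetic/polyhedral structure of the automorphism group, which makes those classes finite despite $A$ being infinite.
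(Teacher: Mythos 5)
Your proposal is correct and takes essentially the same route as the paper: both identify twisted forms with $H^1(\wh{\Z}, \Aut_{\ol{k}}(X_{\ol{k}}))$, reduce along the N\'eron--Severi representation using the finite-kernel/finite-index-image statement from the positive-characteristic cone theorem of Lieblich--Maulik, observe that (since Frobenius acts on $\NS(X_{\ol{k}})$ with finite order) multiplication by the Frobenius isometry turns $H^1$ into a set of conjugacy classes of finite-order isometries, and conclude by finiteness of such classes in the nef-cone stabilizer. The only difference is bookkeeping: the paper maps onto the full nef-cone stabilizer $G'$ and cites Totaro for the finiteness of finite-order conjugacy classes there, whereas you split off the finite kernel via the twisting formalism and re-derive that finiteness from the rational polyhedral fundamental domain argument.
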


The set of isomorphism classes of twisted forms of $X$ is in bijection with the non-abelian cohomology set
$\H^1(\Gal(\ol{k}/k),\Aut_{\ol{k}}(X_{\ol{k}}))$, and so to prove the proposition it suffices to show finiteness of this set.
We begin with two lemmas.  In what follows, $\wh{\Z}$ denotes the profinite completion of $\Z$ and $\phi$ a topological generator.  Suppose $\wh{\Z}$ acts continuously on a discrete group $E$.  A $1$-cocycle for this action is given by an element $x\in E$ such that $x x^{\phi} \cdots x^{\phi^{n-1}}=1$ for all sufficiently divisible integers $n$.  Moreover, cocycles represented by $x$ and $y$ are cohomologous if there exists an element $h \in E$ such that $x=h^{-1}yh^{\phi}$

\begin{lem}
Let $G$ and $G'$ be discrete groups on which $\wh{\Z}$ acts continuously and let $f:G \to G'$ be a $\wh{\Z}$-equivariant homomorphism
whose kernel is finite and whose image has finite index.  If $\H^1(\wh{\Z}, G')$ is finite then
$\H^1(\wh{\Z}, G)$ is finite.
\end{lem}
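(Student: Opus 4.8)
The plan is to show that finiteness of the cohomology set $\H^1(\wh{\Z}, G')$ transfers to $\H^1(\wh{\Z}, G)$ by breaking the map $f$ into two pieces: a surjection onto its image $G'' = f(G)$, which has finite kernel, and the inclusion of the finite-index subgroup $G'' \subset G'$. First I would reduce the finiteness of $\H^1(\wh{\Z}, G')$ to finiteness of $\H^1(\wh{\Z}, G'')$. Since $G''$ has finite index in $G'$, there is a natural map of cohomology sets $\H^1(\wh{\Z}, G'') \to \H^1(\wh{\Z}, G')$; I would like to argue that this map has finite fibers, so that finiteness of the target forces finiteness of the source. The fibers of such a restriction-type map are controlled by the finitely many cosets of $G''$ in $G'$, so I expect each fiber to be finite, giving finiteness of $\H^1(\wh{\Z}, G'')$.

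With $\H^1(\wh{\Z}, G'')$ known to be finite, the second step handles the surjection $G \to G''$ with finite kernel, call it $F$. Here I would use the long exact sequence (of pointed sets) associated to the short exact sequence $1 \to F \to G \to G'' \to 1$ of $\wh{\Z}$-groups. Twisting by cocycles, the fiber of the map $\H^1(\wh{\Z}, G) \to \H^1(\wh{\Z}, G'')$ over any class is a quotient of $\H^1$ of a twisted form of $F$ (or is controlled by such an $\H^1$ together with an $\H^0$ action). Since $F$ is finite, each twisted form $F'$ is finite, and $\H^1(\wh{\Z}, F')$ is then a finite set. Thus every fiber of $\H^1(\wh{\Z}, G) \to \H^1(\wh{\Z}, G'')$ is finite, and since the base is finite, $\H^1(\wh{\Z}, G)$ is finite.

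The main technical obstacle I anticipate is keeping the nonabelian cohomology bookkeeping correct, since we are dealing with pointed sets rather than groups and the ``exact sequence'' only gives information about the fiber over the distinguished point. The standard remedy is \emph{twisting}: to understand an arbitrary fiber of $\H^1(\wh{\Z}, G) \to \H^1(\wh{\Z}, G'')$, one twists the whole short exact sequence by a representing cocycle, which replaces $F$, $G$, $G''$ by inner forms $F', G', G''$ and moves the fiber in question to the fiber over the distinguished point of the twisted sequence. Because the twisted $F'$ is still a finite group (twisting preserves cardinality) and the twisted $G''$ has the same finiteness properties as $G''$, the argument applies uniformly to every fiber. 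I would carry out this twisting argument explicitly only to the extent needed to conclude that each fiber of each map is finite; the two finite-fiber statements, composed, yield the result.

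I expect the delicate point to be verifying that the map $\H^1(\wh{\Z}, G'') \to \H^1(\wh{\Z}, G')$ induced by a finite-index inclusion genuinely has finite fibers. One way to see this is via the explicit cocycle description given just before the lemma: a cocycle is an element $x \in G''$ with $x x^{\phi} \cdots x^{\phi^{n-1}} = 1$ for sufficiently divisible $n$, and two such are cohomologous in $G'$ precisely when they differ by $x \mapsto h^{-1} x h^{\phi}$ with $h \in G'$. Fixing the $G'$-cohomology class and asking how many $G''$-classes map to it amounts to counting $G''$-orbits inside a single $G'$-orbit, which is governed by the finite quotient $G'/G''$; hence the fiber is finite.
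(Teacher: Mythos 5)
Your proposal is correct, but it is organized quite differently from the paper's proof. The paper proves the lemma in one direct stroke, without factoring $f$ or invoking twisting: it fixes cocycle representatives $x_1,\dots,x_n$ for $\H^1(\wh{\Z},G')$ and right-coset representatives $y_1,\dots,y_j$ for $f(G)$ in $G'$, and then, given a cocycle $x\in G$, writes $h^{-1}f(x)h^{\phi}=x_i$ with $h=f(z)y_j$ to replace $x$ by the cohomologous cocycle $z^{-1}xz^{\phi}$ satisfying $f(z^{-1}xz^{\phi})=y_jx_i(y_j^{\phi})^{-1}$; finiteness of $\ker f$ then shows that the finitely many preimages of the finitely many elements $y_jx_i(y_j^{\phi})^{-1}$ contain representatives of every class. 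You instead split $f$ into the surjection $G\to G''=f(G)$ with finite kernel and the finite-index inclusion $G''\subset G'$, handling the first via the exact sequence of pointed sets and twisting (each fiber of $\H^1(\wh{\Z},G)\to\H^1(\wh{\Z},G'')$ is dominated by $\H^1$ of a twisted form of the finite kernel, hence finite, since a continuous cocycle valued in a finite group is determined by its value at $\phi$), and the second by noting that the $G''$-classes inside a single $G'$-twisted-conjugacy class are parametrized by double cosets $Z\backslash G'/G''$ with $Z$ the twisted centralizer, hence number at most $[G':G'']$. Both arguments are sound and rest on the same combinatorial content (cosets of the image times classes downstairs, then kernel finiteness), but the paper's bare-hands manipulation of representatives avoids the twisting formalism entirely and is shorter, whereas your dévissage is the standard Serre-style machinery and has the advantage of being modular: each of your two fiber-finiteness lemmas is a reusable general statement, and the argument would apply verbatim with $\wh{\Z}$ replaced by any profinite group for which the relevant $\H^1$'s of finite twisted coefficient groups are finite. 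One small caution if you write this up: in your twisting step the fiber over a class $\gamma''\in\H^1(\wh{\Z},G'')$ must first be assumed nonempty so that you can choose a cocycle $c\in Z^1(\wh{\Z},G)$ with $f_*[c]=\gamma''$ to twist by, and your notation momentarily reuses $G'$ for a twisted inner form of $G$, which should be renamed to avoid collision with the ambient group.
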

\begin{proof}
Let $x_1,\ldots,x_n\in G'$ be cocycles representing the elements of $\H^1(\wh{\Z},G')$ and $y_1,\ldots,y_j\in G'$ representatives for the right cosets of $f(G)$ in $G'$.
Given a cocycle $x\in G$, there is $i$ and $h\in G'$ such that
$$h^{-1}f(x)h^\phi=x_i.$$
Choosing $j$ and $z\in G$ such that $h=f(z)y_j$ yields
$$y_j^{-1}f(z)^{-1}f(x)f(z)^\phi y_j^\phi=x_i,$$
whence
$$f(z^{-1}xz^\phi)=y_jx_i(y_j^\phi)^{-1}.$$
Since $f$ has finite kernel, each element $y_jx_i(y_j^\phi)^{-1}$ has finitely many preimages in $G$, and we see that the union of this finite set of finite sets contains cocycles representing all of $\H^1(\wh{\Z},G)$, as desired.
\end{proof}

\begin{lem}
Let $G$ be a discrete group and let $\wh{\Z} \to G$ be a continuous homomorphism; regard $\wh{\Z}$ as acting on $G$ by
inner automorphisms via the homomorphism.  Assume that $G$ has only finitely many conjugacy classes of finite order
elements.  Then $H^1(\wh{\Z}, G)$ is finite.
\end{lem}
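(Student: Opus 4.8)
The plan is to exhibit a natural bijection between $\H^1(\wh{\Z}, G)$ and the set of conjugacy classes of finite order elements of $G$; once this is in place, the hypothesis gives the result at once. Write $g_0 \in G$ for the image of the topological generator $\phi$ under the given homomorphism, so that the action may be taken to be $g^{\phi} = g_0^{-1} g g_0$ (the opposite convention is handled identically, replacing $g_0$ by $g_0^{-1}$). The first thing I would record is that $g_0$ has finite order: since $\wh{\Z}$ is profinite and $G$ is discrete, the kernel of $\wh{\Z} \to G$ is open and hence of finite index, so the image is finite. This finiteness of the order of $g_0$ is exactly what drives the rest of the argument.

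The heart of the matter is a telescoping identity. For any $x \in G$ and any $n \geq 1$ I claim
\begin{displaymath}
x \, x^{\phi} \cdots x^{\phi^{n-1}} = (x g_0^{-1})^n\, g_0^{\,n},
\end{displaymath}
which I would prove by a short induction using $x^{\phi^i} = g_0^{-i} x g_0^{\,i}$. Setting $u = x g_0^{-1}$, the cocycle condition (that the left-hand side equals $1$ for all sufficiently divisible $n$) becomes $u^n g_0^{\,n} = 1$ for all sufficiently divisible $n$. Taking $n$ divisible by the order of $g_0$ kills the factor $g_0^{\,n}$, so the condition is equivalent to $u^n = 1$ for all such $n$, i.e.\ to $u$ having finite order; conversely, if $u$ has finite order then $u^n g_0^{\,n} = 1$ whenever $n$ is divisible by both the order of $u$ and the order of $g_0$. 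Thus $x \mapsto u = x g_0^{-1}$ is a bijection from the set of cocycles onto the set of finite order elements of $G$.

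It then remains to match the equivalence relations. Given cocycles $x, y$ with $u = x g_0^{-1}$ and $u' = y g_0^{-1}$, I would substitute $h^{\phi} = g_0^{-1} h g_0$ into the coboundary relation $x = h^{-1} y h^{\phi}$ and simplify, obtaining $u = h^{-1} u' h$; conversely any conjugacy $u = h^{-1} u' h$ unwinds to $x = h^{-1} y h^{\phi}$. Hence $x$ and $y$ are cohomologous precisely when $u$ and $u'$ are conjugate in $G$, and the bijection above descends to a bijection between $\H^1(\wh{\Z}, G)$ and the set of conjugacy classes of finite order elements of $G$. By hypothesis this set is finite, which proves the lemma.

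I expect the genuinely delicate points to be bookkeeping rather than conceptual. Fixing the convention for the action once and for all is essential, since it determines whether the relevant finite order element is $x g_0^{-1}$ or $x g_0$ and controls the signs in both the telescoping identity and the coboundary computation. Likewise, the phrase ``sufficiently divisible $n$'' must be used carefully, so that throughout one is free to pass to multiples of the orders of both $g_0$ and $u$. Neither obstacle is serious, but both are easy to get subtly wrong.
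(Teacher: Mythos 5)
Your proof is correct and takes essentially the same route as the paper: both observe that continuity forces the image of $\phi$ to have finite order, and both show that multiplication by that element (your $x \mapsto x g_0^{-1}$, the paper's $x \mapsto xg$, the discrepancy being only the choice of convention for the conjugation action, which you explicitly flag) carries cocycles bijectively to finite order elements and the coboundary relation to conjugacy, identifying $\H^1(\wh{\Z}, G)$ with the finitely many conjugacy classes of finite order elements. The only difference is that you write out the telescoping identity and the coboundary computation that the paper leaves implicit.
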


\begin{proof}
Let $g$ be the image of $\phi$ under the map $\wh{\Z} \to G$.
Continuity forces $g$ to have finite order.  Because $\wh{\Z}$ acts by conjugation, the cocycle condition for an element $x\in G$ simply amounts to $xg$ having finite order, and $x$ and $y$ are cohomologous if $xg$ and $yg$ are conjugate.  We
thus find that multiplication by $g$ gives a bijection between $\H^1(\wh{\Z}, G)$ and the set of conjugacy classes
of finite order elements of $G$.  This completes the proof.
\end{proof}

We now prove the proposition.

\begin{proof}[Proof of Proposition~\ref{fin-twist}]
Let $N=\NS(X_{\ol{k}})$ and let $N' \subset N_{\R}$ be the nef cone.  Let $G'$ be the group of automorphisms of the
lattice $N$ which map $N'$ to itself, let $G=\Aut_{\ol{k}}(X_{\ol{k}})$, let $G^\circ=\Aut_k(X_{\ol{k}})$, and let $\Gamma=\Aut(\ol{k}/k)
\cong \wh{\Z}$.  Since the natural action of $G^\circ$ on $N$ preserves $N'$, there are homomorphisms
$$\Gamma\to G^\circ\to G'.$$
In addition, conjugation by the image of $\phi$ in $G^\circ$ preserves $G$ and gives the natural Frobenius action on the automorphism group.    Thus, the natural map
$$f:G\to G'$$
is $\wh{\Z}$-equivariant with respect to the natural action on $G$ and the conjugation action on $G'$.
The map $f$ has finite kernel and its image has finite index by \cite[Thm~6.1]{LieblichMaulik}.
Furthermore, $G'$ has finitely many conjugacy classes of finite order elements (see \cite[\S 6]{Totaro}).  The
above two lemmas thus imply that $\H^1(\Gamma, G)$ is finite, which completes the proof.
\end{proof}

\subsection{Proof of Main Theorem (1)}\label{S:main thm 1}

We now complete the proof of the first part of the main theorem.  Let $M_d$ be the stack over $k$ of pairs $(X, L)$
where $X$ is a K3 surface and $L$ is a polarization of degree $d$.  It follows from Artin's representability theorem
that $M_d$ is Deligne--Mumford and locally of finite type over $k$; since the third power of any polarization is
very ample \cite{saint-donat}, the stack is of finite type.  Let $C_5$ be the constant produced by
Proposition~\ref{step3}.  Consider the diagram
\begin{displaymath}
\xymatrix{
{\displaystyle \coprod_{d=1}^{C_5} M_d(K)} \ar[r]^-{\alpha} &
\{ \textrm{isomorphism classes of K3's over $K$} \} \\
& \{ \textrm{isomorphism classes of K3's over $k$ satisfying Tate over $K$} \} \ar[u]_-{\beta} }
\end{displaymath}
By the definition of $C_5$, any element in the image of $\beta$ is also in the image of $\alpha$.  Since the
domain of $\alpha$ is finite, it follows that the image of $\beta$ is finite.  Any two elements of a fiber of
$\beta$ are twisted forms of each other, and so the fibers of $\beta$ are finite by Proposition~\ref{fin-twist}.  We
thus find that the domain of $\beta$ is finite, which completes the proof.

\section{Finiteness implies Tate}

\subsection{Twisted sheaves}

We use the notions and notation from \cite{L1}, \cite{L2}, and the references therein.  Recall the basic definition.  Fix a $\m_r$-gerbe over an algebraic space $\ms Z\to Z$.

\begin{defn}
 A sheaf $\ms F$ of (left) $\ms O_{\ms Z}$-modules is \emph{$\lambda$-fold $\ms Z$-twisted\/} if the natural left inertial $\m_r$-action $\m_r\times\ms F\to\ms F$ of a section $\rho$ of $\m_r$ is scalar multiplication by the $\rho^\lambda$.
\end{defn}

\begin{notation}
 Given a $\m_n$-gerbe $\ms Z\to Z$, write $\D^{\tw}(\ms Z)$ for the derived category of perfect complexes of $\ms Z$-twisted sheaves and $\D^{-\tw}(\ms Z)$ for the derived category of perfect complexes of $(-1)$-fold $\ms Z$-twisted sheaves.
\end{notation}

\subsection{$\ell$-adic $B$-fields}

Let $Z$ be a separated scheme of finite type over the field $k$.  The following is an ``$\ell$-adification'' of a notion familiar from mathematical physics.  For the most part, this rephrases well-known results in a form that aligns them with the literature on twisted Mukai lattices, to be developed $\ell$-adically in the next section.

\begin{defn}\label{D:B-field}
 An \emph{$\ell$-adic $B$-field on $Z$\/} is an element $$B\in\Het^2(Z,\Q_\ell(1)).$$
\end{defn}
We can write any $B$-field as $\alpha/\ell^n$ with $$\alpha\in\Het^2(Z,\Z_\ell(1))$$
a primitive element.  \emph{When we write $B$ in this form we will always assume (unless noted otherwise) that $\alpha$ is primitive\/}.

\begin{defn}
 Given a $B$-field $\alpha/\ell^n$ on $Z$, the \emph{Brauer class associated to $B$\/} is the image of $\alpha$ under the map
 $$\Het^2(Z,\Z_\ell(1))\to\Het^2(Z,\m_{\ell^n})\to\Br(Z)[\ell^n].$$
\end{defn}

\begin{notation}\label{N:n}
 Given $\alpha\in\Het^2(Z,\Z_\ell(1))$ we will write $\alpha_n$ for the image in $\Het^2(Z,\m_{\ell^n})$.  We will use brackets to indicate the map $\Het^2(Z,\m_{\ell^n})\to\Br(X)$.
\end{notation}
Thus, the Brauer class associated to the $B$-field $\alpha/\ell^n$ is written $[\alpha_n]$.

\begin{lem}\label{L:reduction multiplication}
 Given $\alpha\in\Het^2(Z,\Z_\ell(1))$ and positive integers $n, n'$, we have that
 $$\ell^n[\alpha_{n+n'}]=[\alpha_{n'}]\in\Br(Z).$$
\end{lem}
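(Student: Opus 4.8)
The plan is to exhibit both Brauer classes as images of a single class under a commutative square relating the two truncations of $\m$, and then read off the relation by a short diagram chase. The one point that genuinely requires care is the behaviour of the reduction maps: since $\Z_\ell(1)=\varprojlim_m\m_{\ell^m}$ has as transition maps the $\ell$-power maps $\m_{\ell^{m+1}}\to\m_{\ell^m}$, $\zeta\mapsto\zeta^\ell$, the projection $\Z_\ell(1)\to\m_{\ell^{n'}}$ factors as $\Z_\ell(1)\to\m_{\ell^{n+n'}}\xrightarrow{(\cdot)^{\ell^n}}\m_{\ell^{n'}}$, where the last arrow is the $\ell^n$-power map. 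First I would record the consequence on cohomology: applying $\Het^2(Z,-)$ and evaluating on $\alpha$ yields the identity $\alpha_{n'}=(\cdot)^{\ell^n}_*(\alpha_{n+n'})$ in $\Het^2(Z,\m_{\ell^{n'}})$. This is the substantive input; everything else is formal.

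Next I would set up the commutative square of sheaves
\[
\xymatrix{
\m_{\ell^{n+n'}} \ar[r] \ar[d]_{(\cdot)^{\ell^n}} & \G_m \ar[d]^{(\cdot)^{\ell^n}} \\
\m_{\ell^{n'}} \ar[r] & \G_m
}
\]
whose horizontal arrows are the inclusions $\m_{\ell^m}\hookrightarrow\G_m$ defining the bracket maps. Commutativity is immediate, since $\ell^n$-th powering commutes with these inclusions. (This square is the leftmost square of the evident morphism of Kummer sequences, with vertical maps $(\cdot)^{\ell^n}$, $(\cdot)^{\ell^n}$, $\id$ from level $n+n'$ to level $n'$; displaying the full morphism would simply confirm that all the maps in sight are compatible, but only this square is needed.)

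Then I would apply $\Het^2(Z,-)$ to the square and post-compose with the natural homomorphism $\Het^2(Z,\G_m)\to\Br(Z)$, turning the horizontal arrows into the bracket maps $[\cdot]$. Here is the crux: the $\ell^n$-power endomorphism of the abelian sheaf $\G_m$ \emph{is} multiplication by $\ell^n$, so by functoriality it induces multiplication by $\ell^n$ on $\Het^2(Z,\G_m)$, and hence (as $\Het^2(Z,\G_m)\to\Br(Z)$ is a homomorphism) on $\Br(Z)$; this is the right-hand vertical arrow. Chasing $\alpha_{n+n'}$ around the resulting square now finishes the argument: going down then right gives $[(\cdot)^{\ell^n}_*(\alpha_{n+n'})]=[\alpha_{n'}]$ by the identity from the first paragraph, while going right then down gives $\ell^n[\alpha_{n+n'}]$, so commutativity forces $\ell^n[\alpha_{n+n'}]=[\alpha_{n'}]$. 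I do not expect a real obstacle: the entire content is the bookkeeping of the $\ell$-power transition maps in $\Z_\ell(1)$ together with the observation that powering on $\G_m$ becomes multiplication on Brauer classes.
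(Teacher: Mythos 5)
Your proof is correct and follows essentially the same route as the paper: both rest on the (left square of the) morphism of Kummer sequences from level $\ell^{n+n'}$ to level $\ell^{n'}$, identifying the $\ell^n$-power map on $\m$-coefficients with the transition map sending $\alpha_{n+n'}$ to $\alpha_{n'}$, while the $\ell^n$-power endomorphism of $\G_m$ induces multiplication by $\ell^n$ on $\Het^2(Z,\G_m)$ and hence on $\Br(Z)$. Your explicit factorization of the projection $\Z_\ell(1)\to\m_{\ell^{n'}}$ through $\m_{\ell^{n+n'}}$ just spells out the identification the paper states in one line.
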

\begin{proof}
 Consider the commutative diagram of Kummer sequences
 $$\xymatrix{1\ar[r] & \m_{\ell^{n+n'}}\ar[r]\ar[d]_{\ell^n} & \G_m\ar[r]^{\ell^{n+n'}}\ar[d]_{\ell^n} & \G_m\ar[r]\ar[d]^{\id} & 1\\
 1\ar[r] & \m_{\ell^{n'}}\ar[r] & \G_m\ar[r]^{\ell^{n'}} & \G_m\ar[r] & 1.}$$
The induced map
$$\ell^n:\Het^2(Z,\m_{\ell^{n+n'}})\to\Het^2(Z,\m_{\ell^{n'}})$$
is identified with the reduction map $$\Het^2(Z,\Z_\ell(1)\tensor\Z/\ell^{n+n'}\Z)\to\Het^2(Z,\Z_\ell(1)\tensor\Z/\ell^{n'}\Z),$$
so it sends $\alpha_{n+n'}$ to $\alpha_{n'}$.  On the other hand, $\ell^n$ acts by multiplication by $\ell^n$ on $\Het^2(Z,\G_m)$.  The result follows from the resulting diagram of cohomology groups.
\end{proof}

\begin{lem}
 The Brauer classes associated to $\ell$-adic $B$-fields on $X$ form a subgroup $$\Br_\ell^B(Z)\subset\Br(Z)(\ell)$$ of the $\ell$-primary part of the Brauer group of $Z$.
\end{lem}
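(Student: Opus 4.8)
The plan is to realize the group of $\ell$-adic $B$-fields as $\Het^2(Z,\Q_\ell(1))$ under addition and to exhibit the passage to Brauer classes as a group homomorphism
$$\psi\colon\Het^2(Z,\Q_\ell(1))\longrightarrow\Br(Z)(\ell);$$
then $\Br_\ell^B(Z)$ is the image of $\psi$, hence a subgroup, and it lands in the $\ell$-primary part because each value $[\alpha_n]$ is killed by $\ell^n$.

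I would construct $\psi$ as a composite of two manifest homomorphisms. First, the coefficient sequence $0\to\Z_\ell(1)\to\Q_\ell(1)\to(\Q_\ell/\Z_\ell)(1)\to0$ induces on $\Het^2$ a homomorphism $r\colon\Het^2(Z,\Q_\ell(1))\to\Het^2(Z,(\Q_\ell/\Z_\ell)(1))$. Since $Z$ is separated and of finite type over $k$, hence quasi-compact, \'etale cohomology commutes with the filtered colimit $(\Q_\ell/\Z_\ell)(1)=\varinjlim_n\m_{\ell^n}$, so the target is $\varinjlim_n\Het^2(Z,\m_{\ell^n})$. Second, the Kummer maps $\Het^2(Z,\m_{\ell^n})\to\Br(Z)[\ell^n]$ are compatible with the transition maps and therefore assemble into a homomorphism $\iota\colon\varinjlim_n\Het^2(Z,\m_{\ell^n})\to\Br(Z)(\ell)$. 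I set $\psi=\iota\circ r$.

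Next I would check that $\psi$ recovers the Brauer class of the definition. For a $B$-field written as $\alpha/\ell^n$, the induced map $r$ sends it to the image of the reduction $\alpha_n\in\Het^2(Z,\m_{\ell^n})$ in the colimit, and $\iota$ then returns the Kummer image $[\alpha_n]$; so $\psi(\alpha/\ell^n)=[\alpha_n]$, which is exactly the associated Brauer class. Applying this to the primitive representative of each $B$-field shows that the image of $\psi$ is precisely $\Br_\ell^B(Z)$. Being defined through the induced map $r$, the function $\psi$ is automatically independent of the chosen representative $\alpha/\ell^n$; the content of Lemma~\ref{L:reduction multiplication} is exactly this independence made explicit, since clearing to a common denominator replaces $\alpha/\ell^n$ by $\ell^{n'}\alpha/\ell^{n+n'}$ and the lemma records that $[(\ell^{n'}\alpha)_{n+n'}]=\ell^{n'}[\alpha_{n+n'}]=[\alpha_n]$.

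The step I expect to require the most care is this identification, together with the attendant well-definedness when $\Het^2(Z,\Z_\ell(1))$ carries torsion: two numerators representing the same $B$-field differ by a torsion class, which dies in $\Het^2(Z,\Q_\ell(1))$, so $r$ forces its Brauer class to vanish even though this is not transparent from the reduction-then-Kummer recipe alone. Packaging the map as the induced homomorphism $r$ handles this uniformly; alternatively, one can argue directly by putting two given $B$-fields over a common denominator $\ell^N$, whereupon additivity of reduction modulo $\ell^N$ and of the Kummer map gives $\psi(B_1+B_2)=[(\alpha'+\beta')_N]=[\alpha'_N]+[\beta'_N]=\psi(B_1)+\psi(B_2)$, with Lemma~\ref{L:reduction multiplication} bridging the non-primitive representatives and the primitive ones used in the definition. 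Either way, once $\psi$ is seen to be a homomorphism the proposition is immediate.
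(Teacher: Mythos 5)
Your proposal is correct, and it reaches the subgroup property by a more structural route than the paper's. The paper argues element-wise: given classes $[\alpha_n]$ and $[\beta_m]$, it uses Lemma~\ref{L:reduction multiplication} to rewrite both over the common denominator $\ell^{n+m}$, sets $\gamma=\ell^m\alpha+\ell^n\beta$, and checks directly that $[\gamma_{n+m}]=[\alpha_n]+[\beta_m]$, so the set is closed under addition. You instead realize $\Br^B_\ell(Z)$ as the image of a single homomorphism $\psi$ out of $\Het^2(Z,\Q_\ell(1))$, which buys you the group axioms all at once and, as you note, makes the independence of the chosen numerator transparent (torsion in $\Het^2(Z,\Z_\ell(1))$ dies in the source of $\psi$ — a point the paper's definition leaves implicit, though it also follows from Lemma~\ref{L:reduction multiplication} as you observe). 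One caveat on your construction of $r$: the ``coefficient sequence'' $0\to\Z_\ell(1)\to\Q_\ell(1)\to(\Q_\ell/\Z_\ell)(1)\to 0$ is not literally a short exact sequence of \'etale sheaves in the sense needed to induce maps on $\Het^2$ directly, since $\Z_\ell(1)$ and $\Q_\ell(1)$ are pro-objects and the paper's $\Het^2(Z,\Q_\ell(1))$ means $\Het^2(Z,\Z_\ell(1))\otimes\Q_\ell$; the clean fix is to tensor $0\to\Z_\ell\to\Q_\ell\to\Q_\ell/\Z_\ell\to 0$ with the $\Z_\ell$-module $M=\Het^2(Z,\Z_\ell(1))$ and compose $M\otimes\Q_\ell\to M\otimes\Q_\ell/\Z_\ell=\varinjlim_n M\otimes\Z/\ell^n$ with the natural maps $M\otimes\Z/\ell^n\to\Het^2(Z,\m_{\ell^n})$, which are compatible with the multiplication-by-$\ell$ transition maps. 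That compatibility is precisely the content of Lemma~\ref{L:reduction multiplication}, so the two proofs rest on the same key input; indeed your fallback ``clear denominators'' argument in the final paragraph is verbatim the paper's proof, while your main packaging is a genuine (and slightly more informative) alternative.
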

\begin{proof}
 Let $\alpha/\ell^n$ and $\beta/\ell^m$ be $B$-fields with Brauer classes $[\alpha_n]$ and $[\beta_m]$.  By Lemma \ref{L:reduction multiplication} we have that $[\alpha_n]=\ell^m[\alpha_{n+m}]$ and $[\beta_m]=\ell^n[\beta_{n+m}]$.  Let $\gamma=\ell^m\alpha+\ell^n\beta$.  We have that
 $$[\gamma_{n+m}]=\ell^m[\alpha_{n+m}]+\ell^n[\beta_{n+m}]=[\alpha_n]+[\beta_m],$$
 as desired.
\end{proof}

 Given a smooth projective geometrically connected algebraic surface $X$ over $k$, the intersection pairing defines a map
 $$\Het^2(X,\Z_\ell(1))\times\Het^2(X,\Z_\ell(1))\to\Het^4(X_{\ol{k}},\Z_\ell(2))=\Z_\ell.$$
 There is a cycle class map
 $$\Pic(X)\tensor\Z_\ell\to\Het^2(X,\Z_\ell(1)).$$
 Write $P(X,\Z_\ell)$ for its image, which is a $\Z_\ell$-sublattice.

\begin{defn}
 The \emph{$\ell$-adic transcendental lattice of $X$\/} is $$T(X,\Z_\ell):=P(X,\Z_\ell)^{\perp}\subset\Het^2(X,\Z_\ell(1))$$
\end{defn}

\begin{lem}\label{L:computation of Br}
The map
\begin{displaymath}
\beta \colon T(X,\Z_\ell)\tensor\Q_\ell/\Z_\ell \to \Br^B_\ell(X), \qquad \alpha \otimes (1/\ell^n) \mapsto [\alpha_n]
\end{displaymath}
is surjective with finite kernel.  It is an isomorphism if $\ell \nmid \disc(\Pic(X))$.
\end{lem}

\begin{proof}
 The map $\beta$ extends to a map \[
	 \beta:\Het^2(X,\Z_\ell(1))\tensor\Q_\ell\to\Br_\ell(X)
	 \] by sending $\alpha/\ell^n$ to the element $[\alpha_n]$ as above. One easily checks that this map is well-defined, and it is surjective by the definition of $\Br_\ell(X)$. Moreover, by construction \[P(X,\Z_\ell)\tensor\Q_\ell+\Het^2(X,\Z_\ell(1))\] lies in the kernel of $\beta$. Since \[
		 \Het^2(X,\Z_\ell(1))\tensor\Q_\ell=P(X,\Z_\ell)\tensor\Q_\ell\oplus T(X,\Z_\ell)\tensor\Q_\ell,
		 \] we see that the map defined in the statement of the Lemma is surjective. It remains to prove the assertions about its kernel.
 
Suppose $\alpha/\ell^n$ maps to $0$ in $\Br(X)$.  We have that $[\alpha_n]=0$, so that $$\alpha_n\in\Pic(X)/\ell^n\Pic(X)\subset\Het^2(X,\m_{\ell^n}).$$
Taking the cohomology of the exact sequence
$$0\to\Z_\ell(1)\to\Z_\ell(1)\to\m_{\ell^n}\to 0,$$
we see that $$\alpha\in P(X, \Z_{\ell}) + \ell^n\Het^2(X,\Z_\ell(1)).$$
Since $H^2(X, \Z_{\ell}(1))$ is unimodular, $P(X, \Z_{\ell}) \oplus T(X, \Z_{\ell})$ contains $\ell^{\nu} H^2(X, \Z_{\ell}(1))$, where $\nu$ is the $\ell$-adic valuation of $\disc(\Pic(X))$.  It follows that
\begin{displaymath}
(P(X, \Z_{\ell})+\ell^n H^2_{\et}(X, \Z_{\ell}(1))) \cap T(X, \Z_{\ell}) \subset
\ell^{n-\nu} T(X, \Z_{\ell}).
\end{displaymath}
We thus see that $\alpha \in \ell^{n-\nu} T(X, \Z_{\ell})$, and so $\ell^{\nu}$ kills $\alpha/\ell^n$ as an element of $T(X, \Z_{\ell}) \otimes \Q_{\ell}/\Z_{\ell}$.  It follows that the kernel of $\beta$ is contained in $T(X, \Z_{\ell}) \otimes \frac{1}{\ell^{\nu}} \Z_{\ell}/\Z_{\ell}$, which is always finite and is zero when $\nu=0$.
\end{proof}

\begin{prop}\label{P:B-fields explain it}
 If $X$ is a smooth projective surface over the finite field $k$ of characteristic $p$ then the following are equivalent.
\begin{enumerate}
 \item $\Br(X)$ is infinite
 \item $\Br^B_\ell(X)\neq 0$ for all $\ell$
 \item $\Br^B_\ell(X)\neq 0$ for some $\ell$
 \item $T(X,\Z_\ell)\neq 0$
\end{enumerate}
\end{prop}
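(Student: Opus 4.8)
The plan is to route everything through Lemma~\ref{L:computation of Br}, which provides a canonical isomorphism $\Br^B_\ell(X)\cong T(X,\Z_\ell)\otimes\Q_\ell/\Z_\ell$, so that the proposition becomes a comparison between the transcendental lattices and the full Brauer group. Since $T(X,\Z_\ell)$ is a finite free $\Z_\ell$-module, it is nonzero precisely when $T(X,\Z_\ell)\otimes\Q_\ell/\Z_\ell\cong(\Q_\ell/\Z_\ell)^{\rk T(X,\Z_\ell)}$ is nonzero; this yields at once the $\ell$-by-$\ell$ equivalence (3)$\Leftrightarrow$(4). Moreover, when either holds, $\Br^B_\ell(X)$ is an infinite subgroup of $\Br(X)$, which gives (3)$\Rightarrow$(1). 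The implication (2)$\Rightarrow$(3) is trivial.

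For (4)$\Rightarrow$(2) I would show that $\rk_{\Z_\ell}T(X,\Z_\ell)$ is independent of $\ell$. Non-degeneracy of the intersection pairing together with $T(X,\Z_\ell)=P(X,\Z_\ell)^\perp$ gives $\rk T(X,\Z_\ell)=\rk\Het^2(X,\Z_\ell(1))-\rk P(X,\Z_\ell)$. The second term equals $\rk_\Z\NS(X)$ for every $\ell$ (over a finite field $\Pic^0(X)$ is finite, so the cycle class image has rank $\rk_\Z\NS(X)$), while the first is governed by Frobenius eigenvalues and is independent of $\ell$ because the characteristic polynomial of Frobenius has $\Z$-coefficients independent of $\ell$. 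Hence positivity of $\rk T(X,\Z_\ell)$ for one $\ell$ forces it for all $\ell$, which is (2).

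The one substantial implication is (1)$\Rightarrow$(4), which I would prove by contraposition: assuming $T(X,\Z_\ell)=0$ for all $\ell$, I would deduce that $\Br(X)$ is finite. Fix a prime $\ell\neq p$. Then $P(X,\Z_\ell)$ has full rank in $\Het^2(X,\Z_\ell(1))$, so $Q_\ell:=\Het^2(X,\Z_\ell(1))/P(X,\Z_\ell)$ is finite. Comparing the Kummer sequence $0\to\Pic(X)/\ell^n\to\Het^2(X,\m_{\ell^n})\to\Br(X)[\ell^n]\to 0$ with the Bockstein sequence $0\to\Het^2(X,\Z_\ell(1))/\ell^n\to\Het^2(X,\m_{\ell^n})\to\Het^3(X,\Z_\ell(1))[\ell^n]\to 0$ exhibits $\Br(X)[\ell^n]$ as an extension whose subquotients are $Q_\ell/\ell^n$ and $\Het^3(X,\Z_\ell(1))[\ell^n]$, each bounded independently of $n$ (by $\#Q_\ell$ and by the order of the finite torsion subgroup of $\Het^3(X,\Z_\ell(1))$, respectively). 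Thus $\Br(X)(\ell)$ is finite.

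It then remains to pass from finiteness of each $\ell$-primary part to finiteness of $\Br(X)=\bigoplus_\ell\Br(X)(\ell)$, and this is the step I expect to be the main obstacle. For all but finitely many $\ell$ the group $\Het^2(X,\Z_\ell(1))$ is torsion-free and $P(X,\Z_\ell)$ is saturated of full rank — an integral refinement of the rank computation above, proved by the same discriminant estimates used in Proposition~\ref{step1a} — so that $Q_\ell=0$ and $\Het^3(X,\Z_\ell(1))$ is torsion-free, whence $\Br(X)(\ell)=0$; combined with the local finiteness just established and with finiteness of the $p$-primary part (which follows from the crystalline analogue of these estimates developed in \S\ref{sec2}), this gives finiteness of $\Br(X)$. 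The delicate points are exactly this uniform control of all primes simultaneously and the separate treatment of the characteristic $p$ part, rather than the purely formal local finiteness furnished by the Kummer and Bockstein sequences.
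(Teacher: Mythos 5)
Your fixed-$\ell$ reasoning is sound: the equivalence (3)$\Leftrightarrow$(4) via Lemma~\ref{L:computation of Br}, the implication (3)$\Rightarrow$(1), and the Kummer/Bockstein argument showing that $T(X,\Z_\ell)=0$ forces $\Br(X)(\ell)$ to be finite are all correct (the last is Tate's classical argument). But the two places you yourself flag as needing work are genuine gaps, and they cannot be filled the way you suggest. First, in (4)$\Rightarrow$(2), the $\ell$-independence of $\rk T(X,\Z_\ell)$ does \emph{not} follow from the $\ell$-independence of the characteristic polynomial of Frobenius: $\rk\Het^2(X,\Z_\ell(1))$ is the dimension of the actual eigenspace $\ker(\phi-q)$ on $\Het^2(X_{\ol k},\Q_\ell)$, while the characteristic polynomial only controls the generalized eigenspace. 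These agree exactly when $q$ is a semi-simple eigenvalue, which is Deligne's theorem \cite{D} for K3 surfaces but is open for general smooth projective surfaces --- note that Proposition~\ref{discrim} explicitly carries this semi-simplicity as a hypothesis, and Proposition~\ref{step1a} is stated only for K3s. (Semi-simplicity at $q$ does follow from the Tate conjecture at one $\ell$ via the nondegeneracy of the intersection form on $\NS$, but that routes you through precisely the Artin--Tate machinery you are trying to replace. Relatedly, your appeal to non-degeneracy of the pairing on the arithmetic $\Het^2(X,\Z_\ell(1))$ is unjustified --- it is essentially equivalent to this semi-simplicity --- though your rank formula survives since it only needs non-degeneracy on $P(X,\Z_\ell)$.)

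The more serious gap is the $p$-primary part of (1)$\Rightarrow$(4). There is no ``crystalline analogue'' of your Kummer/Bockstein argument available from \S\ref{sec2}: the crystalline estimates there bound discriminants of N\'eron--Severi-type lattices and give no handle whatsoever on $\Br(X)(p)$, whose relation to cohomology in characteristic $p$ goes through fppf cohomology of $\m_{p^n}$ and Milne's flat duality, not through $\Hcris^2$. The statement that finiteness of $\Br(X)(\ell)$ for a single $\ell$ implies finiteness of all of $\Br(X)$ (including the $p$-part) is a deep theorem of Tate--Milne as completed in \cite{LLR}, and it is exactly what the paper invokes: the paper's proof opens by citing \cite{LLR} for ``$\Br(X)$ infinite $\Leftrightarrow$ $\Br(X)(\ell)$ infinite for one $\ell$ $\Leftrightarrow$ for all $\ell$,'' and then the only new content it proves is that an infinite $\Br(X)(\ell)$ produces a nonzero class in $\Br^B_\ell(X)$, via a compactness argument lifting a sequence of classes through the finite groups $\Het^2(X,\m_{\ell^n})$ to an element of $\Het^2(X,\Z_\ell(1))$. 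Your proposal replaces this citation with a sketch that, at the prime $p$ and in the uniformity over $\ell$, silently assumes the hard input; as written it does not close the cycle of implications.
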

\begin{proof}
By  \cite{LLR}, $\Br(X)$ is infinite if and only if $\Br(X)(\ell)$ is infinite for one $\ell$ if and only if $\Br(X)(\ell)$ is infinite for all $\ell$.  To prove the Proposition it suffices to prove that $\Br^B_\ell(X)\neq 0$ if $\Br(X)(\ell)$ is infinite.  So suppose $\alpha_n\in\Br(X)(\ell^n)$ is a sequence of classes.  Choose lifts
$$\widetilde\alpha_n\in\Het^2(X,\m_{\ell^n})$$ for each $n$.  Let $\beta_0=0\in\Het^2(X,\m_{\ell^0})$.  Assume we have constructed $\beta_1,\ldots,\beta_m$, $\beta_i\in\Het^2(X,\m_{\ell^i})$ such that $\ell\beta_{i+1}=\beta_i$ and $\ell^{j}\widetilde\alpha_{m+j}=\beta_m$ for all $j>0$.  The group $\Het^2(X,\m_{\ell^{m+1}})$ is finite, so there is one element $\beta_{m+1}$ that is a multiple of infinitely many $\widetilde\alpha_n$ such that $\ell\beta_{m+1}=\beta_m$.  Replacing the sequence of $\widetilde\alpha_n$ with the subsequence mapping to $\beta_{m+1}$, we see that we can proceed by induction, yielding an element
$$\beta\in\Het^2(X,\Z_\ell(1))$$
giving infinitely many distinct elements of $\Br^B_\ell(X)$.
\end{proof}


\subsection{Twisted $\ell$-adic Mukai lattices}

Fix a K3 surface $X$ over the field $k$ and an element $\alpha\in T(X,\Z_\ell)$.  Fix a $B$-field $\alpha/r$ with $r=\ell^n$ for some $n$.  Note that because $X$ is simply connected it follows from the Leray spectral sequence that $$\H^4(X,\Z_\ell(2))=\H^4(X_{\ol{k}},\Z_\ell(2))=\Z_\ell.$$

\begin{defn}
 The $\ell$-adic Mukai lattice of $X$ is the free $\Z_\ell$-module
 $$\Het(X,\Z_\ell):=\Het^0(X,\Z_\ell)\oplus\Het^2(X,\Z_\ell(1))\oplus\Het^4(X,\Z_\ell(2))$$
 with the intersection pairing
 $$(a,b,c)\cdot(a',b',c')=bb'-ac'-a'c\in\Het^4(X,\Z_\ell(2))=\Z_\ell.$$
\end{defn}
The algebraic part of the cohomology gives a sublattice
$$\Chow(X,\Z_\ell)=\Z_\ell\oplus P(X,\Z_\ell)\oplus\Z_\ell.$$  It is easy to see that $$\Chow(X,\Z_\ell)^\perp=T(X,\Z_\ell),$$ as sublattices of $\Het(X,\Z_\ell)$.  We will write $\Het(X,\Q_\ell)$ (resp.\ $\Chow(X,\Q_\ell)$, resp.\ $T(X,\Q_\ell)$) for $\Het(X,\Z_\ell)\tensor_{\Z_\ell}\Q_\ell$ (resp.\ $\Chow(X,\Z_\ell)\tensor\Q_\ell)$, resp.\ $T(X,\Z_\ell)\tensor\Q_\ell$).  The lattice $\Chow(X,\Z_\ell)$ has an integral structure: $$\Chow(X,\Z)=\Z\oplus\Pic(X)\oplus\Z$$ such that $\Chow(X,\Z)\tensor\Z_\ell=\Chow(X,\Z_\ell)$. We will also write $\Chow(X,\Q)$ for $\Chow(X,\Z)\tensor\Q$.

Following \cite{Y}, we consider the map
$$T_{-\alpha/r}:\Het(X,\Z_\ell)\to\Het(X,\Q_\ell)$$
that sends $x$ to the cup product $x\cup e^{-\alpha/r}$.

\begin{defn}
 The \emph{$\alpha/r$-twisted Chow lattice of $X$\/} is $$\Chow^{\alpha/r}(X,\Z_\ell):=(T_{-\alpha/r})^{-1}(\Chow(X,\Q_\ell)).$$
\end{defn}

There is also an integral structure on the twisted Chow lattice.

\begin{lem}
	\label{L:integral twisted Chow}
	Suppose $\alpha$ is primitive. Let \[
		\Chow^{\alpha/r}(X,\Z)=\left\{\left(ar,D+a\alpha,c\right)\ |\ a,c\in\Z, D\in \Pic(X)\right\}\subset T_{-\alpha/r}^{-1}(\Chow(X,\Q))\subset\Chow^{\alpha/r}(X,\Z_\ell).\] The natural map 
\[
	\Chow^{\alpha/r}(X,\Z)\tensor\Z_\ell\to\Chow^{\alpha/r}(X,\Z_\ell)
	\] is an isomorphism.
\end{lem}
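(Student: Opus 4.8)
The plan is to compute $T_{-\alpha/r}$ explicitly, read off a concrete description of $\Chow^{\alpha/r}(X,\Z_\ell)$, and then match it against the proposed integral lattice. Set $H:=\Het^2(X,\Z_\ell(1))$ and $P:=P(X,\Z_\ell)$. Writing $e^{-\alpha/r}=\bigl(1,\,-\alpha/r,\,(\alpha\cdot\alpha)/2r^2\bigr)$ and using that cup product with a degree-$0$ class is multiplication, one gets
\[
T_{-\alpha/r}(a,b,c)=\Bigl(a,\ b-\tfrac{a}{r}\alpha,\ c-\tfrac{1}{r}(b\cdot\alpha)+\tfrac{a}{2r^2}(\alpha\cdot\alpha)\Bigr).
\]
Since the first and third entries land in $\Q_\ell$ automatically, the only condition for lying in $(T_{-\alpha/r})^{-1}(\Chow(X,\Q_\ell))$ is that the middle entry lie in $P\otimes\Q_\ell$, i.e.
\[
\Chow^{\alpha/r}(X,\Z_\ell)=\bigl\{(a,b,c)\in\Het(X,\Z_\ell)\ :\ b-\tfrac{a}{r}\alpha\in P\otimes\Q_\ell\bigr\}.
\]
The containment $\supseteq$ is then immediate: for a generator $(ar,D+a\alpha,c)$ the middle entry becomes $(D+a\alpha)-\tfrac{ar}{r}\alpha=D\in P$, using $\alpha\in T(X,\Z_\ell)=P^\perp$. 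The content of the lemma is the reverse containment.

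For the reverse direction I would first record two facts about $H$. First, $P$ is saturated in $H$: the cycle-class image of $\NS(X_{\ol k})\otimes\Z_\ell$ is saturated in the geometric cohomology (as in the proof of Lemma~\ref{tate-ell}), and the Frobenius-invariants of a saturated sublattice are again saturated, so $P$ is saturated. Second, the image $\ol\alpha$ of $\alpha$ in the free module $H/P$ is primitive. This is exactly the statement that the Brauer class $[\alpha_n]$ has order $r$, which follows from Lemma~\ref{L:computation of Br}: under the isomorphism $T(X,\Z_\ell)\otimes\Q_\ell/\Z_\ell\simto\Br^B_\ell(X)$ the class $[\alpha_n]$ corresponds to $\alpha\otimes(1/r)$, and this has order exactly $r$ because $\alpha$ is primitive in the saturated sublattice $T(X,\Z_\ell)$.

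Granting these, the argument is clean. Because $P$ is saturated and $\ol\alpha$ is primitive in $H/P$, I may choose a free complement $Q\subset H$ with $H=P\oplus Q$ and with $\alpha$ itself part of a basis of $Q$. Now take any $(a,b,c)\in\Chow^{\alpha/r}(X,\Z_\ell)$ and write $b=p+q$ with $p\in P$, $q\in Q$. The membership condition gives $q-\tfrac{a}{r}\alpha=(b-\tfrac{a}{r}\alpha)-p\in(P\otimes\Q_\ell)\cap(Q\otimes\Q_\ell)=0$, so $q=\tfrac{a}{r}\alpha$. Since $q$ is integral and $\alpha$ is a basis vector of $Q$, the coefficient $a/r$ lies in $\Z_\ell$, i.e.\ $r\mid a$; and then $b-\tfrac{a}{r}\alpha=p\in P$. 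Writing $p$ as the cycle class of some $D\in\Pic(X)$ (the map $\Pic(X)\otimes\Z_\ell\to P$ is an isomorphism, as $\NS(X)$ is torsion free) exhibits $(a,b,c)=\bigl(\tfrac{a}{r}\cdot r,\ D+\tfrac{a}{r}\alpha,\ c\bigr)$ as a generator, which is the reverse containment. Finally the displayed map is injective (the same parametrization recovers $a/r$, $D$, and $c$), so it is an isomorphism onto $\Chow^{\alpha/r}(X,\Z_\ell)$.

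The main obstacle is the divisibility $r\mid a$, equivalently the primitivity of $\ol\alpha$ in $H/P$. This is where the hypothesis that $\alpha$ is primitive is genuinely used: it is \emph{not} a formal consequence of $\alpha$ being primitive in $H$ alone, but must be extracted from the order of the associated Brauer class via Lemma~\ref{L:computation of Br}. Everything else is the explicit exponential computation, the saturation of $P$, and routine lattice bookkeeping.
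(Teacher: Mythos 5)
Your proof is correct, and its computational skeleton coincides with the paper's: both compute $T_{-\alpha/r}(a,b,c)$ explicitly, observe that the only nontrivial membership condition is $b-\tfrac{a}{r}\alpha\in P(X,\Z_\ell)\otimes\Q_\ell$, deduce $r\mid a$ and $b\in a\alpha+P(X,\Z_\ell)$, and handle injectivity by the same basis bookkeeping. Where you genuinely diverge is at the crux. Writing $H=\Het^2(X,\Z_\ell(1))$ and $P=P(X,\Z_\ell)$, the paper dispatches the divisibility $r\mid a$ in one line (``since $\alpha$ is primitive and orthogonal to $\Pic(X)$\dots''), i.e.\ it treats the primitivity of the image $\ol\alpha\in H/P$ as immediate; you correctly identify that this is the real content --- primitivity of $\alpha$ in $H$ together with $\alpha\perp P$ does not formally yield primitivity modulo $P$ (abstractly it can fail when $\ell\mid\disc P$, since $H/P$ is then identified with a dual lattice of $T(X,\Z_\ell)$ rather than with $T(X,\Z_\ell)$ itself) --- and you extract it instead from the injectivity half of Lemma~\ref{L:computation of Br}: the class $[\alpha_n]$ has exact order $\ell^n$, which via Lemma~\ref{L:reduction multiplication} and the Kummer sequence over $X$ is equivalent to $\alpha\notin P+\ell H$, i.e.\ to primitivity of $\ol\alpha$ in the free module $H/P$; your module splitting $H=P\oplus Q$ with $\alpha$ a basis vector of $Q$ then makes the divisibility transparent. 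What each approach buys: the paper's version is shorter and is indeed formal in the situation where the lemma is later applied (there $\ell\nmid\disc\Pic(X)$, so $P$ is unimodular over $\Z_\ell$, splits off orthogonally as $H=P\perp T(X,\Z_\ell)$, and primitivity of $\alpha$ in the saturated sublattice $T(X,\Z_\ell)$ is visibly primitivity mod $P$), while your route makes the role of the hypothesis explicit and works whenever Lemma~\ref{L:computation of Br} does. Two small touch-ups: the saturation of $P$ in $H$ is most directly the observation that $H/P$ injects into the torsion-free $T_\ell\Br(X)$ (the same diagram the paper uses in the proof of Proposition~\ref{P:partner blob}); your detour through invariants of the geometric lattice also works but tacitly uses $\Pic(X)\simeq\NS(X_{\ol k})^{\phi}$, i.e.\ the vanishing of the Brauer obstruction over finite fields, which deserves a citation. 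Likewise the divisor class $D$ recovered at the end lives in $\Pic(X)\otimes\Z_\ell$, as your parenthetical already indicates.
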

\begin{proof}
	A basis for $\Chow^{\alpha/r}(X,\Z)$ is given by $(r,\alpha,0)$, the vectors $(0,b,0)$ where $b$ ranges over a basis for $\Pic(X)$, and $(0,0,1)$. Since $\Pic(X)$ is a lattice, we see that these vectors all remain linearly independent over $\Z_\ell$ (inside $\Het(X,\Z_\ell)$), so that the displayed scalar extension map is injective. To see surjectivity, suppose $(x,y,z)\in\Het(Z,\Z_\ell)$ is an element such that $e^{-\alpha/r}\cup(x,y,z)\in\Chow(X,\Q_\ell)$. Computing the components of the cup product shows that $(x,y,z)$ must satisfy the conditions: $x,y\in\Z_\ell$ (via the natural identifications $\H^0(X,\Z_\ell)=\Z_\ell$ and $\H^4(X,\Z_\ell(2))=\Z_\ell)$ and $-\alpha x/r+y\in\Pic(X)\tensor\Q_\ell$. Since $\alpha$ is primitive and orthogonal to $\Pic(X)$ and $y\in\H^2(X,\Z_\ell)$, we see that we must have $x=ar$ for some $a$ and then $-a\alpha+y\in\Pic(X)\tensor\Z_\ell$, so $y=a\alpha+D$ for some $D\in\Pic(X,\Z_\ell)$. This shows that the displayed map is surjective.
\end{proof}

%

The (integral) twisted Chow lattice is a natural recipient of Chern classes for twisted sheaves.  Let $\pi:\ms X\to X$ be a $\m_r$-gerbe representing the class $-[\alpha_n]$ associated to the $B$-field $-\alpha/r$, $r=\ell^n$.  Suppose $\ms P$ is a perfect complex of $\ms X$-twisted sheaves of positive rank.

\begin{defn}
 The \emph{Chern character of $\ms P$\/} is the unique element
 $$\chern_{\ms X}(\ms P)\in\Chow(X,\Q)$$
 such that $$\rk\chern_{\ms X}(\ms P)=\rk\ms P$$ and $$\chern_{\ms X}(\ms P)^r=\chern\left(\R\pi_\ast(\ms P^{\ltensor r})\right)\in\Chow(X,\Q).$$
\end{defn}

\begin{remark}
A less \emph{ad hoc\/} approach is to define rational Chern classes using a splitting principle,
etc.  This can be done and yields the same result (and works for complexes of rank $0$).
In particular, the Chern character defined here satisfies the Riemann-Roch theorem in the following
sense: given two perfect complexes $\ms P$ and $\ms Q$ of $\ms X$-twisted sheaves, we can compute
$\chi(\shom(\ms P,\ms Q))$ as $\deg(\chern(\ms P^{\vee})\cdot\chern(\ms Q)\cdot\Todd_X)$.
\end{remark}

\begin{defn}\label{D:Mukai vector}
 Given a perfect complex $\ms P$ of $\ms X$-twisted sheaves as above, we define the \emph{twisted Mukai vector of $\ms P$\/} to be
 $$v^{\alpha/r}(\ms P):=e^{\alpha/r}\chern_{\ms X}(\ms P)\sqrt{\Todd_X}.$$
\end{defn}

\begin{lem}\label{L:chern char props}
 Given perfect complexes $\ms P$ and $\ms Q$ of $\ms X$-twisted sheaves as above, the following hold.
\begin{enumerate}
 \item the element $v^{\alpha/r}(\ms P)$ lies in the integral subring $\Chow^{\alpha/r}(X,\Z)$
 \item we have that
 $$\chi(\ms P,\ms Q)=-v^{\alpha/r}(\ms P)\cdot v^{\alpha/r}(\ms Q).$$
\end{enumerate}
\end{lem}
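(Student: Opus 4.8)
The plan is to dispatch (2) first, since it is essentially formal, and then to concentrate on the integrality statement (1), which is where the real work lies. For (2): because $\ms P$ and $\ms Q$ are each (1-fold) $\ms X$-twisted, the complex $\rshom(\ms P,\ms Q)$ is honestly \emph{untwisted} and descends to a perfect complex on $X$ (its two $e^{\pm\alpha/r}$ contributions cancel). Ordinary Hirzebruch--Riemann--Roch on $X$, together with the multiplicativity of the twisted Chern character into $\Chow(X,\Q)$, then yields the formula $\chi(\ms P,\ms Q)=\deg\big(\chern_{\ms X}(\ms P)^\vee\cdot\chern_{\ms X}(\ms Q)\cdot\Todd_X\big)$ recorded in the Remark above. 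To convert this into the Mukai pairing I would use the single observation that cup product with $e^{\alpha/r}$ is an \emph{isometry} of the pairing $(a,b,c)\cdot(a',b',c')=bb'-ac'-a'c$ (the terms involving $(\alpha/r)^2$ cancel), so the $B$-field twist disappears from $v^{\alpha/r}(\ms P)\cdot v^{\alpha/r}(\ms Q)$. With $\Todd_X=(1,0,2)$ and $\sqrt{\Todd_X}=(1,0,1)$ on the K3 surface $X$, a one-line expansion of each side gives $\deg(\chern_{\ms X}(\ms P)^\vee\chern_{\ms X}(\ms Q)\Todd_X)=-v^{\alpha/r}(\ms P)\cdot v^{\alpha/r}(\ms Q)$, as required.

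For (1) I would compute against the explicit $\Z$-basis of $\Chow^{\alpha/r}(X,\Z)$ from Lemma~\ref{L:integral twisted Chow}: the vector $(r,\alpha,0)$, the $(0,b,0)$ with $b$ a basis of $\Pic(X)$, and $(0,0,1)$. Writing $\chern_{\ms X}(\ms P)=(\rho,\xi,\eta)$ with $\rho=\rk\ms P$ and $\xi\in\Pic(X)\otimes\Q$ as forced by the definition of $\chern_{\ms X}$, and using $\xi\cdot\alpha=0$ (since $\alpha\in T(X,\Z_\ell)=P(X,\Z_\ell)^{\perp}$), one finds $v^{\alpha/r}(\ms P)=(\rho,\ \xi+(\rho/r)\alpha,\ \ast)$. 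Membership in $\Chow^{\alpha/r}(X,\Z)$ thus reduces to three integrality claims: $r\mid\rho$, $\xi\in\Pic(X)$, and integrality of the $\H^4$-component $\ast$. The divisibility $r\mid\rho$ I would obtain by first showing the Brauer class $[\alpha_n]$ has period exactly $r$: since $\alpha$ is primitive, Lemmas~\ref{L:reduction multiplication} and~\ref{L:computation of Br} give $\ell^{j}[\alpha_n]=[\alpha_{n-j}]\neq 0$ for $0\le j<n$ while $\ell^{n}[\alpha_n]=0$; then the rank of any twisted sheaf is divisible by the period, which one sees by restricting to the generic point (where $\Br(X)\inj\Br(k(X))$ preserves the period) and using the underlying Azumaya algebra.

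The remaining integrality of $\xi$ and of $\ast$ is the crux, and it is exactly where the formal relation $\chern_{\ms X}(\ms P)^r=\chern(\R\pi_\ast\ms P^{\ltensor r})$ is insufficient: it tells us only that the $r$-th power is integral, which does not pin down the individual graded pieces. For $\xi$ I would argue that, since $r\mid\rho$, the determinant $\det\ms P$ is $\rho$-fold and hence $0$-fold twisted, so it descends to an honest line bundle on $X$; a short computation tracking the gerbe contribution $(\rho/r)\alpha$ identifies the descended class with $\xi$, giving $\xi\in\Pic(X)$. For the $\H^4$-component I would invoke the splitting principle for twisted sheaves alluded to in the Remark (the $\ell$-adic analogue of Yoshioka's twisted Mukai lattice): \'etale-locally the gerbe trivializes and $\ms P$ becomes an ordinary perfect complex with integral Chern classes, the gluing data supplying precisely the $e^{\alpha/r}$ shift, so that these genuinely integral classes reassemble into an element of $\Chow^{\alpha/r}(X,\Z)$ coinciding with the ad hoc Chern character. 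I expect this last step to be the main obstacle: one must develop enough of the splitting principle to know that $\chern_{\ms X}(\ms P)$ is integral in the twisted lattice at \emph{every} prime, not merely $\ell$-adically, and that it agrees with the geometrically defined twisted Chern character.
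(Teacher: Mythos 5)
Part (2) of your proposal is correct and essentially the paper's argument: the paper also reduces $-v^{\alpha/r}(\ms P)\cdot v^{\alpha/r}(\ms Q)$ to $\deg\bigl(\chern_{\ms X}(\ms P^\vee\ltensor\ms Q)\Todd_X\bigr)=\chi\bigl(\R\pi_*\rshom(\ms P,\ms Q)\bigr)$ using that $e^{\alpha/r}$ drops out of the Mukai pairing and that $\rshom(\ms P,\ms Q)$ is untwisted. Likewise, your opening reductions for (1) --- that $[\alpha_n]$ has period exactly $r=\ell^n$ because $\alpha$ is primitive, that the rank of a twisted sheaf is divisible by the period via the generic point, and that $\det\ms P$ therefore descends to $X$ --- all track the paper's parenthetical remarks faithfully.

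The genuine gap is exactly where you flag it, and your proposed repair would not work as sketched. You defer the integrality of $\xi$ and of the $\H^4$-component to an \'etale-local splitting principle: ``\'etale-locally the gerbe trivializes and $\ms P$ becomes an ordinary perfect complex with integral Chern classes, the gluing data supplying precisely the $e^{\alpha/r}$ shift.'' But a $\m_{\ell^n}$-gerbe is \'etale-locally trivial, so \'etale-local data cannot detect the integrality in question; Chern characters do not reassemble from local trivializations (the failure to glue \emph{is} the global Brauer class), and the integrality of the $\H^4$-component is a genuinely global phenomenon --- already for untwisted sheaves on a K3 it rests on the evenness of the intersection form ($\chern_2=c_1^2/2-c_2$ is only half-integral in general), which no local computation sees. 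Moreover the statement requires integrality at \emph{every} prime, while your gerbe-local analysis is at best $\ell$-local, as you yourself note. The paper's actual mechanism, entirely absent from your proposal, is a transfer to characteristic zero: integrality may be checked over $\ol{k}$ and factors through K-theory; since $\det\ms P$ descends, one deforms $X$ along the universal deformation over which an ample $H$ and $\det\ms P$ stay algebraic, so the K-theory class of $\ms P$ deforms; after specializing to $\C$ the integral shape of $v^{\alpha/r}(\ms P)$, matching Lemma~\ref{L:integral twisted Chow}, is supplied by Lemma 3.1, Lemma 3.3, and Remark 3.2 of \cite{Y}. Your ``short computation tracking the gerbe contribution'' identifying the descended determinant with $\xi$, and the compatibility of the ad hoc $\chern_{\ms X}$ with a geometric twisted Chern character, are precisely the items the paper outsources to \cite{Y} over $\C$; without either developing the splitting principle in positive characteristic in earnest or performing such a deformation to $\C$, part (1) of your proof remains incomplete.
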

\begin{proof}
 To see the integrality, we may assume that the base field is algebraically closed. Since the calculation of $v^{\alpha/r}$ factors through numerical $K$-theory, it is enough to show the analogous result for a class in the integral (numerical) $K$-theory of twisted sheaves. In particular, since $X$ satisfies the resolution property \cite[Corollary 2.2.7.21]{L1}, we may assume that $\ms P$ is a locally free $\ms X$-twisted sheaf.

Given a sheaf $\ms F$ on $\ms X$, write $\Def_{X, \ms{F}}$ for the deformation functor parametrizing pairs of a deformation of $X$ and a deformation of $\ms{F}$ over the induced deformation of $\ms{X}$. By \cite[Lemma~5.2.8]{L0}, there is a finite-colength subsheaf $\ms P'\subset\ms P$ such that the trace map
$$\Ext^2(\ms{P}', \ms{P}') \to \H^2(X, \ms{O})$$
is an isomorphism. Applying \cite[Proposition~2.2.4.9]{L1}, this implies that the morphism of formal deformation functors
$$\Def_{X,\ms{P}'} \to \Def_{X,\det,\ms{P}'}$$
is formally smooth. In particular, $\ms{P}'$ is unobstructed on any infinitesimal deformation over which its determinant deforms. Since $\ms{X}$ is smooth, the morphism of deformation functors
$$\Def_{\ms{P}/\ms{P}'} \to \Def_X$$
is also formally smooth.

Replacing $\ms P$ by $\ms P'\oplus\ms P/\ms P'$, we may thus assume that the functor
$$\Def_{X,\ms{P}} \to \Def_{X,\det,\ms{P}}$$
is formally smooth. Note that $\det\ms P$ is naturally the pullback of an invertible sheaf on $X$, since $\ms P$ has rank divisible by $\ell^n$ ($\alpha$ being primitive by assumption). 
We can thus work with the universal deformation of $X$ over which a chosen ample divisor $H$ and $\det\ms P$ remain algebraic, so that $\ms P$ also deforms. By Grothendieck Existence for twisted sheaves \cite[Lemma~2.3.1.3]{L1}, the formal deformation of $\ms{P}$ algebraizes, so we see that we may prove the result under the assumption that the base field has characteristic 0. Since the integrality is invariant under algebraically closed base field extension, we may assume that the base field is $\C$. Lemma 3.1, Lemma 3.3, and Remark 3.2 of \cite{Y} then imply that $v^{\alpha/r}(\ms P)$ has precisely the form described in Lemma \ref{L:integral twisted Chow}.

To see the second statement, note that
\begin{align*}
-v^{\alpha/r}(\ms P)\cdot v^{\alpha/r}(\ms Q)&=
\deg\left(\chern_{\ms X}(\ms P^\vee\ltensor\ms Q)\Todd_X\right)\\
&=\deg\left(\chern(\R\pi_\ast(\rshom(\ms P,\ms Q))\Todd_X\right)\\
&=\chi(\R\pi_\ast\rshom(\ms P,\ms Q))\\
&=\chi(\ms P,\ms Q)
\end{align*}

\end{proof}

\subsection{Moduli spaces of twisted sheaves}
We recall the essential details of the moduli theory of twisted sheaves on K3 surfaces that we will need, building on the foundational work in \cite{M}.  The reader can consult \cite{Y} for a development of a more general theory using a stack-free formulation of the notion of twisted sheaf. In particular, we only consider the case in which the rank equals the order of the Brauer class in order to simplify the exposition; \cite{Y} contains the general theory (over $\C$). 

Fix a K3 surface $X$ over $k$, $\alpha/\ell^n$ a primitive $\ell$-adic $B$-field, and $v=(r,D,s)\in\Chow^{\alpha/\ell^n}(X,\Z)$ a Mukai vector.

\begin{defn}
 The \emph{stack of simple $\ms X$-twisted sheaves with Mukai vector $v$} is the stack $\ms M_{\ms X}(v)$ whose objects over a $k$-scheme $T$ are pairs $(\ms F,\phi)$, where $\ms F$ is a $T$-flat quasi-coherent $\ms X_T$-twisted sheaf of finite presentation and $\phi:\det\ms F\simto\ms O(D)$ is an isomorphism of invertible sheaves on $\ms X$, such that for every geometric point $t\to T$, the fiber sheaf $\ms F_t$ has Mukai vector $v$ and endomorphism ring $k(t)$.
\end{defn} 

Using Artin's Representability Theorem, one can show that $\ms M_{\ms X}(v)$ is an Artin stack locally of finite type over $k$. This is proven in Section 2.3.1 of \cite{L1}. 

\begin{prop}\label{P:moduli summary}
 Given an $\ell$-adic $B$-field $\alpha/\ell^n$ and a primitive Mukai vector $$v\in\Chow^{\alpha/\ell^n}(X,\Z)$$ such that $\rk v=\ell^n$ and $v^2=0$, we have that
\begin{enumerate}
 \item the stack
 $\ms M_{\ms X}(v)$
of simple $\ms X$-twisted sheaves with Mukai vector $v$ is a $\m_{\ell^n}$-gerbe over a K3 surface $M_{\ms X}(v)$;
\item the universal sheaf $\ms E$ on $\ms X\times\ms M_{\ms X}(v)$ defines an equivalence of derived categories
$$\D^{\tw}(\ms X)\simto\D^{-\tw}(\ms M_{\ms X}(v));$$
\item if there is a vector $u\in\Chow^{\alpha/\ell^n}(X,\Z)$ such that $(u\cdot v,\ell)=1$ then the Brauer class of the gerbe
$$\ms M_{\ms X}(v)\to M_{\ms X}(v)$$
is trivial.  In particular, there is an equivalence of derived categories
$$\D^{-\tw}(\ms M_{\ms X}(v))\simto\D(M_{\ms X}(v)).$$
\end{enumerate}
\end{prop}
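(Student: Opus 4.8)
The plan is to run the Mukai--Yoshioka program for moduli of stable twisted sheaves, importing the representability, boundedness, and semistable-reduction results of \cite{M} and adapting the characteristic-zero computations of \cite{Y}. I fix an auxiliary polarization that is generic with respect to $v$; since $v$ is primitive, Gieseker-stable and -semistable coincide, and one checks (as in \cite{M}) that the simple $\ms X$-twisted sheaves with Mukai vector $v$ are exactly the stable ones, so $\ms M_{\ms X}(v)$ is the proper stack of stable twisted sheaves. For part (1) I would first pin down the gerbe structure: a simple twisted sheaf $\ms F$ has $\Aut(\ms F)=\G_m$, and a scalar $\lambda$ acts on $\det\ms F$, of rank $r=\ell^n$, by $\lambda^{\ell^n}$, so compatibility with the fixed isomorphism $\det\ms F\simto\ms O(D)$ forces $\lambda\in\m_{\ell^n}$; hence the inertia of $\ms M_{\ms X}(v)$ is constantly $\m_{\ell^n}$ and the stack is a $\m_{\ell^n}$-gerbe over its rigidification $M_{\ms X}(v)$. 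Next I would use deformation theory: the tangent and obstruction spaces are the trace-free groups $\Ext^1(\ms F,\ms F)_0$ and $\Ext^2(\ms F,\ms F)_0$. Simplicity gives $\Hom(\ms F,\ms F)=k$, and Serre duality on a K3 gives $\Ext^2(\ms F,\ms F)\cong\Hom(\ms F,\ms F)^\vee=k$, so $\Ext^2(\ms F,\ms F)_0=0$ and the obstructions vanish; the Riemann--Roch formula of Lemma~\ref{L:chern char props}(2) gives $\chi(\ms F,\ms F)=-v^2=0$, whence $\dim_k\Ext^1(\ms F,\ms F)=2$. Thus $M_{\ms X}(v)$ is smooth of dimension $2$, and it is proper by the boundedness and valuative-criterion arguments of \cite{M}. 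To see it is a K3 surface I would exhibit the Mukai symplectic form: Serre duality makes the cup product on $\Ext^1(\ms F,\ms F)$ a nondegenerate alternating pairing, which globalizes to a nowhere-vanishing $2$-form trivializing $\omega_{M_{\ms X}(v)}$; combining this with $b_1=0$ (obtained by deforming the pair $(X,\ms F)$ to characteristic zero, where it is Yoshioka's theorem, and invoking constancy of Betti numbers in a smooth proper family) identifies $M_{\ms X}(v)$ as a K3 surface.

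For part (2), the universal twisted sheaf $\ms E$ is twisted by the gerbe on the second factor with the opposite sign, which is exactly why the Fourier--Mukai kernel $\ms E$ induces a functor $\Phi_{\ms E}$ landing in $\D^{-\tw}(\ms M_{\ms X}(v))$. To show $\Phi_{\ms E}$ is an equivalence I would apply the standard criterion for a Fourier--Mukai kernel: it suffices that the images $\Phi_{\ms E}(k(s))$ of the point objects form an orthonormal spanning class, i.e. the parameterized stable sheaves satisfy $\Hom^i(\ms E_s,\ms E_t)=0$ for $s\ne t$ and all $i$, while $\Hom(\ms E_s,\ms E_s)=k=\Ext^2(\ms E_s,\ms E_s)$. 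All of this follows from stability (non-isomorphic stable sheaves of the same slope admit no maps) together with $\chi(\ms E_s,\ms E_t)=-v^2=0$ and Serre duality. This yields full faithfulness, and since source and target are derived categories of (twisted) K3 surfaces, hence Calabi--Yau of the same dimension, full faithfulness forces essential surjectivity.

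For part (3), the Brauer class $\beta$ of $\ms M_{\ms X}(v)\to M_{\ms X}(v)$ is killed by $\ell^n$, so it is $\ell$-primary, and it is precisely the obstruction to the gerbe being neutral. The hypothesis $(u\cdot v,\ell)=1$ produces a splitting object: choosing a perfect complex $\ms P$ of $\ms X$-twisted sheaves with $v^{\alpha/\ell^n}(\ms P)=u$ and transforming, the complex $\Phi_{\ms E}(\ms P)\in\D^{-\tw}(\ms M_{\ms X}(v))$ has rank equal up to sign to the Mukai pairing $u\cdot v$ (computed via the fibers using Lemma~\ref{L:chern char props}(2)). Thus $\ms M_{\ms X}(v)$ carries a $(-1)$-twisted perfect complex of rank prime to $\ell$. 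Since $\ind(\beta)$ divides the rank of any twisted complex and $\ind$ and $\per$ share the same prime divisors, $\ell\nmid\rk$ forces $\ell\nmid\ind(\beta)$; as $\beta$ is $\ell$-primary this gives $\ind(\beta)=1$, i.e. $\beta=0$. Hence the gerbe is neutral, twisted sheaves on it descend, and we obtain $\D^{-\tw}(\ms M_{\ms X}(v))\simto\D(M_{\ms X}(v))$.

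The main obstacle I expect lies in part (1): establishing properness of $M_{\ms X}(v)$ and verifying the two K3 conditions ($\omega$ trivial and $b_1=0$) in positive characteristic. The gerbe structure and smoothness are formal consequences of deformation theory, and the shape of the equivalence in (2) and the Brauer-class argument in (3) are essentially numerical once $M_{\ms X}(v)$ is known to be a smooth proper K3 surface. The genuine work is the semistable-reduction and boundedness machinery for twisted sheaves and the lift-to-characteristic-zero argument needed to transport Yoshioka's computations to our setting, which is exactly where \cite{M} and the deformation argument are indispensable.
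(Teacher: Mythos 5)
Your overall architecture tracks the paper's proof closely — stability is vacuous because the rank equals the order of the Brauer class, smoothness follows from unobstructedness of simple twisted sheaves with fixed determinant, the K3 conditions are transported from characteristic zero (where they are Yoshioka's theorem) through a smooth proper family, part (2) is Bridgeland's criterion as in C\u ald\u araru, and part (3) kills the Brauer class with a $(-1)$-twisted complex of rank prime to $\ell$. But there is a genuine gap in part (3): you write ``choosing a perfect complex $\ms P$ of $\ms X$-twisted sheaves with $v^{\alpha/\ell^n}(\ms P)=u$'' as if such a complex obviously exists. That existence is precisely Lemma~\ref{L:kill class} of the paper, and it is the deepest input to the whole proposition, not a routine choice. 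Writing $u=(\ell^n a, D+a\alpha, c)$, one needs: (i) a locally free $\ms X$-twisted sheaf of rank $\ell^n$, which is the period-index theorem for surfaces (Theorem 4.3.1.1 of \cite{L2}, resting on de Jong's theorem and the asymptotic geometry of moduli of twisted sheaves); (ii) twisted invertible sheaves supported on curves — available because curves over finite fields have trivial Brauer group — to realize an arbitrary determinant; and (iii) twisted skyscrapers via Wedderburn together with a zero-cycle of degree one from Lang--Weil \cite{LW} to hit the correct second Chern class. The hypothesis $(u\cdot v,\ell)=1$ plays no role in producing $\ms P$; it only enters afterwards, in the rank computation. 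Your assessment that the ``genuine work'' lies in part (1) inverts the actual difficulty: the paper explicitly remarks that this realization lemma is where the depth of the proposition lies.

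Three secondary points are repairable but should be fixed. First, properness: \cite{M} treats untwisted sheaves over $\C$, so the valuative criterion you invoke requires the Langton-type semistable reduction for twisted sheaves in positive characteristic (the paper cites Lemma 2.3.3.2 of \cite{L1}). Second, your lift to characteristic zero must lift the whole triple $(X, v, \alpha)$: the paper uses Deligne's theorem \cite{D} and smooth and proper base change to spread out over a $W(k)$-scheme, then chooses a DVR dominating the base with geometric generic Picard rank $1$, so that $v$ stays algebraic and primitive and the relative moduli space is a smooth proper family in which the special and generic fibers can be compared; ``deforming the pair $(X,\ms F)$'' does not by itself produce such a family of moduli spaces, which is what your constancy-of-Betti-numbers argument (and in fact the triviality of $\omega$ as well) needs. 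Third, in part (2) Bridgeland's criterion is applied over an algebraically closed field, whereas the proposition is stated over the finite field $k$; the paper first expresses the equivalence as the statement that the two adjunction maps are quasi-isomorphisms (Proposition 3.3 of \cite{LO}), which can be checked after base change to $\widebar{k}$ — a reduction step your argument should include.
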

\begin{proof}
	It suffices to prove the first statement after extending scalars to $\widebar k$.
	First note that since the sheaves in question have rank $\ell^n$ and the order of the Brauer class is $\ell^n$, every object is automatically stable with respect to any polarization: any subsheaf has rank either $0$ or $\ell^n$, so stability conditions become vacuous. (See, e.g., Sections 2.2.5 and 3.2.4 of \cite{L1} for further details.)

	Let $L$ be the second component of $v$. Tensoring the sheaves parametrized by $\ms M_{\ms X}(v)$ by an invertible sheaf $M$ results in an isomorphic moduli problem parametrizing sheaves with determinant $L\tensor M^{\tensor\ell^n}$. Thus, taking $M\in\Pic(X)\setminus p\Pic(X)$ to be sufficiently ample we see that we can assume that $L$ is ample and not contained in $p\Pic(X)$. 

	By Deligne's theorem \cite{D2} and the smooth and proper base change theorems in \'etale cohomology, there is a pointed connected smooth $W(k)$-scheme $(M,m)$, a projective relative K3 surface $\mc X\to M$, a class $\widetilde v=(\ell^n,\ms L,c)\in\Chow(\mc X)$, a class $\widetilde\alpha\in\H^2(\mc X,\Z_\ell(1))$, and an isomorphism between the fiber of the triple $(\mc X,\widetilde v,\widetilde\alpha)$ over $m$ and the triple $(X,v,\alpha)$. Moreover, the geometric generic fiber of $\mc X\to M$ has Picard group generated	by the restriction of $\ms L$. We choose a Henselian discrete valuation ring $R$ with algebraically closed residue field dominating $M$ and centered at $m$ and pull the family back to $R$. This results in a triple $(\mc X_R,\widetilde v_R,\widetilde\alpha_R)$ with closed fiber $(X,v,\alpha)$ (up to algebraically closed base field extension) and geometric generic fiber having Picard number $1$ and characteristic $0$, on which $v$ remains primitive. Write $H$ for the relative polarization from $\ms L$ and $v$ for $\widetilde v_R$. 
	
	Consider the relative moduli space $\ms M(v)\to\spec R$ parametrizing $H$-stable $\ms X$-twisted sheaves with Mukai vector $v$. Since any simple twisted sheaf with unobstructed determinant is unobstructed, we know that $\ms M(v)$ is a $\m_{\ell^n}$-gerbe over a smooth algebraic space $\mc M\to\spec R$. By Theorem 3.16 of \cite{Y}, the geometric generic fiber of $\mc M$ over $R$ is a K3 surface. By Lemma 2.3.3.2 of \cite{L1}, the algebraic space $\mc M$ is proper over $R$. It follows that $\mc M\to R$ is a relative K3 surface, as claimed. In particular, the special fiber is a K3 surface.

	By the foundational results on Fourier-Mukai transforms (see, for example, \cite{C}), the second statement is equivalent to the adjunction maps \begin{equation}\label{E:e}
		\ms O_{\Delta_{\ms X}}\to\R(\pr_{13})_\ast\left(\L\pr_{12}^\ast\ms E\ltensor\L\pr_{23}^\ast\ms E^{\vee}\right)
	\end{equation} and \begin{equation}\label{E:e2}
			\R(\pr_{13})_\ast\left(\L\pr_{12}^\ast\ms E\ltensor\L\pr_{23}^\ast\ms E^{\vee}\right)\to \ms O_{\Delta_{\ms X}}
		\end{equation} being quasi-isomorphisms. (See Proposition 3.3 of \cite{LO} for a proof without any assumptions on the base field.) It thus suffices to prove the result after base change to $\widebar k$. In this case, the proof proceeds precisely as in section 5.2 (proof of Theorem 1.2) of \cite{C}: one verifies the classical criterion of Bridgeland for the Fourier-Mukai functor to give an equivalence of derived categories.

 We now return to the situation in which our base field is the original finite field $k$. To prove the last statement, it suffices to prove the following lemma.
\begin{lem}\label{L:kill class}
 Given a Mukai vector $$u\in\Chow^{\alpha/\ell^n}(X,\Z),$$ there is a perfect complex $\ms P$ of $\ms X$-twisted sheaves such that $$v^{\alpha/\ell^n}(\ms P)=u.$$
\end{lem}
Let us accept the lemma for a moment and see why this implies the result.  Write $\ms E$ for the universal sheaf on $$\ms X\times\ms M_{\ms X}(v),$$ and let $p$ and $q$ denote the first and second projections of that product, respectively. 
The sheaf $\ms E$ is simultaneously $\ms X$- and $\ms M_{\ms X}(v)$-twisted.  Given a complex $\ms P$ as in Lemma \ref{L:kill class}, consider the perfect complex of $\ms M_{\ms X}(v)$-twisted sheaves
$$\ms Q:=\R q_\ast(\L p^\ast\ms P^\vee\ltensor\ms E).$$
The rank of this complex over a geometric point $m$ of $\ms M$ is calculated by
$$\chi(\ms P,\ms E_m)=-v^{\alpha/\ell^n}(\ms P)\cdot v=-u\cdot v,$$
which is relatively prime to $\ell$.  By standard results, we have that the Brauer class of $\ms M_{\ms X}(v)$ satisfies $$[\ms M_{\ms X}(v)]\in\Br(M_{\ms X}(v))[u\cdot v].$$
On the other hand, $\ms M_{\ms X}(v)\to M_{\ms X}(v)$ is a $\m_{\ell^n}$-gerbe, which implies that
$$[\ms M_{\ms X}(v)]\in\Br(M_{\ms X}(v))[\ell^n].$$
Combining the two statements yields the result.

It remains to prove Lemma \ref{L:kill class}.
\begin{proof}[Proof of Lemma \ref{L:kill class}]
We know that
$$u=(ra,D+a\alpha,c),$$ so we seek a perfect complex of rank $ra$, determinant $D$, and appropriate second Chern class.  By Theorem 4.3.1.1 of \cite{L2}, there is a locally free $\ms X$-twisted sheaf $V$ of rank $r$.  Moreover, since any curve over a finite field has trivial Brauer group the restriction $\ms X\times_X C$ has trivial Brauer class, hence supports twisted sheaves $L_C$ of rank $1$ by Lemma 3.1.1.8 of \cite{L2}.  Since $\det L_C\cong\ms O(C)$, we see that we can get any determinant by adding (in the derived category) a sum of shifts of invertible sheaves supported on curves.  Finally, Wedderburn's theorem yields invertible twisted sheaves supported at any closed point $p$ of $X$, and they have twisted Mukai vector $(0,0,d)$, where $d$ is the degree of $p$. By the Lang-Weil estimates \cite[Corollary 3]{LW} $X$ has a $0$-cycle of degree $1$, so there is a sum of (shifted) twisted skyscraper sheaves giving the desired second Chern class.  Since any bounded complex on $\ms X$ is perfect ($\ms X$ being regular), the lemma is proven.
\end{proof}

This completes the proof of Proposition \ref{P:moduli summary}.
\end{proof}

\begin{remark}
	We note that Theorem 4.3.1.1 of \cite{L2} is far from trivial; it relies on de Jong's period-index theorem and a careful analysis of the asymptotic geometry of moduli spaces of twisted sheaves on surfaces (in particular, their asymptotic irreducibility). In this sense, the last part of Proposition \ref{P:moduli summary}, giving a ``numerical'' criterion linking a twisted derived category to an untwisted one, is the deepest. As we will see below, this equivalence is the glue that holds this direction of the argument together.
\end{remark}

\subsection{Twisted partners of K3 surfaces over a finite field}

Fix a K3 surface $X$ over the finite field $k$ of characteristic $p\geq 5$.  Let $k'/k$ be the quadratic extension
of $k$.

\begin{lem}\label{L:disc}
Let $V$ be a non-degenerate quadratic space over a field $F$ and let $\phi$ be a semi-simple element of the orthogonal
group of $V$, all of whose eigenvalues belong to $F$.  Then
\begin{displaymath}
\disc(V^{\phi=1} \oplus V^{\phi=-1})=(-1)^{n/2} \disc(V),
\end{displaymath}
where $n=\dim(V)-\dim(V^{\phi=1} \oplus V^{\phi=-1})$.
\end{lem}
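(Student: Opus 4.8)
The plan is to diagonalize $\phi$ and exploit the compatibility between its eigenspace decomposition and the orthogonal form. Since $\phi$ is semi-simple with all eigenvalues in $F$, I would write $V = \bigoplus_\lambda V_\lambda$ where $V_\lambda = \ker(\phi - \lambda)$ ranges over the eigenvalues $\lambda \in F^\times$ (note $0$ is not an eigenvalue, as $\phi$ is invertible). The key observation is that $\phi$ is orthogonal: for $x \in V_\lambda$ and $y \in V_\mu$ one has $(x,y) = (\phi x, \phi y) = \lambda\mu(x,y)$, so $(x,y) = 0$ unless $\lambda\mu = 1$. Thus $V_\lambda$ and $V_\mu$ are orthogonal unless $\mu = \lambda^{-1}$.

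From this I would extract the orthogonal decomposition $V = W \perp U$, where $W = V^{\phi=1}\oplus V^{\phi=-1}$ and $U = \bigoplus_{\lambda \neq \pm 1} V_\lambda$; here $W \perp U$ because the only eigenvalue $\mu$ equal to $1^{-1}$ or $(-1)^{-1}$ is $\pm 1$ itself. Consequently $\disc(V) = \disc(W)\cdot\disc(U)$ in $F^\times/(F^\times)^2$, and since the target claim rearranges to $\disc(W) = (-1)^{n/2}\disc(V)$ with $(-1)^{n/2}$ its own inverse modulo squares, it suffices to show $\disc(U) = (-1)^{n/2}$, where $n = \dim U = \dim V - \dim W$.

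Next I would analyze $U$. Each $V_\lambda$ with $\lambda \neq \pm 1$ is totally isotropic (since $(x,x) = \lambda^2(x,x)$ with $\lambda^2 \neq 1$ forces $(x,x) = 0$), and the remaining eigenvalues pair up as $\{\lambda, \lambda^{-1}\}$ with $\lambda \neq \lambda^{-1}$. Non-degeneracy of $V$, together with the orthogonality just established, forces the pairing $V_\lambda \times V_{\lambda^{-1}} \to F$ to be perfect; in particular $\dim V_\lambda = \dim V_{\lambda^{-1}}$, so $n$ is even. For each such pair, choosing bases of $V_\lambda$ and $V_{\lambda^{-1}}$ gives a Gram matrix of hyperbolic block form $\left(\begin{smallmatrix} 0 & B \\ B^t & 0\end{smallmatrix}\right)$ with $B$ invertible of size $d = \dim V_\lambda$, whose determinant is $(-1)^d(\det B)^2$; hence $\disc(V_\lambda \oplus V_{\lambda^{-1}}) = (-1)^d$ modulo squares. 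Summing the exponents over all pairs yields $\sum_{\textrm{pairs}} d = \tfrac12 \dim U = n/2$, so $\disc(U) = (-1)^{n/2}$, which completes the argument.

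The routine parts are the eigenspace bookkeeping and the block-determinant computation; the one point requiring care is justifying that non-degeneracy of the full form on $V$ implies the pairing between $V_\lambda$ and $V_{\lambda^{-1}}$ is perfect (not merely nonzero), which is exactly what pins down the discriminant of each hyperbolic summand and guarantees that $n$ is even. This follows because $V_\lambda$ is orthogonal to every eigenspace except $V_{\lambda^{-1}}$, so any vector in the radical of the restricted pairing would lie in the radical of $V$ itself.
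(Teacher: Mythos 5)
Your proof is correct and is essentially the same argument as the paper's: both decompose $V$ into eigenspaces, observe that $V_\lambda$ pairs trivially with $V_\mu$ unless $\mu=\lambda^{-1}$, and compute that each hyperbolic summand $V_\lambda\oplus V_{\lambda^{-1}}$ contributes $(-1)^{\dim V_\lambda}$ to the discriminant, giving $(-1)^{n/2}$ in total. Your explicit justification that non-degeneracy of $V$ makes the pairing $V_\lambda\times V_{\lambda^{-1}}\to F$ perfect (hence $n$ even) is a point the paper leaves implicit, and is a worthwhile addition.
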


\begin{proof}
Write $V_{\lambda}$ for the $\lambda$ eigenspace of $\phi$.  Suppose $\lambda \ne \pm 1$.  Since $\phi$
preserves the form, elements of $V_{\lambda}$ pair to zero with elements of $V_{\mu}$ unless $\mu=\lambda^{-1}$.
Note that $\lambda^{-1} \ne \lambda$.  The discriminant of $V_{\lambda} \oplus V_{\lambda^{-1}}$ is $(-1)^{d/2}$, where
$d=\dim(V_{\lambda})$, as is easily seen by choosing a basis for $V_{\lambda}$, taking the dual basis for
$V_{\lambda^{-1}}$ and computing the matrix of pairings.  Now, list the eigenvalues of $\phi$, other than $\pm 1$,
as $\lambda_1, \lambda_1^{-1},
\ldots, \lambda_k, \lambda_k^{-1}$.  Then $V$ decomposes as a direct sum of $V_1$, $V_{-1}$ and the $V_{\lambda_i}
\oplus V_{\lambda_i^{-1}}$.  We thus find that $\disc(V)=\disc(V_1 \oplus V_{-1}) (-1)^{n/2}$, as was to be shown.
\end{proof}

\begin{lem}\label{L:non-zero square}
Suppose that $\Br(X_{k'})$ is infinite, and let $d$ be a rational number.  Then there are infinitely many
primes $\ell$ for which $T(X_{k'}, \Z_{\ell})$ contains an element $\gamma$ with $\gamma^2=d$.
\end{lem}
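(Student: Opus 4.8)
The plan is to reduce the statement to a local question about the single quadratic lattice $T(X_{k'},\Z_\ell)$ and then to control that lattice uniformly in $\ell$. Let $\phi$ denote the geometric Frobenius of $k$, acting on $V_\ell:=\Het^2(X_{\ol k},\Q_\ell(1))$; after the Tate twist this action is orthogonal, its eigenvalues are Weil $q$-numbers of weight $0$, and it is semisimple by \cite{D}. The Frobenius of $k'$ is $\phi^2$, so that (via Hochschild--Serre, using $\Het^1(X_{\ol k})=0$) the lattice in which $T(X_{k'},\Z_\ell)$ lives is $N_\ell(X_{k'})(1)$, whose rationalization is the $\phi^2=1$ eigenspace $V_\ell^{\phi=1}\oplus V_\ell^{\phi=-1}$. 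Since $\Br(X_{k'})$ is infinite, Proposition~\ref{P:B-fields explain it} gives $T(X_{k'},\Z_\ell)\neq 0$ for every $\ell$; moreover its rank $t:=\rk T(X_{k'},\Z_\ell)$ is independent of $\ell$, because it equals $\dim\big(V_\ell^{\phi=1}\oplus V_\ell^{\phi=-1}\big)-\rk\Pic(X_{k'})$ and both terms are computed from the characteristic polynomial of $\phi$, which lies in $\Z[T]$ independently of $\ell$. Write $\delta=\disc\Pic(X_{k'})$, a fixed nonzero integer.

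First I would pin down $T(X_{k'},\Z_\ell)$ for all but finitely many $\ell$. The image $P(X_{k'},\Z_\ell)=\Pic(X_{k'})\otimes\Z_\ell$ is primitive in $N_\ell(X_{k'})(1)$ by the Kummer-sequence argument of Lemma~\ref{tate-ell} (the relevant Tate module is torsion free, and $\Pic(X_{k'})=\NS(X_{\ol k})^{\Gal(\ol k/k')}$ is saturated in $\NS(X_{\ol k})$). By Proposition~\ref{step1a}, $N_\ell(X_{k'})$ is unimodular for $\ell$ large, so for $\ell$ large with $\ell\nmid\delta$ the sublattice $P(X_{k'},\Z_\ell)$ is itself unimodular, splits off orthogonally, and $T(X_{k'},\Z_\ell)=P(X_{k'},\Z_\ell)^{\perp}$ is unimodular of rank $t$ with $\disc T(X_{k'},\Z_\ell)\cdot\delta\equiv\disc\big(N_\ell(X_{k'})(1)\big)\pmod{(\Q_\ell^\times)^2}$. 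Now I apply Lemma~\ref{L:disc} to $(V_\ell,\phi)$: the eigenvalues other than $\pm1$ occur in inverse pairs $\lambda,\lambda^{-1}$ and contribute hyperbolic blocks exactly as in its proof, while $\disc V_\ell=-1$ because $\Het^2(X_{\ol k},\Z_\ell(1))\cong\Lambda_{\mathrm{K3}}\otimes\Z_\ell$ is unimodular of determinant $-1$. Hence $\disc\big(N_\ell(X_{k'})(1)\big)=(-1)^{n/2}\cdot(-1)$ with $n=22-t-\rk\Pic(X_{k'})$ independent of $\ell$, and therefore $\disc T(X_{k'},\Z_\ell)\equiv D\pmod{(\Q_\ell^\times)^2}$ for a single fixed nonzero rational $D$, for all but finitely many $\ell$.

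With this in hand the representation statement splits according to the value of $t$. If $t\geq 2$, then for every sufficiently large $\ell$ not dividing the numerator or denominator of $d$ the lattice $T(X_{k'},\Z_\ell)$ is unimodular of rank $\geq 2$ and $d$ is an $\ell$-adic unit; its reduction modulo $\ell$ is a nondegenerate quadratic form of rank $\geq 2$ over $\F_\ell$, which represents the nonzero value $\bar d$, and Hensel's lemma lifts a representing vector to some $\gamma\in T(X_{k'},\Z_\ell)$ with $\gamma^2=d$. This already produces infinitely many $\ell$. If $t=1$, write $T(X_{k'},\Z_\ell)=\Z_\ell e$ with $(e,e)\equiv D\pmod{(\Q_\ell^\times)^2}$; then $d$ is represented precisely when $d/D$, equivalently $dD$, is a square in $\Q_\ell$. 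Since $dD$ is a fixed nonzero rational, the set of primes $\ell$ for which $dD\in(\Q_\ell^\times)^2$ has density at least $1/2$ by quadratic reciprocity (equivalently, Chebotarev applied to $\Q(\sqrt{dD})/\Q$), and in particular is infinite.

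I expect the crux to be the second paragraph, namely showing that $\disc T(X_{k'},\Z_\ell)$ is a fixed rational modulo squares; everything downstream is formal once this is known. The delicate point there is the application of Lemma~\ref{L:disc} when the eigenvalues of $\phi$ do not lie in $\Q_\ell$: one must check that the non-$(\pm1)$ part of $V_\ell$ still contributes an $\ell$-independent discriminant, which follows because the $\lambda\leftrightarrow\lambda^{-1}$ pairing organizes it into blocks governed by the fixed integral factors of the Frobenius characteristic polynomial. This fixed-discriminant fact is precisely what rescues the rank-one case $t=1$, which is otherwise not covered by the ``almost all $\ell$'' reduction available when $t\geq 2$.
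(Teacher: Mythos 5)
Your overall skeleton (dichotomy on the rank $t$ of the transcendental lattice, unimodularity for large $\ell$ via Proposition~\ref{step1a} and the decomposition~\eqref{eq5}, Hensel lifting when $t\ge 2$, square-class analysis plus Chebotarev when $t=1$) is the same as the paper's, and your $t\ge 2$ paragraph is correct. But there is a genuine gap at exactly the step you flag as the crux. Lemma~\ref{L:disc} hypothesizes that \emph{all} eigenvalues of $\phi$ lie in the ground field $F$, so you may not apply it to $(V_\ell,\phi)$ for all large $\ell$: whenever $\phi$ has an irrational eigenvalue, there is a positive-density set of primes $\ell$ at which that eigenvalue does not lie in $\Q_\ell$, there are then no separate $\lambda$- and $\lambda^{-1}$-eigenspaces over $\Q_\ell$ and no hyperbolic pairing, and your asserted identity $\disc\bigl(N_\ell(X_{k'})(1)\bigr)\equiv(-1)^{n/2}\cdot(-1)$ is false. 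Concretely, suppose the characteristic polynomial of $\phi$ on $V_\ell$ has an irreducible self-reciprocal quadratic factor $T^2-aT+1$ over $\Q$, with root field $K$ of discriminant $\Delta_K$. For $\ell$ inert in $K$ the corresponding block of $V_\ell$ is $K\otimes\Q_\ell$ with form $(x,y)\mapsto\tr_{K\otimes\Q_\ell/\Q_\ell}(\theta x\bar y)$ for some $\theta\in\Q_\ell^\times$, and a direct computation on the basis $1,\sqrt{\Delta_K}$ gives discriminant class $-\Delta_K$, a \emph{nonsquare}-unit class, not $-1$; so your displayed formula fails on a set of primes of density $1/2$. The weaker statement you retreat to --- that $\disc T(X_{k'},\Z_\ell)$ is the class of a single fixed rational $D$ for almost all $\ell$ --- happens to be true, but proving it requires a genuine hermitian-transfer/trace-form discriminant computation (each self-reciprocal block contributes a fixed rational class built from the relative discriminant of the corresponding factor field, independently of the varying hermitian form, as the $-\Delta_K$ computation above illustrates). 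Your one-sentence justification that the blocks are ``governed by the fixed integral factors'' is not a proof, and this missing input is precisely what your $t=1$ case rests on.

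The paper avoids the issue entirely: in the rank-one case it makes no almost-all-$\ell$ discriminant claim, but instead restricts to primes $\ell$ that split completely in the number field $E$ generated by the eigenvalues of $\phi$ together with $\sqrt{-1}$, $\sqrt{d}$, and $\sqrt{\disc\Pic(X_{k'})}$. For such $\ell$ all eigenvalues lie in $\Q_\ell$, Lemma~\ref{L:disc} applies verbatim to show $\disc\bigl(\Het^2(X_{\ol k},\Z_\ell(1))^{\phi'=1}\bigr)=\pm1$, hence by~\eqref{eq5} and unimodularity $\disc T(X_{k'},\Z_\ell)$ is a unit square; since $d$ is also a unit square for these $\ell$, the rank-one lattice represents $d$, and Chebotarev supplies infinitely many such primes. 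Your argument is repaired the same way: in the $t=1$ case, replace the fixed-$D$ claim by this restriction to completely split primes (or else supply the transfer computation in full).
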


\begin{proof}
Let $\phi'=\phi^2$ be the Frobenius over $k'$.  If $\ell$ does not divide the discriminant of $\Pic(X_{k'})$ then the discriminant of $P(X_{k'}, \Z_{\ell})$ is an $\ell$-adic unit, and we have an orthogonal
decomposition
\begin{equation}
\label{eq5}
H^2_{\et}(X_{\ol{k}}, \Z_{\ell}(1))^{\phi'=1}=P(X_{k'}, \Z_{\ell}) \oplus T(X_{k'}, \Z_{\ell}).
\end{equation}
By Proposition~\ref{step1a}, the discriminant of the lattice $H^2_{\et}(X_{\ol{k}}, \Z_{\ell}(1))^{\phi'=1}$ is an
$\ell$-adic unit for $\ell \gg 0$.  We thus find that the discriminant of $T(X_{k'}, \Z_{\ell})$ is an $\ell$-adic unit
for $\ell \gg 0$.  Therefore, if $T(X_{k'}, \Z_{\ell})$ has rank at least two then any element of $\Z_{\ell}$ is of
the form $\gamma^2$ for some $\gamma \in T(X_{k'}, \Z_{\ell})$.  (This follows from \cite[\S IV.1.7, Prop.~4]{Serre} and
Hensel's lemma.)

We must now handle the case where the transcendental lattice has rank one.  Let $E$ be the number field generated by
the eigenvalues of $\phi$ and the square roots of $-1$, $d$ and the discriminant of $\Pic(X_{k'})$.  Let $\ell \gg 0$
be a large prime which splits completely in $E$.  Recall from \cite{D} that the action of $\phi$ on
$H^2_{\et}(X_{\ol{k}}, \Q_{\ell}(1))$ is semi-simple.  We have
\begin{displaymath}
H^2_{\et}(X_{\ol{k}}, \Q_{\ell}(1))^{\phi'=1}=
H^2_{\et}(X_{\ol{k}}, \Q_{\ell}(1))^{\phi=1} \oplus H^2_{\et}(X_{\ol{k}}, \Q_{\ell}(1))^{\phi=-1}
\end{displaymath}
Lemma~\ref{L:disc} thus shows that the discriminant of $H^2_{\et}(X_{\ol{k}}, \Q_{\ell}(1))^{\phi'=1}$ is $\pm 1$, and
therefore a square in $\Q_{\ell}$; the same is true when we use $\Z_{\ell}$ coefficients.
Since the discriminant of $\Pic(X_{k'})$ is also a square in $\Q_{\ell}$, the equation~\eqref{eq5} shows that the
discriminant of $T(X_{k'}, \Z_{\ell})$ is a square in $\Q_{\ell}$.  As in the previous paragraph, we know this
discriminant is also a unit.  Since the transcendental lattice has rank 1, we thus find that it has an element
$\alpha$ such that $\alpha^2=1$.  Since $d$ is a square in $\Q_{\ell}$, we can find a multiple $\gamma$ of $\alpha$
with $\gamma^2=d$.
\end{proof}

Now assume that $X$ is a K3 surface over the finite field $k$ such that $\Br(X)$ is infinite.  Fix a primitive
ample divisor $D\in\Pic(X)$ and choose an odd prime $\ell$ relatively prime to $\disc\Pic(X)$ and $D^2$ and an
element $\gamma \in
T(X_{k'}, \Z_{\ell})$ such that $\gamma^2=-D^2$.  (This is possible by the above lemma.)

\begin{defn}
 Given a relative K3 surface $\mc Y\to S$, an invertible sheaf $\ms L\in\Pic(\mc Y)$, and positive integers $r$ and $s$,
let $$\Sh_{\mc Y/S}(w)\to S$$ be the stack of simple locally free sheaves with Mukai vector $w=(r,\ms L,s)$ on each fiber.
\end{defn}

\begin{remark}
  A family in this moduli problem over $T$ consists of a locally free sheaf $V$ of constant rank on $\mc Y_T$ and an isomorphism $$\det V\simto\ms L_{\mc Y_T}$$ such that for each geometric point $t\to T$, the restriction $V_t$ has Mukai vector $w$.
\end{remark}

\begin{lem}\label{L:relative moduli}
 If $w$ is primitive and $w^2=0$ then $\Sh_{\mc Y/S}(w)$ is a $\m_r$-gerbe over a smooth algebraic space $\mSh_{\mc Y/S}(w)$ of relative dimension $2$ over $S$ with non-empty geometric fibers.
\end{lem}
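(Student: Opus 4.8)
The plan is to establish the three assertions---the $\m_r$-gerbe structure, smoothness of relative dimension $2$, and non-emptiness of the geometric fibers---separately, reducing the first two to deformation theory on the K3 fibers of $\mc Y\to S$ and the last to a specialization argument in the spirit of the proof of Proposition~\ref{P:moduli summary}. First I would invoke Artin's representability theorem (as in Section 2.3.1 of \cite{L1}) to see that $\Sh_{\mc Y/S}(w)$ is an algebraic stack locally of finite type over $S$. As in the twisted moduli problem, I read the condition ``Mukai vector $w=(r,\ms L,s)$'' as rigidifying the determinant, i.e.\ the data of an object includes an isomorphism $\det\ms F\simto\ms L$. A simple sheaf has automorphism group $\G_m$ acting by scalars, and a scalar $\lambda$ acts on $\det\ms F$ by $\lambda^r$; hence the automorphisms compatible with the fixed determinant are exactly the $r$-th roots of unity. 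The inertia is therefore the constant central group $\m_r$, and rigidifying by it produces an algebraic space $\mSh_{\mc Y/S}(w)$ together with a $\m_r$-gerbe $\Sh_{\mc Y/S}(w)\to\mSh_{\mc Y/S}(w)$.

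For smoothness and the dimension count I would use the deformation theory of a simple locally free sheaf $F$ with fixed determinant on a geometric K3 fiber $Y$. The relevant tangent and obstruction spaces are the trace-free groups $\Ext^1(F,F)_0$ and $\Ext^2(F,F)_0$, sitting in the splittings $\Ext^i(F,F)=\Ext^i(F,F)_0\oplus\H^i(\mc O_Y)$ induced by the trace. Because $Y$ is a K3 surface, Serre duality gives $\Ext^2(F,F)\cong\Hom(F,F)^\vee=k$, and the trace identifies this one-dimensional space with $\H^2(\mc O_Y)$, so $\Ext^2(F,F)_0=0$; thus the relative obstruction vanishes and $\Sh_{\mc Y/S}(w)$, equivalently $\mSh_{\mc Y/S}(w)$, is smooth over $S$. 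Since $\H^1(\mc O_Y)=0$ we have $\Ext^1(F,F)_0=\Ext^1(F,F)$, and Riemann--Roch on $Y$ gives $\chi(F,F)=-\langle w,w\rangle$, whence $\dim\Ext^1(F,F)=2+\langle w,w\rangle$. The hypothesis $w^2=0$ then yields relative dimension $2$.

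It remains to produce a simple locally free sheaf with Mukai vector $w$ on each geometric fiber, and I expect this non-emptiness to be the main obstacle, the deformation-theoretic input above being essentially formal. Over $\C$ the existence of stable---hence simple---locally free sheaves with primitive Mukai vector satisfying $w^2=0$ is due to Mukai and Yoshioka \cite{M,Y}. To transport this to the positive-characteristic fibers I would argue exactly as in the proof of Proposition~\ref{P:moduli summary}: working one geometric fiber at a time, spread the K3 surface together with the polarization and the class $\ms L$ out over a complete discrete valuation ring whose generic fiber has characteristic $0$ and on which $\ms L$ and $w$ remain algebraic, so that the geometric generic fiber carries such a sheaf. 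Here primitivity of $w$ is used crucially: it forces every simple sheaf of the relevant rank to be stable, so that the corresponding relative moduli space is proper (Lemma 2.3.3.2 of \cite{L1}), and the existence of a sheaf then specializes from the characteristic-$0$ generic fiber to the given special fiber. Combined with the smoothness and dimension count already established, this shows $\mSh_{\mc Y/S}(w)$ is a smooth algebraic space of relative dimension $2$ over $S$ with non-empty geometric fibers.
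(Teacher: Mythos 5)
Your treatment of the gerbe structure, smoothness, and dimension count coincides with the paper's own proof: the paper likewise observes that the obstruction space for deforming a locally free sheaf $V$ with fixed determinant on a K3 fiber is the kernel of the trace map $\Ext^2(V,V)\to\H^2(\mc Y_s,\ms O)$, which vanishes for simple $V$ by Serre duality, and computes the relative dimension as $\dim\Ext^1(V,V)=w^2+2=2$; your rigidification argument for the $\m_r$-gerbe structure is the standard one implicit in the paper's framework. The divergence is in non-emptiness: the paper simply cites Section 3 of \cite{Y} (Yoshioka's existence theorem for stable sheaves with primitive isotropic Mukai vector, proved over $\C$), whereas you spread out to characteristic $0$ and specialize back. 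That is a legitimate route --- indeed it supplies a bridge to positive-characteristic fibers that the paper's terse citation leaves implicit, and it mirrors the lifting technique in the paper's proof of Proposition~\ref{P:moduli summary}.

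However, your justification of the specialization step contains a genuine error. You assert that primitivity of $w$ ``forces every simple sheaf of the relevant rank to be stable.'' That automatic-stability argument is borrowed from the twisted setting of Proposition~\ref{P:moduli summary}, where it works only because the rank of the sheaves equals the order of the Brauer class, so every subsheaf has rank $0$ or $\ell^n$. In the present \emph{untwisted} setting there is no such constraint: simple locally free sheaves of rank $r$ can be unstable, and primitivity of $w$ alone does not rule this out. What the specialization argument actually requires is the absence of strictly semistable sheaves, which follows from primitivity of $w$ together with a $w$-generic polarization (or Picard rank one on the geometric generic fiber, as arranged in the paper's proof of Proposition~\ref{P:moduli summary}); Langton's theorem then makes the stable locus proper over the dvr, so non-emptiness of the characteristic-$0$ fiber forces non-emptiness of the special fiber. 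Note also that Lemma 2.3.3.2 of \cite{L1}, which you cite for properness, is a statement about twisted sheaves. Finally, since the lemma concerns \emph{locally free} sheaves, you must check that the stable sheaf produced on the special fiber is locally free; this is automatic here because for torsion-free $F$ with $v(F)=w$ one has $v(F^{\vee\vee})^2=w^2-2r\cdot\mathrm{length}(F^{\vee\vee}/F)\geq -2$, which with $w^2=0$ and $r\geq 2$ forces $F=F^{\vee\vee}$. With these repairs your argument closes the gap correctly.
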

\begin{proof}
	Non-emptiness of the geometric fibers is proven in Proposition 3.15 of \cite{Y}.  The smoothness follows from the fact that that obstruction theory of a locally free sheaf $V$ with determinant $\ms L$ on a fiber $\mc Y_s$ is given by the kernel of the trace map $$\Ext^2(V,V)\to\Het^2(\mc Y_s,\ms O),$$
 which vanishes when $V$ is simple.  The relative dimension is
 $$\dim\Ext^1(V,V)=w^2+2=2,$$
 as desired.
\end{proof}
When $S=\spec L$ and the base field is understood, we will write simply $\Sh_{\mc Y}(w)$ and $\mSh_{\mc Y}(w)$.

\begin{prop}\label{P:partner blob}
 In the above situation, there is an infinite sequence of pairs $(\gamma_n,M_n)$ such that
\begin{enumerate}
 \item $\gamma_n\in\Het^2(X_{k'}, \m_{\ell^n})$ with $[\gamma_n]$ of exact order $\ell^n$ in $\Br(X_{\widebar k})$;
 \item $M_n$ is a K3 surface over $k'$ such that $\rho(M_n)=\rho(X_{k'})$ and $$\rk T(M_n,\Z_\ell)=\rk T(X_{k'},\Z_\ell);$$
 \item given a gerbe $\ms X_n\to X_{k'}$ representing $\gamma_n$, we have that $M_n$ is a fine moduli space of stable
$\ms X_n$-twisted sheaves with determinant $D$;
 \item for each $n$, there is a (classical) Mukai vector $w_n\in\Chow(M_n)$ such that
\begin{enumerate}
 \item a tautological sheaf $\ms E$ on $\ms X_n\times M_n$ induces an open immersion $$\ms X_n\inj \Sh_{M_n}(w_n);$$
 \item given a complete dvr $R$ with residue field $k''$ containing $k'$ and a relative K3 surface $\mc M$ over $R$ with
$\mc M\tensor_R k''=M_n\tensor_{k'} k''$ such that $\Pic(\mc M)=\Pic(M_n),$ there is a relative K3 surface $\mc X\to\spec R$ such that
$\mc X\tensor_R k''=X$, a $\m_{\ell^n}$-gerbe $\mf X\to\mc X$ such that $\mf X\tensor_R K=\ms X_n$, and a locally free
$\mf X\times \mc M$-twisted sheaf $\mf E$ such that $\mf E\tensor_R k''=\ms E$.  For any inclusion $R\inj\C$, the base
changed family $(\mf X_{\C},\mc M_{\C}, \mf E_{\C})$ yields a twisted Fourier-Mukai partnership
\begin{displaymath}
(\mc X_{\C},[\mf X_{\C}])\sim \mc M_{\C},
\end{displaymath}
where $[\mf X_{\C}]\in\Br(\mc X_{\C})$ is a Brauer class of exact order $\ell^n$.
\end{enumerate}
\end{enumerate}
\end{prop}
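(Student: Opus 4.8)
The plan is to realize each $M_n$ as a moduli space of $\ms X_n$-twisted sheaves on $X_{k'}$, to recover $\ms X_n$ as a moduli space of sheaves on $M_n$, and then to spread this reconstruction out over the lift $\mc M/R$ and base-change to $\C$. First I would set $\gamma_n$ to be the reduction of $\gamma$ modulo $\ell^n$. Because $\gamma^2=-D^2$ and $\ell\nmid D^2$, the class $\gamma$ is primitive; and since $\gamma$ is $\phi'$-invariant, orthogonal to $\Pic(X_{k'})$, and $\phi'$ acts semisimply and preserves the pairing on $\Het^2(X_{\ol k},\Z_\ell(1))$, an orbit-averaging argument shows $\gamma$ is orthogonal to all of $\Pic(X_{\ol k})$, so in fact $\gamma\in T(X_{\ol k},\Z_\ell)$ and is primitive there as well. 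Lemma~\ref{L:computation of Br} over $\ol k$ then shows $\gamma/\ell^n$ has exact order $\ell^n$ in $\Br^B_\ell(X_{\ol k})$, which is (1).

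Next I would take the twisted Mukai vector $v_n=(\ell^n,D+\gamma,0)\in\Chow^{\gamma/\ell^n}(X_{k'},\Z)$, legitimate by Lemma~\ref{L:integral twisted Chow}. It is primitive, has rank $\ell^n$, and satisfies $v_n^2=(D+\gamma)^2=D^2+2D\cdot\gamma+\gamma^2=0$ because $\gamma\perp\Pic(X)$ and $\gamma^2=-D^2$. Proposition~\ref{P:moduli summary} then yields the $\m_{\ell^n}$-gerbe $\ms M_{\ms X_n}(v_n)$ over a K3 surface $M_n:=M_{\ms X_n}(v_n)$ over $k'$, and, taking $u=(0,D,0)$ with $u\cdot v_n=D^2$ prime to $\ell$, part (3) of that proposition trivializes the gerbe. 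This gives (3) (the determinant being $D$, the $\Pic$-component of $v_n$) together with a Fourier--Mukai equivalence $\D^{\tw}(\ms X_n)\simto\D(M_n)$ defined over $k'$.

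To get (2), this equivalence induces a $\phi'$-equivariant isometry of $\ell$-adic Mukai lattices carrying $\Chow^{\gamma/\ell^n}(X,\Z_\ell)$ onto $\Chow(M_n,\Z_\ell)$, hence an isometry of orthogonal complements; as cup product with $e^{\gamma/\ell^n}$ is a rational isometry taking $T(X_{k'},\Q_\ell)$ onto $\Chow^{\gamma/\ell^n}(X,\Q_\ell)^\perp$, this forces $\rk T(M_n,\Z_\ell)=\rk T(X_{k'},\Z_\ell)$ and so $\rho(M_n)=\rho(X_{k'})$. For (4a), under the equivalence a twisted point sheaf on $\ms X_n$ (of twisted Mukai vector $(0,0,1)$) goes to a simple locally free sheaf on $M_n$ of a fixed classical, primitive, isotropic Mukai vector $w_n$; the resulting morphism $\ms X_n\to\Sh_{M_n}(w_n)$ is étale (the equivalence matches all $\Ext$-groups) and a monomorphism, hence, since the target has relative dimension $2$ by Lemma~\ref{L:relative moduli}, an open immersion.

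The remaining point (4b) is the crux. Given $R$ and $\mc M$ with $\Pic(\mc M)=\Pic(M_n)$ (which keeps $w_n$ algebraic across the family), I would form the relative moduli space of stable sheaves on $\mc M$ with Mukai vector $w_n$; by the relative theory used in the proof of Proposition~\ref{P:moduli summary} this is a relative K3 surface $\mc X\to\spec R$ lifting $X$, inside which $\ms X_n$ sits, via Lemma~\ref{L:relative moduli}, as the $\m_{\ell^n}$-gerbe $\mf X\to\mc X$ over the locally free locus. Let $\mf E$ be the restriction of the universal sheaf, a locally free $\mf X$-twisted sheaf lifting $\ms E$. For any $R\inj\C$, the kernel $\mf E_\C$ produces the partnership $(\mc X_\C,[\mf X_\C])\sim\mc M_\C$: the locus in $\spec R$ where the adjunction maps \eqref{E:e} and \eqref{E:e2} are quasi-isomorphisms is open and contains the closed point, hence is all of $\spec R$, exactly as in Proposition~\ref{P:moduli summary}; and $[\mf X_\C]$, of order dividing $\ell^n$, specializes to $[\gamma_n]$ of exact order $\ell^n$, so has order exactly $\ell^n$. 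The hard part is this last step: ensuring that the lifted moduli component is genuinely proper over $R$ (a relative K3, not merely a smooth algebraic space), that the universal sheaf restricts to a locally free twisted sheaf lifting $\ms E$, and that the equivalence persists to $\C$ --- all of which rest on the relative theory of moduli of twisted sheaves on K3 surfaces and on the properness of the relevant moduli spaces.
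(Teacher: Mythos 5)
Your handling of parts (1)--(3) and (4a) is essentially correct and runs parallel to the paper's proof: you use the same twisted Mukai vector $v_n=(\ell^n,\gamma+D,0)$, the same numerical trivialization of the gerbe over the moduli space via Proposition~\ref{P:moduli summary}(3) (your $u=(0,D,0)$ with $u\cdot v_n=D^2$ versus the paper's $u=(\ell^n,\gamma,0)$ with $u\cdot v_n=-D^2$ --- the same computation), and the same isometry of Mukai lattices induced by the universal sheaf to get (2). For (1), your orbit-averaging argument showing $\gamma$ is a primitive element of $T(X_{\ol{k}},\Z_\ell)$ is a legitimate variant of the paper's route, which instead shows the square relating $\Pic\otimes\Z_\ell$ and $\H^2(\cdot,\Z_\ell(1))$ over $k'$ and $\ol{k}$ is Cartesian; both rest on $\Pic(X_{k'})=\Pic(X_{\ol{k}})^{\phi'}$, i.e.\ vanishing of the Brauer obstruction over a finite field.

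However, (4b) --- which you correctly identify as the crux --- has a genuine gap, and the mechanism you propose to close it would fail. You assert that the relative moduli space of sheaves on $\mc M$ with Mukai vector $w_n$ ``is a relative K3 surface $\mc X\to\spec R$ lifting $X$'' by appeal to the relative theory in the proof of Proposition~\ref{P:moduli summary}, and you say the properness needed rests on ``the properness of the relevant moduli spaces.'' But the properness invoked there (Lemma 2.3.3.2 of \cite{L1}) applies to moduli of \emph{twisted} sheaves whose rank equals the order of the Brauer class, where every semistable object is stable; on the untwisted side, $\Sh_{\mc M/R}(w_n)$ parametrizes \emph{simple locally free} sheaves of rank $\ell^n$ and is only a $\m_{\ell^n}$-gerbe over a \emph{smooth} algebraic space of relative dimension $2$ (Lemma~\ref{L:relative moduli}) --- it is not proper, there may be strictly semistable or non-locally-free degenerations, and no component of it can simply be declared a relative K3. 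The paper's actual argument, which is absent from your proposal, is a formal algebraization step: pass to the formal completion $\widehat{\Sh}_{\mc M/R}(w_n)$ along the special fiber, take the open formal substack $\ms Z$ determined by the open immersion $\ms X_n\inj\Sh_{M_n}(w_n)$ from (4a), observe that the determinant $\ms U$ of the universal sheaf pulls back to $D$ on $X_{k'}$ and hence furnishes a \emph{formal lift of the ample class}, and then algebraize $\ms Z\to\mf Z$ by Grothendieck existence to produce the gerbe $\mf X\to\mc X\to\spec R$ with $\mc X$ a relative K3 and $\mf E$ the restriction of the universal sheaf. Without this polarization-plus-algebraization argument, the triple $(\mc X,\mf X,\mf E)$ is not constructed. (Your remaining steps are fine once the family exists: propagating the Fourier--Mukai equivalence by checking the adjunction maps \eqref{E:e} and \eqref{E:e2} on the closed fiber is exactly the paper's Nakayama argument, and your specialization argument for $[\mf X_{\C}]$ having exact order $\ell^n$ is correct.)
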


\begin{proof}
Consider the Mukai vector
\begin{displaymath}
v_n:=(\ell^n,\gamma+D,0)\in\Chow^{\gamma/\ell^n}(X_{k'},\Z).
\end{displaymath}
We have that
$$v_n^2=\gamma^2+D^2=2q^2+D^2=0$$
and
$$v_n\cdot(\ell^n,\gamma,0)=\gamma^2=2q^2=-D^2\not\equiv 0\mod\ell.$$
It is easy to see that $\gamma$ is primitive.  Consider the diagram
$$\xymatrix{0\ar[r] & \Pic(X_{k'})\tensor\Z_\ell\ar[r]\ar[d] & \H^2(X_{k'},\Z_\ell(1))\ar[r]\ar[d] & T_\ell\Br(X_{k'})\ar[r]\ar[d] & 0\\
0\ar[r] & \Pic(X_{\widebar k})\tensor\Z_\ell\ar[r] & \H^2(X_{\widebar k},\Z_\ell(1))\ar[r] & T_\ell\Br(X_{\widebar k})\ar[r] & 0,
}$$
where $T_\ell$ denotes ``$\ell$-adic Tate module''.

We claim that the left square is Cartesian.  Indeed, it follows from the Hochschild-Serre spectral sequence that the middle vertical arrow identifies the top group with the Frobenius-invariants in the bottom group.  Similarly, since the Brauer obstruction to the existence of invertible sheaves on proper varieties over finite fields is trivial, the left vertical arrow is also the group invariants.  But the invariants in a submodule are just the intersection with the ambient invariants, showing that the square is Cartesian.  It follows that for each $n$, the image of $\gamma_n$ in $\Br(X_{\widebar k})$ has order exactly $\ell^n$, as claimed.

Let $\ms X_n$ be a gerbe representing $-\gamma_n$ (see Notation \ref{N:n} for the definition of $\gamma_n$ in terms of $\gamma$).  Applying Proposition \ref{P:moduli summary}, we have that $\ms M_{\ms X_n}(v_n)$ is a $\m_{\ell^n}$-gerbe over a K3 surface $M_n$ whose Brauer class is killed by $$v_n\cdot(\ell^n,\gamma,0)=-D^2,$$ hence is trivial.  This yields an equivalence
$$\D^{\tw}(\ms X_n)\simto\D(M_n).$$
Finally, the universal sheaf $\ms E$ on $\ms X_n\times\ms M_{\ms X_n}(v_n)$ induces an isometry
$$\Het(X_{k'},\Q_\ell)\simto\Het(M_n,\Q_\ell)$$
that restricts to an isometry
$$\Chow(X_{k'},\Q_\ell)\simto\Chow(M_n,\Q_\ell)$$
(see Section 4.1.1 of \cite{Y} for the proof, written using Yoshioka's notation),
establishing the first three statements of the Proposition.

Let $\ms E$ be a tautological sheaf on $\ms X_n\times M_n$.  The determinant of $\ms E$ is naturally identified with the pullback of an invertible sheaf $$L_{X_n}\boxtimes L_{M_n}\in\Pic(X_n\times M_n),$$ and each geometric fiber $\ms E_{x}$ has a second Chern class $c\in\Z$.  Letting $$w_n=(\ell^n,L_{M_n},c),$$ the sheaf $\ms E$ gives a morphism of $\m_{\ell^n}$-gerbes
$$\ms X_n\to \Sh_{M_n}(w_n).$$
Since $\ms E$ is a Fourier-Mukai kernel, this morphism is an \'etale monomorphism (see e.g.\ Section 4 of \cite{LO}), so it is an open immersion.

To prove the last part, we may replace $k$ with $k''$ and ignore the residue field extensions. Fix a lift $\mc M$ of $M_n$ over $R$ over which all of $\Pic(M_n)$ lifts (by assumption). Since $\Pic(M_n)$ lifts, so does $w_n$ and $\Sh_{\mc M/R}(w_n)$ is a $\m_{\ell^n}$-gerbe over a smooth algebraic space $\mSh_{\mc M/R}(w_n)$ over $R$.  Write $\ms V$ for the universal sheaf on $\Sh_{\mc M/R}(w_n)\times_R\mc M$.  We can write $$\det\ms V=\ms U\boxtimes L_{M_n}$$ with $$\ms U\in\Pic(\mSh_{\mc M/R}(w_n)),$$ and by assumption the pullback of $\ms U$ along the map
$$X\to\mSh_{M_n}(w_n)\inj\mSh_{\mc M/R}(w_n)$$
is isomorphic to $D$, hence is ample (by the assumption on $D$).  Since $\ms X_n$ is open in $\Sh_{M_n}(w_n)$, the induced open formal substack $$\ms Z\subset\widehat{\Sh}_{\mc M/R}(w_n)\to\spf R$$ is a $\m_{\ell^n}$-gerbe over a formal deformation $\mf Z$ of $X$ over $\spf R$.  The determinant of the universal formal sheaf gives a formal lift of $D$ over $\mf Z$, hence a polarization.  It follows that $\ms Z\to\mf Z$ is algebraizable, giving a $\m_{\ell^n}$-gerbe over a relative K3 surface $$\ms G\to\mc X\to\spec R$$ and an open immersion
$$\ms G\inj\Sh_{\mc M/R}(w_n)$$
extending $$\ms X_n\inj\Sh_{M_n}(w_n).$$  Restricting the universal sheaf gives the desired deformation $\mf E$ of $\ms E$.  Nakayama's lemma shows that $\mf E$ is the kernel of a relative Fourier-Mukai equivalence (i.e., the adjunction maps \eqref{E:e} and \eqref{E:e2} are quasi-isomorphisms bceause their derived restrictions to the fiber are quasi-isomorphisms).  Foundational details are contained in Section 3 of \cite{LO}, and a similar deformation argument is contained in Section 6 of \cite{LO}.

\end{proof}

\subsection{Proof of Main Theorem (2)}

Fix a finite field $k$ of characteristic $p$, and let $k'$ be the quadratic extension of $k$.  We now prove the
second part of the main theorem from the introduction.

\begin{thm}\label{T:finiteness implies tate}
Suppose $p \ge 5$.
If there are only finitely many K3 surfaces over $k'$ then for any K3 surface $X$ over $k$,
the Brauer group $X_{k'}$ is finite, i.e., the Tate conjecture holds for $X_{k'}$.
\end{thm}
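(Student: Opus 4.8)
The plan is to argue by contradiction. Assume there are only finitely many K3 surfaces over $k'$, suppose toward a contradiction that some K3 surface $X$ over $k$ has $\Br(X_{k'})$ infinite, and use the machinery of the previous sections to manufacture an impossible family of twisted Fourier--Mukai partners in characteristic zero. First I would observe that, by Proposition~\ref{P:B-fields explain it}, the assumption $\Br(X_{k'})$ infinite forces $T(X_{k'},\Z_\ell)\neq 0$ for every $\ell$, so the hypotheses set up just before Proposition~\ref{P:partner blob} can be arranged: fix a primitive ample divisor $D\in\Pic(X)$ and, using Lemma~\ref{L:non-zero square}, choose a suitable odd prime $\ell$ prime to $\disc\Pic(X)$ and $D^2$ together with an element $\gamma\in T(X_{k'},\Z_\ell)$ satisfying $\gamma^2=-D^2$. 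Feeding this data into Proposition~\ref{P:partner blob} produces an infinite sequence of pairs $(\gamma_n,M_n)$, where each $M_n$ is a K3 surface over $k'$ and the associated Brauer class $[\gamma_n]\in\Br(X_{\ol k})$ has exact order $\ell^n$.

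Next I would bring in the finiteness hypothesis. Since there are only finitely many K3 surfaces over $k'$ up to isomorphism, the pigeonhole principle forces infinitely many of the $M_n$ to be isomorphic to a single K3 surface $M$ over $k'$; I pass to that subsequence. Fix once and for all a lift $\mc M$ of $M_n\cong M$ to a complete discrete valuation ring $R$ of mixed characteristic with residue field $k'$ such that $\Pic(\mc M)=\Pic(M)$; such a lift exists by \cite{LieblichMaulik}. Fix also an embedding $R\inj\C$. For each $n$ in the subsequence, part~(4b) of Proposition~\ref{P:partner blob}, applied with this fixed $\mc M$, produces a lift $\mc X_n$ of $X_{k'}$, a $\m_{\ell^n}$-gerbe $\mf X_n\to\mc X_n$, and a twisted Fourier--Mukai partnership
\begin{displaymath}
(\mc X_{n,\C},[\mf X_{n,\C}])\sim\mc M_\C
\end{displaymath}
in which $[\mf X_{n,\C}]\in\Br(\mc X_{n,\C})$ has exact order $\ell^n$, while $\mc M_\C$ is a single fixed complex K3 surface independent of $n$.

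The crux of the argument, and the step I expect to be the main obstacle, is the characteristic-zero input. Over $\C$ a twisted derived equivalence of K3 surfaces induces a Hodge isometry of twisted transcendental lattices (see \cite{LO} and the references therein); applied to the partnership above, it yields an isometry $T(\mc X_{n,\C},[\mf X_{n,\C}])\cong T(\mc M_\C)$, where $T(\mc X_{n,\C},[\mf X_{n,\C}])$ denotes the twisted transcendental lattice. Because $[\mf X_{n,\C}]$ has exact order $\ell^n$, this twisted transcendental lattice is exactly the kernel of a surjection $T(\mc X_{n,\C})\to\Z/\ell^n$, hence a full-rank sublattice of index $\ell^n$ in $T(\mc X_{n,\C})$. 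Since discriminant scales by the square of the index, we obtain
\begin{displaymath}
\bigl|\disc T(\mc M_\C)\bigr|=\bigl|\disc T(\mc X_{n,\C},[\mf X_{n,\C}])\bigr|=\ell^{2n}\,\bigl|\disc T(\mc X_{n,\C})\bigr|\geq\ell^{2n},
\end{displaymath}
the last inequality because $\disc T(\mc X_{n,\C})$ is a nonzero integer (a complex K3 surface has Picard rank at most $20$, so its transcendental lattice is nondegenerate of positive rank). But $\disc T(\mc M_\C)=\pm\disc\Pic(\mc M_\C)$ is a single fixed nonzero integer, attached to the one complex K3 surface $\mc M_\C$. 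This is incompatible with $|\disc T(\mc M_\C)|\geq\ell^{2n}$ for $n$ large, giving the desired contradiction. Therefore $\Br(X_{k'})$ is finite, which is equivalent to the Tate conjecture for $X_{k'}$.

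The delicate points to pin down are: that the lift $\mc M$ really can be chosen with $\Pic(\mc M)=\Pic(M)$, so that $\disc T(\mc M_\C)$ is genuinely independent of $n$; that the exact order $\ell^n$ of the Brauer class survives the lift to characteristic zero, which is precisely the content guaranteed by Proposition~\ref{P:partner blob}(4b); and above all the precise form of the twisted derived Torelli statement over $\C$, including the index-$\ell^n$ relationship between $T(\mc X_{n,\C},[\mf X_{n,\C}])$ and $T(\mc X_{n,\C})$, where essentially all the weight of the twisted-K3 machinery is concentrated.
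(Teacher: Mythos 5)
Your proposal is correct and tracks the paper's proof almost step for step --- same contradiction setup, same use of Lemma~\ref{L:non-zero square} and Proposition~\ref{P:partner blob}, same pigeonhole reduction to a single surface $M$, same Picard-preserving lift $\mc M$ and passage to $\C$ via part (4)(b) --- but it diverges at the final step. The paper simply observes that $\mc M_{\C}$ acquires infinitely many pairwise inequivalent twisted Fourier--Mukai partners (the orders $\ell^{n_i}$ are distinct) and cites the finiteness of twisted partners, Corollary 4.6 of \cite{HS}. You instead inline a proof of the special case you need: the Huybrechts--Stellari Hodge isometry of generalized transcendental lattices $T(\mc X_{n,\C},[\mf X_{n,\C}])\cong T(\mc M_{\C})$, the identification of the twisted transcendental lattice with the index-$\ell^n$ kernel of the surjection $T(\mc X_{n,\C})\to\Z/\ell^n$ attached to a Brauer class of exact order $\ell^n$, and the scaling $\disc = \ell^{2n}\cdot(\text{nonzero integer})$, contradicting the fixed discriminant of $T(\mc M_{\C})$. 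This is sound (and is essentially how the finiteness in \cite{HS} is proved), and it buys something the paper's citation obscures: a quantitative statement that the order of the Brauer class of any twisted partner of a fixed complex K3 surface is bounded by its transcendental discriminant, so you need only the isometry result from \cite{HS}, not the finiteness corollary. Note, though, that your citation for the Hodge isometry should be \cite{HS} itself rather than \cite{LO} (the latter concerns positive characteristic and is where the relative Fourier--Mukai foundations live). Two small points to tighten: the existence of a lift $\mc M$ with $\Pic(\mc M)=\Pic(M)$ via \cite{LieblichMaulik} requires $M$ not to be Shioda-supersingular --- the paper explicitly derives this from the infinitude of $\Br(M)$, guaranteed by Proposition~\ref{P:partner blob}(2), before invoking Propositions 3.1 and 4.1 of \cite{LieblichMaulik}, and you should do the same; and the residue field of $R$ may only be finite over $k'$ rather than equal to it, which is harmless for the argument but worth stating as the paper does.
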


\begin{proof}
Suppose that $X$ is a K3 surface over $k$ such that $\Br(X_{k'})$ is infinite.  We know by
Proposition~\ref{P:partner blob} that there is an infinite sequence of Brauer classes $\gamma_n\in\Br(X_{k'})$ such that
\begin{enumerate}
\item $\gamma_n$ has exact order $\ell^n$ over $\widebar {k}$, and
 \item the twisted K3 surface $(X_{k'},\gamma_n)$ is Fourier-Mukai equivalent to a K3 surface $M_n$ over $k'$.
\end{enumerate}
If there are only finitely many K3 surfaces over $k'$, there is a subsequence $n_i$ such that all of the $M_{n_i}$ are
the same surface, say $M$.  Since $M$ has infinite Brauer group (by Proposition \ref{P:partner blob}(2)), it is 
not Shioda-supersingular. Thus, $M$ has Picard number at most $4$ by Theorem 1.7 of \cite{A} (see \cite[\S 3]{LieblichMaulik}). Proposition 3.1 and Corollary 4.2 of \cite{LieblichMaulik} show that 
\begin{enumerate}
\item $M$ admits a deformation over $k'[\![t]\!]$ over which all of $\Pic(M)$ deforms, whose generic fiber has finite height;
\item any finite height K3 surface admits a lift to characteristic $0$ over which its entire Picard group deforms.
\end{enumerate}
Suppose $L\in\Pic(M)$ is ample of degree $2d$. Let $\ms D$ denote the finite-type DM stack parametrizing pairs $(X,\Lambda)$ with $\Lambda$ ample of degree $2d$, so that $(M,L)$ is a $k'$-point of $\ms D$. There is relative scheme $\ms I\to\ms D$ locally of finite type defined by sending a pair $(X,\Lambda)$ to $\operatorname{Isom}(N,\Pic(X))$, where $N=\Pic(M)$ and the isomorphisms are lattice isomorphisms.  

Let $\ms O$ be the strict local ring of the object $(M,\id)$ in $\ms I$; since $\ms I$ is locally of finite type over $W(k')$, the ring $\ms O$ is the Henselization of a ring essentially of finite type over $W(k')$ and is thus Noetherian. Moreover, the two statements above imply that $\ms O$ has a point of characteristic $0$ (that necessarily specializes to the unique closed point). It follows that there exists a lift $\mc M$ of $M$ over a complete dvr $R$ with residue field algebraic over 
$k'$ such that $\Pic(\mc M)=\Pic(M)$.  Since the residue field is algebraic over $k'$, there is an embedding $R\inj\C$. Applying Proposition \ref{P:partner blob}(4)(b) we end up with a
complex K3 surface $\mc M_{\C}$ with a sequence of twisted derived partners $(X_i,\eta_i)$ with $\eta_i\in\Br(X_i)$
of exact order $\ell^{n_i}$ for a strictly increasing sequence $n_i$.  In particular, $\mc M_{\C}$ has infinitely many
twisted partners.  But this is a contradiction by Corollary 4.6 of \cite{HS}.
\end{proof}

\hskip\baselineskip


\begin{thebibliography}{[Cas]}

\bibitem[A]{A}
M.~Artin,
\emph{Supersingular K3 surfaces},
Ann. Sci. \'Ecole Norm. Sup. (4) \textbf{7} (1974), 543--567 (1975).


\bibitem[C\u al]{C}
A.~C\u ald\u araru,
\emph{Nonfine moduli spaces of sheaves on K3 surfaces},
Int. Math. Res. Not. 2002, no. 20, 1027--1056.

\bibitem[Cas]{Cassels}
J.~W.~S.~Cassels, \emph{Rational quadratic forms},
London Mathematical Society Monographs, 13, Academic Press, Inc., 1978.

\bibitem[D1]{D}
P.~Deligne,
\emph{La conjecture de Weil pour les surfaces K3},
Invent. Math. \textbf{15} (1972), 206--226.

\bibitem[D2]{D2}
P.~Deligne,
\emph{Rel\'evement des surfaces K3 en caract\'eristique nulle},
Lecture Notes in Math.\ 868 (1986), 58--71.


\bibitem[HS]{HS}
D.~Huybrechts and P.~Stellari,
\emph{Equivalences of twisted K3 surfaces},
Math. Ann. 332 (2005), no. 4, 901--936.

\bibitem[LW]{LW}
S.~Lang and A.~Weil, \emph{Number of points of varieties in finite fields}, Amer. J. Math., 76:819--827,
1954

\bibitem[L1]{L0}
M.~Lieblich,
\emph{Moduli of orbifold twisted sheaves},
Adv. Math. {\bf 226} (2011), no.~5, 4145--4182.

\bibitem[L2]{L1}
M.~Lieblich,
\emph{Moduli of twisted sheaves},
Duke Math. J. \textbf{138} (2007), no. 1, 23--118.

\bibitem[L3]{L2}
M.~Lieblich,
\emph{Twisted sheaves and the period-index problem},
Compos. Math. \textbf{144} (2008), no. 1, 1--31.

\bibitem[LM]{LieblichMaulik}
M.~Lieblich and D.~Maulik, \emph{A note on the cone conjecture for K3 surfaces in positive characteristic},
preprint, 2011, arXiv:1102.3377.

\bibitem[LO]{LO}
M.~Lieblich and M.~Olsson,
\emph{Derived equivalence of K3 surfaces in positive characteristic},
in preparation.

\bibitem[LLR]{LLR}
Q.~Liu, D.~Lorenzini, and M.~Raynaud,
\emph{On the Brauer group of a surface},
Invent. Math. \textbf{159} (2005), no. 3, 673--676.

\bibitem[Mi]{milne}
J.~S.~Milne, \emph{Abelian varieties}, v2.00, 2008, available at \url{www.jmilne.org/math/}.

\bibitem[Mu]{M}
S.~Mukai,
\emph{On the moduli space of bundles on K3 surfaces. I.},
Vector bundles on algebraic varieties (Bombay, 1984), 341--413,
Tata Inst. Fund. Res. Stud. Math., 11, Tata Inst. Fund. Res., Bombay, 1987.


\bibitem[NO]{NO}
N.~Nygaard and A.~Ogus,
\emph{Tate's conjecture for K3 surfaces of finite height},
Ann. of Math. (2) \textbf{122} (1985), no. 3, 461--507

\bibitem[O]{Ogus}
A.~Ogus, \emph{A Crystalline Torelli Theorem for Supersingular K3 Surfaces}, in ``Arithmetic and Geometry,''
Progress in Mathematics \textbf{36}, Birkh\"auser 1983.

\bibitem[O2]{Ogus2}
A.~Ogus, \emph{$F$-isocrystals and de Rham cohomology. II. Convergent isocrystals.},
Duke Math.\ J.\ {\bf 51} (1984), no.~4, 765--850.

\bibitem[S]{Serre}
J.-P.\ Serre, \emph{A course in arithmetic},
Graduate Texts in Mathematics, No.~7. Springer--Verlag, 1973.

\bibitem[SD]{saint-donat}
B.~Saint-Donat, \emph{Projective models of K3 surfaces}, Amer.\ J.~Math.\ {\bf 96} (1974), 602--639.

\bibitem[T]{Totaro}
B.~Totaro, \emph{Algebraic surfaces and hyperbolic geometry}, preprint, 2010, arXiv:1008.3825.

\bibitem[Y]{Y}
K.~Yoshioka,
\emph{Moduli spaces of twisted sheaves on a projective variety}, Moduli spaces and arithmetic geometry, 1--30,
Adv. Stud. Pure Math., 45, Math. Soc. Japan, Tokyo, 2006.

\bibitem[Z]{zarhin}
Ju.~.G.~Zarhin,
\emph{Endomorphisms of abelian varieties and points of finite order in characteristic P} (Russian),
Mat. Zametki {\bf 21} (1977), no. 6, 737--744.
\end{thebibliography}
\end{document}